\newtheorem{rk}{Remark}[section]
\newtheorem{ass}{Assumption}[section]
\newtheorem{theo}{Theorem}[section]
\newtheorem{lem}{Lemma}[section]
\newtheorem{cor}{Corollary}[section]
\newcommand{\Acal}{\mathcal{A}}
\newcommand{\Bcal}{\mathcal{B}}
\newcommand{\Dcal}{\mathcal{D}}
\newcommand{\Fcal}{\mathcal{F}}
\newcommand{\Gcal}{\mathcal{G}}
\newcommand{\Kcal}{\mathcal{K}}
\newcommand{\Ncal}{\mathcal{N}}
\newcommand{\Tcal}{\mathcal{T}}
\newcommand{\Wcal}{\mathcal{W}}
\newcommand{\Xcal}{\mathcal{X}}
\newcommand{\Rset}{\mathbb{R}}
\newcommand{\Trans}{\scriptscriptstyle\top}
\newcommand{\ri}{\operatorname{ri}}
\newcommand{\inter}{\operatorname{int}}
\DeclareMathOperator*{\minimise}{minimise}
\DeclareMathOperator*{\argmin}{argmin}
\DeclareMathOperator*{\suprem}{sup}
\titlerunning{An Alternating Trust Region Algorithm for Distributed Linearly Constrained NLPs}
\title{An Alternating Trust Region Algorithm for Distributed Linearly Constrained Nonlinear Programs}
\author{Jean-Hubert Hours and Colin N. Jones}
\institute{	
			\mailname{ Jean-Hubert Hours}\ \at
               	Automatic Control Laboratory,~Ecole Polytechnique F\'ed\'erale de Lausanne,\ \email{jean-hubert.hours@epfl.ch}
           		\and Colin N. Jones\ \at Automatic Control Laboratory,~Ecole Polytechnique F\'ed\'erale de Lausanne
}
\date{Received: date / Accepted: date}
\begin{document}
\maketitle
\begin{abstract}
A novel trust region method for solving linearly constrained nonlinear programs is presented.~The proposed technique is amenable to a distributed implementation, as its salient ingredient is an alternating projected gradient sweep in place of the~Cauchy point computation.~It is proven that the algorithm yields a sequence that globally converges to a critical point.~As a result of some changes to the standard trust region method, namely a proximal regularisation of the trust region subproblem, it is shown that the local convergence rate is linear with an arbitrarily small ratio.~Thus, convergence is locally almost superlinear, under standard regularity assumptions.~The proposed method is successfully applied to compute local solutions to alternating current optimal power flow problems in transmission and distribution networks.~Moreover, the new mechanism for computing a~Cauchy point compares favourably against the standard projected search as for its activity detection properties.
\end{abstract}
\keywords{Nonconvex optimisation, Distributed optimisation, Coordinate gradient descent, Trust region methods}
\subclassname{$49$M$27$~$\cdot$~$49$M$37$~$\cdot$~$65$K$05$~$\cdot$~$65$K$10$~$\cdot$~$90$C$06$~$\cdot$\\
$90$C$26$~$\cdot$~$90$C$30$}

\thanks{The research leading to these results has received funding from the European Research Council under the European Union's Seventh Framework Programme (FP/$2007$-$2014$)/ ERC Grant Agreement n.$307608$.}
\section{Introduction}
\label{sec:intro}
\looseness-1Minimising a separable smooth nonconvex function subject to partially separable coupling equality constraints and separable constraints, appears in many engineering problems such as Distributed Nonlinear Model Predictive Control (DNMPC)\ \citep{neco2009}, power systems\ \citep{baldick1997} and wireless networking\ \citep{chiang2007}.~For such problems involving a large number of agents, which result in large-scale nonconvex Nonlinear Programs (NLP), it may be desirable to perform computations in a distributed manner, meaning that all operations are not carried out on one single node, but on multiple nodes spread over a network and that information is exchanged during the optimisation process.~Such a strategy may prove useful to reduce the computational burden in the case of extremely large-scale problems.~Moreover, autonomy of the agents may be hampered by a purely centralised algorithm.~Case in points are cooperative tracking using DNMPC\ \citep{hours2016} or the Optimal Power Flow problem (OPF) over a distribution network\ \citep{topcu2014}, into which generating entities may be plugged or unplugged.~Moreover, it has been shown in a number of studies that distributing and parallelising computations can lead to significant speed-up in solving large-scale NLPs\ \citep{zav2008}.~Splitting operations can be done on distributed memory parallel environments such as clusters\ \citep{zav2008}, or on parallel computing architectures such as Graphical Processing Units (GPU)\ \citep{fei2014}.

\looseness-1Our objective is to develop nonlinear programming methods in which most of the computations can be distributed or parallelised.~Some of the key features of a distributed optimisation strategy are the following:
\begin{enumerate}[label=(\roman*)]  
\item \textit{Shared memory}.~Vectors and matrices involved in the optimisation process are stored on different nodes.~This requirement rules out direct linear algebra methods, which require the assembly  of matrices on a central unit.
\item \textit{Concurrency}.~A high level of parallelism is obtained at every iteration.
\item \textit{Cheap exchange}.~Global communications of agents with a central node are cheap (scalars).~More costly communications (vectors) remain local between neighbouring agents.~In general, the amount of communication should be kept as low as possible.~It is already clear that globalisation strategies based on line-search do not fit with the distributed framework\ \citep{fei2014}, as these entail evaluating a `central' merit function multiple times per iteration, thus significantly increasing communications.     
\item \textit{Inexactness}.~Convergence is `robust' to inexact solutions of the subproblems, since it may be necessary to truncate the number of sub-iterations due to communication costs.  
\item \textit{Fast convergence}.~The sequence of iterates converges at a fast (at least linear) local rate.~Slow convergence generally results in a prohibitively high number of communications.
\end{enumerate}
As we are interested in applications such as DNMPC, which require solving distributed parametric NLPs with a low latency\ \citep{hours2016}, a desirable feature of our algorithm should also be
\begin{enumerate}[label=(\roman*)]
 \setcounter{enumi}{5}
\item \textit{Warm-start and activity detection}.~The algorithm detects the optimal active-set quickly and enables warm-starting. 
\end{enumerate}
Whereas a fair number of well-established algorithms exist for solving distributed convex NLPs\ \citep{bert1997}, there is, as yet, no consensus around a set of practical methods applicable to distributed nonconvex programs.~Some work\ \citep{zav2008} exists on the parallelisation of linear algebra operations involved in solving nonconvex NLPs with\ \textsc{ipopt}\ \citep{waech2006}, but the approach is limited to very specific problem structures and the globalisation phase of\ \textsc{ipopt} (filter line-search) is not suitable for fully distributed implementations (requirements (iii), (iv) and (vi) are not met).~Among existing strategies capable of addressing a broader class of distributed nonconvex programs, one can make a clear distinction between Sequential Convex Programming (SCP) approaches and augmented Lagrangian techniques.

\looseness-1An SCP method consists in iteratively solving distributed convex NLPs, which are local approximations of the original nonconvex NLP.~To date, some of the most efficient algorithms for solving distributed convex NLPs combine dual decomposition with smoothing techniques\ \citep{neco2009,quoc2013b}.~On the contrary, an augmented Lagrangian method aims at decomposing a nonconvex auxiliary problem inside an augmented Lagrangian loop\ \citep{cohen1980,ham2011,hours2014b}. While convergence guarantees can be derived in both frameworks, computational drawbacks also exist on both sides.~For instance, it is not clear how to preserve the convergence properties of~SCP schemes when every subproblem is solved to a low level of accuracy.~Hence, (iv) is not satisfied immediately.~Nevertheless, for some recent work in this direction, one may refer to\ \citep{quoc2013a}.~The convergence rate of the algorithm analysed in\ \citep{quoc2013a} is at best linear, thus not fulfilling (v).~On the contrary, the inexactness issue can be easily handled inside an augmented Lagrangian algorithm, as global and fast local convergence is guaranteed even though the subproblems are not solved to a high level of accuracy\ \citep{solodov2012,conn1991}.~However, in practice, poor initial estimates of the dual variables can drive the iterative process to infeasible points.~Moreover, it is still not clear how the primal nonconvex subproblems should be decomposed and solved efficiently in a distributed context.~The quadratic penalty term of an augmented Lagrangian does not allow for the same level of parallelism as a (convex) dual decomposition.~Thus, requirement (ii) is not completely satisfied.~To address this issue, we have recently proposed applying Proximal Alternating Linearised Minimisations (PALM)\ \citep{bolte2013} to solve the auxiliary augmented Lagrangian subproblems\ \citep{hours2014b,hours2016}.~The resulting algorithm inherits the slow convergence properties of proximal gradient methods and does not readily allow one to apply a preconditioner.~In this paper, a novel mechanism for handling the augmented Lagrangian subproblems in a more efficient manner is proposed and analysed.~The central idea is to use alternating gradient projections to compute a~Cauchy point in a trust region~Newton method\ \citep{conn2000}.

\looseness-1When looking at practical trust region methods for solving bound-constrained problems\ \citep{zav2014}, one may notice that the safeguarded Conjugate Gradient~(sCG) algorithm is well-suited to distributed implementations, as the main computational tasks are structured matrix-vector and vector-vector multiplications, which do not require the assembly of a matrix on a central node.~Moreover, the global communications involved in an~sCG algorithm are cheap.~Thus,~sCG satisfies (i), (ii) and (iii).~The implementation of~CG on distributed architectures has been extensively explored\ \citep{verschoor2012,azevedo1993,fei2014}.~Furthermore, a trust region update requires only one centralized objective evaluation per iteration.~From a computational perspective, it is thus comparable to a dual update, which requires evaluating the constraints functional and is ubiquitous in distributed optimisation algorithms.~However, computing the~Cauchy point in a trust region loop is generally done by means of a projected line-search\ \citep{zav2014} or sequential search\ \citep{conn1988}.~Whereas it is broadly admitted that the~Cauchy point computation is cheap, this operation requires a significant amount of global communications in distributed memory parallel environments, and is thus hardly amenable to such applications\ \citep{fei2014}.~This hampers the implementability of trust region methods with good convergence guarantees on distributed computing platforms, whereas many parts of the algorithm are attractive for such implementations.~The aim of this paper is to bridge the gap by proposing a novel way of computing the~Cauchy point that is more tailored to the distributed framework.~Coordinate gradient descent methods such as~PALM, are known to be parallelisable for some partial separability structures\ \citep{bert1997}.~Moreover, in practice, the number of backtracking iterations necessary to select a block step-size, can be easily bounded, making the approach suitable for `Same Instruction Multiple Data' architectures.~Therefore, we propose using one sweep of block-coordinate gradient descent to compute a~Cauchy point.~As shown in paragraph\ \ref{subsec:activ} of Section\ \ref{sec:cv_ana}, such a strategy turns out to be efficient at identifying the optimal active-set.~It can then be accelerated by means of an inexact~Newton method.~As our algorithm differs from the usual trust region~Newton method, we provide a detailed convergence analysis in Section\ \ref{sec:cv_ana}.~Finally, one should mention a recent paper\ \citep{qi2014}, in which a trust region method is combined with alternating minimisations, namely the Alternating Directions Method of Multipliers (ADMM)\ \citep{bert1997}, but in a very different way from the strategy described next.
The contributions of the paper are the following:
\begin{itemize}
\item We propose a novel way of computing a~Cauchy point in a trust region framework, which is suitable for distributed implementations. 
\item We adapt the standard trust region algorithm to the proposed~Cauchy point computation.~Global convergence along with an almost~Q-superlinear local rate is proven under standard assumptions.
\item The proposed trust region algorithm, entitled\ \textsc{trap} (Trust Region with Alternating Projections), is used as a primal solver in an augmented Lagrangian dual loop, resulting in an algorithm that meets requirements (i)-(vi), and is applied to solve~OPF programs in a distributed fashion.
\end{itemize} 
\looseness-1In Section\ \ref{sec:back}, some basic notion in variational analysis is recalled.~In Section\ \ref{sec:algo}, our\ \textsc{trap} algorithm is presented.~Its convergence properties are analysed in Section\ \ref{sec:cv_ana}.~Global convergence to a critical point is guaranteed, as well as almost Q-superlinear local convergence.~Finally, the proposed algorithm is tested on OPF problems over transmission and distribution networks in Section\ \ref{sec:ac_pow}. 
\section{Background}
\label{sec:back}
Given a closed convex set~$\Omega$, the projection operator onto~$\Omega$ is denoted by~$P_\Omega$ and the indicator function of~$\Omega$ is defined by 
\begin{align}
\iota_{\Omega}\left(x\right)=\begin{cases}0\enspace, &\mbox{if } x\in\Omega\enspace,\\
+\infty\enspace,&\mbox{if }x\notin\Omega\enspace.\end{cases}\nonumber
\end{align}
We define the normal cone to~$\Omega$ at~$x\in\Omega$ as 
\begin{align}
\nonumber
\Ncal_\Omega\left(x\right):=\left\{v\in\Rset^d~:~\forall y\in\Omega,\left<v,y-x\right>\leq0\right\}\enspace. 
\end{align}
The tangent cone to~$\Omega$ at~$x$ is defined as the closure of feasible directions at~$x$\ \citep{rock2009}.~Both~$\Ncal_\Omega\left(x\right)$ and~$\Tcal_\Omega\left(x\right)$ are closed convex cones.~As~$\Omega$ is convex, for all~$x\in\Omega$,~$\Ncal_\Omega\left(x\right)$ and~$\Tcal_\Omega\left(x\right)$ are polar to each other~\citep{rock2009}.
\begin{theo}[Moreau's decomposition\ \citep{mor1962}]
Let~$\Kcal$ be a closed convex cone in~$\Rset^d$ and~$\Kcal^{\circ}$ its polar cone.~For all~$x,y,z\in\Rset^d$, the following two statements are equivalent:
\begin{enumerate}
\item $z=x+y$ with $x\in\Kcal$,~$y\in\Kcal^{\circ}$ and $\left<x,y\right>=0$.
\item $x=P_{\Kcal}\left(z\right)$ and~$y=P_{\Kcal^{\circ}}\left(z\right)$.
\end{enumerate}
\end{theo}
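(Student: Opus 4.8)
The plan is to prove Moreau's decomposition by establishing the two directions of the equivalence, with the crux being the characterization of the projection onto a closed convex cone via a variational inequality.

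\textbf{Direction (1) $\Rightarrow$ (2).} Suppose $z = x + y$ with $x \in \Kcal$, $y \in \Kcal^{\circ}$, and $\left<x,y\right> = 0$. I would show $x = P_{\Kcal}(z)$ by verifying the standard obtuse-angle criterion: $x \in \Kcal$ and $\left<z - x, w - x\right> \leq 0$ for all $w \in \Kcal$. Since $z - x = y$, this amounts to $\left<y, w - x\right> = \left<y, w\right> - \left<y, x\right> = \left<y, w\right> \leq 0$, which holds because $w \in \Kcal$ and $y \in \Kcal^{\circ}$ (using $\left<x,y\right> = 0$ to drop that term). By the well-known characterization of projections onto closed convex sets, this gives $x = P_{\Kcal}(z)$. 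The claim $y = P_{\Kcal^{\circ}}(z)$ follows symmetrically, after noting that since $\Kcal$ is a closed convex cone, $(\Kcal^{\circ})^{\circ} = \Kcal$, so the roles of $\Kcal$ and $\Kcal^{\circ}$ are interchangeable: we have $z = y + x$ with $y \in \Kcal^{\circ}$, $x \in (\Kcal^{\circ})^{\circ}$, and $\left<y,x\right> = 0$.

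\textbf{Direction (2) $\Rightarrow$ (1).} Suppose $x = P_{\Kcal}(z)$ and $y = P_{\Kcal^{\circ}}(z)$. Set $\tilde{x} = x$ and $\tilde{y} = z - x$; I want to show $\tilde{y} \in \Kcal^{\circ}$, $\left<\tilde{x}, \tilde{y}\right> = 0$, and $\tilde{y} = y$, which then collapses to statement (1). The projection characterization gives $\left<z - x, w - x\right> \leq 0$ for all $w \in \Kcal$. Because $\Kcal$ is a cone, I can take $w = 2x \in \Kcal$ and $w = \tfrac{1}{2} x \in \Kcal$ to deduce $\left<z - x, x\right> = 0$, i.e. $\left<\tilde{x}, \tilde{y}\right> = 0$. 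Substituting back, $\left<z - x, w\right> \leq 0$ for all $w \in \Kcal$, which is exactly $\tilde{y} = z - x \in \Kcal^{\circ}$. Now $z = \tilde{x} + \tilde{y}$ with $\tilde{x} \in \Kcal$, $\tilde{y} \in \Kcal^{\circ}$, and $\left<\tilde{x},\tilde{y}\right> = 0$; by the already-proven direction (1) $\Rightarrow$ (2), this forces $\tilde{y} = P_{\Kcal^{\circ}}(z) = y$, so the decomposition coincides with the one in the statement.

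\textbf{Main obstacle.} The only genuinely delicate point is the cone-scaling trick used to extract orthogonality $\left<z - x, x\right> = 0$ from the one-sided variational inequality — it relies essentially on $\Kcal$ being a cone (so that both $2x$ and $\tfrac12 x$ are admissible test points), not merely convex. Everything else reduces to the elementary projection inequality for closed convex sets and bipolarity of closed convex cones, both of which are standard and can be cited from\ \citep{rock2009}. I would also briefly note at the outset that $P_{\Kcal}$ and $P_{\Kcal^{\circ}}$ are well-defined single-valued maps because $\Kcal$ and $\Kcal^{\circ}$ are nonempty closed convex sets.
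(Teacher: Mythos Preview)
Your argument is correct and is the standard textbook proof of Moreau's decomposition. Note, however, that the paper does not actually supply a proof of this theorem: it is stated in the Background section as a classical result, with only a citation to\ \citep{mor1962}. There is therefore nothing to compare against; your write-up would simply be filling in a proof that the authors chose to omit by reference.
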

The box-shaped set~$\left\{x\in\Rset^d~:~\forall{}i\in{}\left\{1,\ldots,d\right\},l_i\leq{}x_i\leq{}u_i\right\}$ is denoted by~$\mathbb{B}\left(l,u\right)$.~For $x\in\Rset^d$ and~$r>0$, the open ball of radius~$r$ centered around~$x$ is denoted by $\Bcal\left(x,r\right)$.~Given $x\in\Omega$, the set of active constraints at $x$ is denoted by~$\Acal_\Omega\left(x\right)$.~Given a set~$S\subseteq\Rset^d$, its relative interior is defined as the interior of~$S$ within its affine hull, and is denoted by~$\ri\left(S\right)$.

A critical point~$x^{\ast}$ of the function~$f+\iota_\Omega$ with~$f$ differentiable, is said to be non-degenerate if
\begin{align} 
-\nabla{}f\left(x^{\ast}\right)\in\ri\left(\Ncal_\Omega\left(x^{\ast}\right)\right)\enspace.\nonumber
\end{align}
Given a differentiable function~$f$ of several variables~$x_1,\ldots,x_N$, its gradient with respect to variable~$x_i$ is denoted by~$\nabla_if$.~Given a matrix~$M\in\Rset^{m\times n}$, its~$\left(i,j\right)$ element is denoted by~$M_{i,j}$.

\looseness-1A sequence $\left\{x^l\right\}$ converges to $x^\ast$ at a~Q-linear rate $\varrho\in\left]0,1\right[$ if, for $l$ large enough,
\begin{align}
\nonumber
\displaystyle\frac{\left\|x^{l+1}-x^{\ast}\right\|_2}{\left\|x^l-x^{\ast}\right\|_2}\leq\varrho\enspace.
\end{align}
\looseness-1The convergence rate is said to be~Q-superlinear if the above ratio tends to zero as~$l$ goes to infinity.
\section{A Trust Region Algorithm with Distributed Activity Detection}
\label{sec:algo}
\subsection{Algorithm Formulation}
\label{subsec:desc_algo}
The problem we consider is that of minimising a partially separable objective function subject to separable convex constraints.
\begin{align}
\label{eq:bnd_nlp}
&\minimise_{w}~L\left(w_1,\ldots,w_N\right)\\
&\text{s.t.}~w_i\in\Wcal_i,~\forall i\in\left\{1,\ldots,N\right\}\enspace,\nonumber
\end{align}
where~$w:=\left(w_1^{\Trans},\ldots,w_N^{\Trans}\right)^{\Trans}\in\Rset^n$, with~$n=\sum_{i=1}^Nn_i$, and~$\Wcal:=\Wcal_1\times\ldots\times\Wcal_N$, where the sets~$\Wcal_i\subset\Rset^{n_i}$ are closed and convex.~The following Assumption is standard in distributed computations\ \citep{bert1997}.
\begin{ass}[Colouring scheme]
\label{ass:part_sep}
\looseness-1The sub-variables~$w_1,\ldots,w_N$ can be re-ordered and grouped together in such a way that a~Gauss-Seidel minimisation sweep on the function~$L$ can be performed in parallel within~$K\ll N$ groups, which are updated sequentially.~In the sequel, the re-ordered variable is denoted by~$x=\left(x_1^{\Trans},\ldots,x_K^{\Trans}\right)^{\Trans}$.~The set~$\Wcal$ is transformed accordingly into~$\Omega=\Omega_{1}\times\ldots\times\Omega_K$.~It is worth noting that each set~$\Omega_k$ with~$k\in\left\{1,\ldots,K\right\}$ can then be decomposed further into sets~$\Wcal_i$ with~$i\in\left\{1,\ldots,N\right\}$. 
\end{ass}
As a consequence of~Assumption\ \ref{ass:part_sep},~NLP\ \eqref{eq:bnd_nlp} is equivalent to
\begin{align}
&\minimise_x~L\left(x_1,\ldots,x_K\right)\nonumber\\
&\text{s.t.}~x_k\in\Omega_k,~\forall k\in\left\{1,\ldots,K\right\}\enspace.\nonumber 
\end{align}
\begin{rk}
\looseness-1Such a partially separable structure in the objective (Assumption\ \ref{ass:part_sep}) is encountered very often in practice, for instance when relaxing network coupling constraints via an augmented Lagrangian penalty. Thus, by relaxing the nonlinear coupling constraint $C\left(w_1,\ldots,w_N\right)=0$ and the local equality constraints $g_i\left(w_i\right)=0$ of
\begin{align}
\minimise_{w_1,\ldots,w_N}&\sum_{i=1}^Nf_i\left(w_i\right)\nonumber\\
\text{s.t.}~&C\left(w_1,\ldots,w_N\right)=0\nonumber\\
&~~~~~~~~~~~g_i\left(w_i\right)=0\nonumber\\
&~~~~~~~~~~~~~~~~w_i\in\Wcal_i\nonumber\\
&~~~~~~~i\in\left\{1,\ldots,N\right\}\enspace,\nonumber
\end{align}
in a differentiable penalty function, one obtains an NLP of the form\ \eqref{eq:bnd_nlp}.~In NLPs resulting from the direct transcription of optimal control problems, the objective is generally separable and the constraints are stage-wise with a coupling between the variables at a given time instant with the variables of the next time instant.~In this particular case,~the number of groups is $K=2$.~In Section\ \ref{sec:ac_pow}, we illustrate this property by means of examples arising from various formulations of the~Optimal Power Flow (OPF) problem.~The number of colours~$K$ represents the level of parallelism that can be achieved in a~Gauss-Seidel method for solving\ \eqref{eq:bnd_nlp}.~Thus, in the case of a discretised OCP, an alternating projected gradient sweep can be applied in two steps during which all updates are parallel. 
\end{rk}
\looseness-1For the sake of exposition, in order to make the distributed nature of our algorithm apparent, we assume that every sub-variable $w_i$, with $i\in\left\{1,\ldots,N\right\}$, is associated with a computing node.~Two nodes are called\ \textit{neighbours} if they are coupled in the objective $L$.~Our goal is to find a first-order critical point of NLP\ \eqref{eq:bnd_nlp} via an iterative procedure for which we are given an initial feasible point $x^0\in\Omega$.~The iterative method described next aims at computing every iterate in a distributed fashion, which requires communications between neighbouring nodes and leads to a significant level of concurrency.
\begin{ass}
\label{ass:bnd_blw}
The objective function~$L$ is bounded below on~$\left\{x\in\Omega~:~L(x)\leq L(x^0)\right\}$.
\end{ass}
The algorithm formulation can be done for any convex set~$\Omega$, but some features are more suitable for linear inequality constraints. 
\begin{ass}[Polyhedral constraints]
\label{ass:poly_set}
For all~$k\in\left\{1,\ldots,K\right\}$, the set~$\Omega_k$ is a non-empty polyhedron, such that
\begin{align}
\nonumber
\Omega_k:=\left\{x\in\Rset^{n_k}~:~\left<\omega_{k,i},x\right>\leq{}h_{k,i},~i\in\left\{1,\ldots,m_k\right\}\right\}\enspace,
\end{align}
with $\omega_{k,i}\in\Rset^{n_k}$, $h_{k,i}\in\Rset$ for all $i\in\left\{1,\ldots,m_k\right\}$ and $n_k, m_k\geq1$.
\end{ass}
\begin{ass}
\label{ass:smooth}
\looseness-1The objective function~$L$ is continuously differentiable in an open set containing~$\Omega$.~Its gradient $\nabla{}L$ is uniformly continuous.
\end{ass}
It is well-known\ \citep{conn2000} that for problem\ \eqref{eq:bnd_nlp},~$x^\ast$ being a critical point is equivalent to 
\begin{align}
\label{eq:crit_def_1}
P_\Omega\left(x^\ast-\nabla{}L\left(x^\ast\right)\right)=x^\ast\enspace.
\end{align}
Algorithm\ \ref{algo:dis_tr} below is designed to compute a critical point~$x^\ast$ of the function~$L+\iota_\Omega$.~It is essentially a two-phase approach, in which an active-set is first computed and then, a quadratic model is minimised approximately on the current active face.~Standard two-phase methods compute the active-set by means of a centralised projected search, updating all variables centrally.~More precisely, a model of the objective is minimised along the projected objective gradient, which yields the~Cauchy point.~The model decrease provided by the~Cauchy point is then enhanced in a refinement stage.
\begin{algorithm}[h!]
	\caption{\label{algo:dis_tr}Trust Region Algorithm with Alternating Projections~(\textsc{trap})}
	\begin{algorithmic}[1]
		\State \textbf{Parameters:} Initial trust region radius~$\Delta$, update parameters~$\sigma_1$,~$\sigma_2$ and~$\sigma_3$ such that~$0<\sigma_1<\sigma_2<1<\sigma_3$, test ratios~$\eta_1$ and~$\eta_2$ such that~$0<\eta_1<\eta_2<1$, coefficients~$\gamma_1\in\left]0,1\right[$ and~$\gamma_2>0$, termination tolerance~$\epsilon$.
		\State \textbf{Input:} Initial guess~$x$, projection operators~$\left\{P_{\Omega_k}\right\}_{k=1}^K$, objective function~$L$, objective gradient~$\nabla{}L$.
		\While{$\left\|P_\Omega\left(x-\nabla{}L\left(x\right)\right)-x\right\|_2>\epsilon$}
		\State \underline{\textbf{Distributed activity detection (alternating gradient projections)}}: \label{line:ac_detec_beg}
			\For{$k=1\ldots,K$}
				\State ${z_k\leftarrow{}P_{\Omega_k}\left(x_k-\alpha_k\nabla_km\left(z_{\left[\!\left[1,k-1\right]\!\right]},x_k,x_{\left[\!\left[k+1,K\right]\!\right]}\right)\right)}$,\ \Comment{In parallel in group~$k$}\\
				 ~~~~~~~~~where~$\alpha_k$ is computed according to requirements\ \eqref{eq:stp_small},\ \eqref{eq:stp_large_1} and\ \eqref{eq:stp_large_2}.
			\EndFor \label{line:ac_detec_end}
		\State \underline{\textbf{Distributed refinement (Algorithm\ \ref{algo:pcg})}}:\label{line:ref_beg} 
			\State Find~$y\in\Omega$ such that 
			\State ~~~~${m\left(x\right)-m\left(y\right)\geq\gamma_1\left(m\left(x\right)-m\left(z\right)\right)}$
			\State ~~~~$\left\|y-x\right\|_\infty\leq\gamma_2\Delta$ 
			\State ~~~~$\Acal_{\Omega_k}\left(z_k\right)\subset\Acal_{\Omega_k}\left(y_k\right)$~for all~$k\in\left\{1,\ldots,K\right\}$.\label{line:ref_end}
		\State \underline{\textbf{Trust-region update}}:
		\State $\rho\leftarrow\displaystyle\nicefrac{L\left(x\right)-L\left(y\right)}{m\left(x\right)-m\left(y\right)}$
		\If{$\rho<\eta_1$}\ \Comment{Not successful}\label{line:test_beg}
		\State \textit{(Do not update~$x$)}
		\State Take~$\Delta$ within~$\left[\sigma_1\Delta,\sigma_2\Delta\right]$
		\ElsIf{$\rho\in\left[\eta_1,\eta_2\right]$}\Comment{Successful}\label{line:succ_it}
		\State $x\leftarrow{}y$
		\State Take~$\Delta$ within~$\left[\sigma_1\Delta,\sigma_3\Delta\right]$
		\State Update objective gradient~$\nabla{}L\left(x\right)$ and model hessian~$B\left(x\right)$. 
		\Else \Comment{Very successful}\label{line:ver_succ_it}
		\State $x\leftarrow{}y$
		\State Take~$\Delta$ within~$\left[\Delta,\sigma_3\Delta\right]$
		\State Update objective gradient~$\nabla{}L\left(x\right)$ and model hessian~$B\left(x\right)$ 
		\EndIf\label{line:test_end}
		\EndWhile	
		\end{algorithmic}
\end{algorithm}
\looseness-1Similarly to a two-phase method, in order to globalise convergence,~Algorithm\ \ref{algo:dis_tr} uses the standard trust region mechanism.~At every iteration, a model~$m$ of the objective function~$L$ is constructed around the current iterate~$x$ as follows
\begin{align}
\label{eq:model}
m\left(x'\right):=L\left(x\right)+\left<\nabla{}L\left(x\right),x'-x\right>+\displaystyle\frac{1}{2}\left<x'-x,B\left(x\right)\left(x'-x\right)\right>\enspace,
\end{align}
where $x'\in\Rset^n$ and $B\left(x\right)$ is a symmetric matrix. 
\begin{ass}[Uniform bound on model hessian]
\label{ass:bnd_hss_mod}
There exists~$\hat{B}>0$ such that
\begin{align}
\left\|B\left(x\right)\right\|_2\leq\hat{B}\enspace,\nonumber
\end{align}
for all~$x\in\Omega$.
\end{ass}
The following Assumption is necessary to ensure distributed computations in Algorithm\ \ref{algo:dis_tr}.~It is specific to Algorithm\ \ref{algo:dis_tr} and does not appear in the standard trust region methods\ \citep{burke1990}.
\begin{ass}[Structured model hessian]
\label{ass:spar_hss}
For all~$x\in\Omega$, for all~$i,j\in\left\{1,\ldots,n\right\}$, 
\begin{align}
\nonumber
\nabla^2L_{i,j}\left(x\right)=0\Leftrightarrow{}B_{i,j}\left(x\right)=0\enspace.
\end{align}
\end{ass}
\begin{rk}
It is worth noting that the partial separability structure of the objective function $L$ is transferred to the sparsity pattern of its hessian $\nabla^2L$, hence, by~Assumption\ \ref{ass:spar_hss}, to the sparsity pattern of the model hessian $B$.~Hence, a~Gauss-Seidel sweep on the model function $m$ can also be carried out in $K$ parallel steps.
\end{rk}
\looseness-1~The main characteristic of\ \textsc{trap} is the activity detection phase, which differs from the projected search in standard trust region methods\ \citep{burke1990}.~At every iteration,\ \textsc{trap} updates the current active-set by computing iterates $z_1,\ldots,z_K$ (Lines\ \ref{line:ac_detec_beg} to\ \ref{line:ac_detec_end}).~This is the main novelty of\ \textsc{trap}, compared to existing two-phase techniques, and allows for different step-sizes $\alpha_1,\ldots,\alpha_K$ per block of variables, which is relevant in a distributed framework, as the current active-set can be split among nodes and does not need to be computed centrally.~In the trust region literature, the point
\begin{align}
\label{eq:def_cauch_pt}
z:=\left(z_1^{\Trans},\ldots,z_K^{\Trans}\right)^{\Trans}\enspace,
\end{align}
is often referred to as the\ \textit{Cauchy point}.~We keep this terminology in the remainder of the paper.~It is clear from its formulation that\ \textsc{trap} allows one to compute~Cauchy points via independent projected searches on every node.~Once the~Cauchy points $z_1,\ldots,z_K$ have been computed, they are used in the refinement step to compute a new iterate $y$ that satisfies the requirements shown from Lines\ \ref{line:ref_beg} to\ \ref{line:ref_end}.~The last step consists in checking if the model decrease $m(y)-m(x)$ is close enough to the variation in the objective $L$ (Lines\ \ref{line:test_beg} to\ \ref{line:test_end}).~In this case, the iterate is updated and the trust region radius $\Delta$ increased, otherwise the radius is shrunk and the iterate frozen.~This operation requires a global exchange of information between nodes.

In the remainder, the objective gradient $\nabla{}L\left(x\right)$ is denoted by $g\left(x\right)$ and the objective hessian $\nabla^2L\left(x\right)$ by $H\left(x\right)$.~The model function $m$ is an approximation of the objective function $L$ around the current iterate $x$.~The quality of the approximation is controlled by the trust region, defined as the box
\begin{align}
\nonumber
\mathbb{B}\left(x-\Delta,x+\Delta\right)\enspace, 
\end{align}
where~$\Delta$ is the trust region radius. 

\looseness-1In the rest of the paper, we denote the~Cauchy points by~$z_k$ or~$z_k\left(\alpha_k\right)$ without distinction, where~$\alpha_k$ are appropriately chosen step-sizes.~More precisely, following Section~$3$ in\ \citep{burke1990}, in\ \textsc{trap}, the block-coordinate step-sizes~$\alpha_k$ are chosen so that for all~$k\in\left\{1,\ldots,K\right\}$, the~Cauchy points~$z_k$ satisfy 
\begin{align}
\label{eq:stp_small}
\left\{
\begin{aligned}
&m\left(z_{\left[\!\left[1,k-1\right]\!\right]},z_k,x_{\left[\!\left[k+1,K\right]\!\right]}\right)\leq{}m\left(z_{\left[\!\left[1,k-1\right]\!\right]},x_k,x_{\left[\!\left[k+1,K\right]\!\right]}\right)\\
&~~~~~~~~~~~~~~~~~~~~~~~~~~~~~~~~~~~+\nu_0\left<\nabla_km\left(z_{\left[\!\left[1,k-1\right]\!\right]},x_k,x_{\left[\!\left[k+1,K\right]\!\right]}\right),z_k-x_k\right>\\
&\left\|z_k-x_k\right\|_\infty\leq\nu_2\Delta 
\end{aligned}\right.\enspace,
\end{align}
with~$\nu_0\in\left]0,1\right[$ and~$\nu_2>0$, where~$z_{\left[\!\left[1,k-1\right]\!\right]}$ stands for~$\left(z_1^{\Trans},\ldots,z_{k-1}^{\Trans}\right)^{\Trans}$, along with the condition that there exists positive scalars~$\nu_1<\nu_2$,~$\nu_3$,~$\nu_4$~and~$\nu_5$ for all~$k\in\left\{1,\ldots,K\right\}$, 
\begin{align}
\label{eq:stp_large_1}
\alpha_k\in\left[\nu_4,\nu_5\right]~~~~~\text{or}~~~~~\alpha_k\in\left[\nu_3\bar{\alpha}_k,\nu_5\right]
\end{align}
where the step-sizes $\bar{\alpha}_k$ are such that one of the following conditions hold for every $k\in\left\{1,\ldots,K\right\}$, 
\begin{align}
\label{eq:stp_large_2}
&m\left(z_{\left[\!\left[1,k-1\right]\!\right]},z_k\left(\bar{\alpha}_k\right),x_{\left[\!\left[k+1,K\right]\!\right]}\right)>m\left(z_{\left[\!\left[1,k-1\right]\!\right]},x_k,x_{\left[\!\left[k+1,K\right]\!\right]}\right)\\
&~~~~~~~~~~~~~~~~~~~~~~~~~~~~~~~~~~~~~~~~~+\nu_0\left<\nabla_km\left(z_{\left[\!\left[1,k-1\right]\!\right]},x_k,x_{\left[\!\left[k+1,K\right]\!\right]}\right),z_k\left(\bar{\alpha}_k\right)-x_k\right>\enspace,\nonumber
\end{align}
or
\begin{align}
\label{eq:stp_large_3}
\left\|z_k\left(\bar{\alpha}_k\right)-x_k\right\|_\infty\geq\nu_1\Delta\enspace,
\end{align}
\looseness-1~Conditions\ \eqref{eq:stp_small} ensure that the step-sizes~$\alpha_k$ are small enough to enforce a sufficient decrease coordinate-wise, as well as containment within a scaled trust region.~Conditions\ \eqref{eq:stp_large_1},\ \eqref{eq:stp_large_2} and\ \eqref{eq:stp_large_3} guarantee that the step-sizes~$\alpha_k$ do not become arbitrarily small.~All conditions\ \eqref{eq:stp_small},\ \eqref{eq:stp_large_1} and\ \eqref{eq:stp_large_2} can be tested in parallel in each of the~$K$ groups of variables.~In the next two paragraphs\ \ref{subsec:stp_size} and\ \ref{subsec:comp_cons} of this section, the choice of step-sizes~$\alpha_k$ ensuring the sufficient decrease is clarified, as well as the distributed refinement step.~In the next Section\ \ref{sec:cv_ana}, the convergence properties of\ \textsc{trap} are analysed.~Numerical examples are presented in Section\ \ref{sec:ac_pow}. 
 
\subsection{Step-sizes Computation in the Activity Detection Phase}
\label{subsec:stp_size}
\looseness-1At a given iteration of\ \textsc{trap}, the step-sizes~$\alpha_k$ are computed by backtracking to ensure a sufficient decrease at every block of variables and coordinate-wise containment in a scaled trust region as formalised by\ \eqref{eq:stp_small}.~It is worth noting that the coordinate-wise backtracking search can be run in parallel among the variables of group~$k$, as they are decoupled from each other.~As a result, there is one step-size per sub-variable~$w_i$ in group~$k$.~Yet, for simplicity, we write it as a single step-size~$\alpha_k$.~The reasoning of Section\ \ref{sec:cv_ana} can be adapted accordingly.~The following Lemma shows that a coordinate-wise step-size~$\alpha_k$ can be computed that ensures conditions\ \eqref{eq:stp_small},\ \eqref{eq:stp_large_1},\ \eqref{eq:stp_large_2} and\ \eqref{eq:stp_large_3} on every block of coordinates~$k\in\left\{1,\ldots,K\right\}$.
\begin{lem}
\label{lem:bck}
Assume that Assumption\ \ref{ass:bnd_hss_mod} holds.~For all~$k\in\left\{1,\ldots,K\right\}$, an iterate~$z_k$ satisfying conditions\ \eqref{eq:stp_small},\ \eqref{eq:stp_large_1},\ \eqref{eq:stp_large_2} and\ \eqref{eq:stp_large_3} can be found after a finite number of backtracking iterations.
\end{lem}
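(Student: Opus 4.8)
The plan is to treat the backtracking search on each block $k$ independently (the blocks within a colour group are decoupled, so it suffices to argue for a single coordinate/block), and to show that a geometrically decreasing sequence of trial step-sizes must eventually satisfy all four conditions \eqref{eq:stp_small}, \eqref{eq:stp_large_1}, \eqref{eq:stp_large_2} and \eqref{eq:stp_large_3}. The key observation is that the restriction of the model $m$ to block $k$, with the other blocks fixed at their current (already updated) values, is itself a quadratic in $x_k$ with Hessian block $B_{k,k}(x)$, whose norm is bounded by $\hat B$ thanks to Assumption \ref{ass:bnd_hss_mod}. So the analysis reduces to the classical projected-gradient sufficient-decrease argument applied to a single smooth (indeed quadratic) function over the convex set $\Omega_k$.

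\textbf{Main steps.} First I would fix $k$, abbreviate $\phi(t) := m(z_{[\![1,k-1]\!]}, P_{\Omega_k}(x_k - t\,\nabla_k m(\cdots)), x_{[\![k+1,K]\!]})$ for $t>0$, and record the standard descent estimate: using the projection inequality $\langle x_k - t\,\nabla_k m - z_k(t),\, x_k - z_k(t)\rangle \le 0$ together with the quadratic upper bound coming from $\|B_{k,k}\|_2 \le \hat B$, one obtains
\begin{align}
\nonumber
m\big(z_{[\![1,k-1]\!]}, z_k(t), x_{[\![k+1,K]\!]}\big) &\le m\big(z_{[\![1,k-1]\!]}, x_k, x_{[\![k+1,K]\!]}\big) \\
\nonumber
&\quad + \Big(1 - \tfrac{t\hat B}{2}\Big)\big\langle \nabla_k m(\cdots), z_k(t) - x_k\big\rangle,
\end{align}
and since $\langle \nabla_k m(\cdots), z_k(t) - x_k\rangle \le -\tfrac1t\|z_k(t)-x_k\|_2^2 \le 0$, the Armijo-type inequality in \eqref{eq:stp_small} (with parameter $\nu_0 \in \,]0,1[$) holds as soon as $t \le 2(1-\nu_0)/\hat B$. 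Second, the containment condition $\|z_k(t)-x_k\|_\infty \le \nu_2\Delta$ in \eqref{eq:stp_small} holds for $t$ small because $z_k(t) \to x_k$ as $t\to 0$ (continuity of $P_{\Omega_k}$), so a finite number of halvings starting from some initial trial step makes both parts of \eqref{eq:stp_small} true. Third, for the lower-bound conditions: run the backtracking so that the \emph{last rejected} trial step $\bar\alpha_k$ is the one just before acceptance; by construction $\bar\alpha_k$ violated \eqref{eq:stp_small}, hence it violated either the Armijo part — which is exactly \eqref{eq:stp_large_2} — or the containment part — which gives $\|z_k(\bar\alpha_k)-x_k\|_\infty \ge \nu_2\Delta > \nu_1\Delta$, i.e. \eqref{eq:stp_large_3}. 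Finally, the accepted step $\alpha_k$ relates to $\bar\alpha_k$ by the fixed backtracking factor, so $\alpha_k \ge \nu_3\bar\alpha_k$ with $\nu_3$ the backtracking ratio; and if the very first trial step is already accepted one takes $\alpha_k$ in the fixed interval $[\nu_4,\nu_5]$, so \eqref{eq:stp_large_1} holds in either case. Finiteness is immediate since each failed iteration halves $t$ and the acceptance region $(0, 2(1-\nu_0)/\hat B] \cap \{t : \|z_k(t)-x_k\|_\infty \le \nu_2\Delta\}$ contains a neighbourhood of $0$.

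\textbf{Anticipated obstacle.} The routine part is the quadratic descent bound; the only genuinely delicate point is bookkeeping the interplay between the two clauses of \eqref{eq:stp_small} and the \emph{or} structure of \eqref{eq:stp_large_1}--\eqref{eq:stp_large_3} — specifically, making sure that whichever of the two sufficient-decrease requirements fails at the last rejected step is matched to the corresponding member of \eqref{eq:stp_large_2}/\eqref{eq:stp_large_3}, and that the edge case "first step accepted" is handled by the $[\nu_4,\nu_5]$ branch of \eqref{eq:stp_large_1} with a pre-chosen initial step-size. I would also need to note that $\nabla_k m(\cdots)$ is evaluated at the point with blocks $1,\dots,k-1$ already updated, so the quadratic in $x_k$ whose Hessian is $B_{k,k}$ is the right object; this is a notational rather than a substantive difficulty.
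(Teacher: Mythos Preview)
Your proposal is correct and follows essentially the same route as the paper: combine the descent lemma (Lipschitz model gradient via Assumption~\ref{ass:bnd_hss_mod}) with the projection inequality to obtain an explicit threshold on $\alpha_k$ below which the Armijo part of~\eqref{eq:stp_small} holds, and then let the backtracking mechanism supply~\eqref{eq:stp_large_1} with $\nu_3$ equal to the contraction factor and $\nu_4$ equal to the initial trial step. Your treatment is in fact more complete than the paper's, which derives the threshold $\alpha_k\le(1-\nu_0)/\hat B$ (you get the sharper $2(1-\nu_0)/\hat B$ by using the projection inequality directly rather than the weaker minimisation inequality) but does not explicitly discuss the trust-region containment clause of~\eqref{eq:stp_small} nor spell out why the last rejected step $\bar\alpha_k$ satisfies~\eqref{eq:stp_large_2} or~\eqref{eq:stp_large_3}; your bookkeeping on those points is exactly what is needed to close the argument.
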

\begin{proof}
Let~$k\in\left\{1,\ldots,K\right\}$.~We first show that for a sufficiently small~$\alpha_k$, conditions\ \eqref{eq:stp_small} are satisfied.~By definition of the Cauchy point~$z_k$,  
\begin{align}
\nonumber
z_k=\argmin_{z\in\Omega_k}~\left<\nabla_km\left(z_{\left[\!\left[1,k-1\right]\!\right]},x_k,x_{\left[\!\left[k+1,K\right]\!\right]}\right),z-x_k\right>+\displaystyle\frac{1}{2\alpha_k}\left\|z-x_k\right\|_2^2\enspace,
\end{align}
which implies that 
\begin{align}
\nonumber
\left<\nabla_km\left(z_{\left[\!\left[1,k-1\right]\!\right]},x_k,x_{\left[\!\left[k+1,K\right]\!\right]}\right),z_k-x_k\right>+\displaystyle\frac{1}{2\alpha_k}\left\|z_k-x_k\right\|_2^2\leq0\enspace,
\end{align}
Hence, as~$\nu_0\in\left]0,1\right[$, it follows that 
\begin{align}
&\left<\nabla_km\left(z_{\left[\!\left[1,k-1\right]\!\right]},x_k,x_{\left[\!\left[k+1,K\right]\!\right]}\right),z_k-x_k\right>+\displaystyle\frac{1-\nu_0}{2\alpha_k}\left\|z_k-x_k\right\|_2^2\leq\nonumber\\
&~~~~~~~~~~~~~~~~~~~~~~~~~~~~~~~~~~~~~~~~~~~~~~~~~~~~~~~~\nu_0\left<\nabla_km\left(z_{\left[\!\left[1,k-1\right]\!\right]},x_k,x_{\left[\!\left[k+1,K\right]\!\right]}\right),z_k-x_k\right>\enspace.\nonumber
\end{align}
However, from the descent Lemma, which can be applied since the model gradient is Lipschitz continuous by Assumption\ \ref{ass:bnd_hss_mod}, 
\begin{align}
&m\left(z_{\left[\!\left[1,k-1\right]\!\right]},z_k,x_{\left[\!\left[k+1,K\right]\!\right]}\right)\leq{}m\left(z_{\left[\!\left[1,k-1\right]\!\right]},x_k,x_{\left[\!\left[k+1,K\right]\!\right]}\right)+\left<\nabla_km\left(z_{\left[\!\left[1,k-1\right]\!\right]},x_k,x_{\left[\!\left[k+1,K\right]\!\right]}\right),z_k-x_k\right>\nonumber\\
&~~~~~~~~~~~~~~~~~~~~~~~~~~~~~~~~~~~~~~~~~~~~~~~~~~~~~~~~~~~~~~~~~~~~~~+\displaystyle\frac{\hat{B}}{2}\left\|z_k-x_k\right\|_2^2\enspace.\nonumber
\end{align}
By choosing
\begin{align}
\nonumber
\alpha_k\leq\displaystyle\frac{1-\nu_0}{\hat{B}}\enspace,
\end{align}
condition\ \eqref{eq:stp_small} is satisfied after a finite number of backtracking iterations.~Denoting by~$q_k$ the smallest integer such that requirement\ \eqref{eq:stp_small} is met,~$\alpha_k$ can be written 
\begin{align}
\nonumber
\alpha_k=c^{q_k}\cdot\alpha^{(0)}\enspace,
\end{align}
where~$c\in\left]0,1\right[$ and~$\alpha^{(0)}>0$.~Then, condition\ \eqref{eq:stp_large_1} is satisfied with~$\nu_4=\alpha^{(0)}$ and~$\nu_3=c$.
\end{proof}
Lemma\ \ref{lem:bck} is very close to Theorem~$4.2$ in\ \citep{more1988}, but the argument regarding the existence of the step-sizes~$\alpha_k$ is different.

\subsection{Distributed Computations in the Refinement Step}
\label{subsec:comp_cons}
\looseness-1In Algorithm\ \ref{algo:dis_tr}, the objective gradient~$g\left(x\right)$ and model hessian~$B\left(x\right)$ are updated after every successful iteration.~This task requires exchanges of variables between neighbouring nodes, as the objective is partially separable (Ass.\ \ref{ass:part_sep}).~Node~$i$ only needs to store the sub-part of the objective function~$L$ that combines its variable~$w_i$ and the variables associated to its neighbours.~However, the refinement step (line\ \ref{line:ref_beg} to\ \ref{line:ref_end} in Algorithm\ \ref{algo:dis_tr}), in which one obtains a fraction of the model decrease yielded by the Cauchy points~$z_1,\ldots,z_K$, should also be computed in a distributed manner.~As detailed next, this phase consists in solving the~Newton problem on the subspace of free variables at the current iteration, which is defined as the set of free variables at the~Cauchy points~$z_1,\ldots,z_K$.~In order to achieve a reasonable level of efficiency in the trust region procedure, this step is generally performed via the~Steihaug-Toint~CG, or~sCG\ \citep{stei1983}.~The~sCG algorithm is a~CG procedure that is cut if a negative curvature direction is encountered or a problem bound is hit in the process.~Another way of improving on the~Cauchy point to obtain fast local convergence is the~Dogleg strategy\ \citep{noce2006}.~However, this technique requires the model hessian~$B$ to be positive definite\ \citep{noce2006}.~This condition does not fit well with distributed computations, as positive definiteness is typically enforced by means of BFGS updates, which are know for not preserving the sparsity structure of the objective without non-trivial modifications and assumptions\ \citep{yama2008}.~Compared to direct methods, iterative methods such as the~sCG procedure have clear advantages in a distributed framework, for they do not require assembling the hessian matrix on a central node.~Furthermore, their convergence speed can be enhanced via block-diagonal preconditioning, which is suitable for distributed computations.~In the sequel, we briefly show how a significant level of distributed operations can be obtained in the~sCG procedure, mainly due to the sparsity structure of the model hessian that matches the partial separability structure of the objective function.~More details on distributed implementations of the~CG algorithm can be found in numerous research papers\ \citep{verschoor2012,azevedo1993,fei2014}.~The~sCG algorithm that is described next is a rearrangement of the standard~sCG procedure following the idea of\ \citep{azevedo1993}.~The two separate inner products that usually appear in the~CG are grouped together at the same stage of the~Algorithm. 

\looseness-1An important feature of the refinement step is the increase of the active set at every iteration.~More precisely, in order to ensure finite detection of activity, the set of active constraints at the points~$y_1,\ldots,y_K$, obtained in the refinement phase, needs to contain the set of active constraints at the~Cauchy points~$z_1,\ldots,z_K$, as formalised at line\ \ref{line:ref_end} of~Algorithm\ \ref{algo:dis_tr}.~This requirement is very easy to fulfil when~$\Omega$ is a bound constraint set, as it just requires enforcing the constraint
\begin{align}
\nonumber
y_{k,i}=z_{k,i},~i\in\left\{j\in\left\{1,\ldots,n_k\right\}~:~z_{k,j}=\underline{x}_{k,j}\ \text{or}\ \bar{x}_{k,j}\right\}
\end{align}
for all groups~$k\in\left\{1,\ldots,K\right\}$ in the trust region problem at the refinement step.

\looseness-1For the convergence analysis that follows in Section\ \ref{sec:cv_ana}, the refinement step needs to be modified compared to existing trust region techniques.~Instead of solving the standard refinement problem
\begin{align}
\minimise_p~&\left<g\left(x\right),p\right>+\displaystyle\frac{1}{2}\left<p,B\left(x\right)p\right>\nonumber\\
\text{s.t.}~&\left\|p\right\|_\infty\leq\gamma_2\Delta\nonumber\\
&x+p\in\Omega\nonumber\\
&\Acal_\Omega\left(z\right)\subseteq\Acal_\Omega\left(x+p\right)\enspace,\nonumber
\end{align}
in which the variables corresponding to indices of active constraints at the~Cauchy point $z$ are fixed to zero, we solve a regularised version
\begin{align}
\label{eq:prox_ref}
\minimise_{y\in\Omega}&~\left<g\left(x\right),y-x\right>+\displaystyle\frac{1}{2}\left<y-x,B\left(x\right)\left(y-x\right)\right>+\displaystyle\frac{\sigma}{2}\left\|y-z\right\|_2^2\\
\text{s.t.}&~\left\|y-x\right\|_\infty\leq\gamma_2\Delta\nonumber\\
&\Acal_\Omega\left(z\right)\subseteq\Acal_\Omega\left(y\right)\enspace,\nonumber
\end{align}
\looseness-1where $\sigma\in\left]\underline{\sigma},\bar{\sigma}\right[$ with $\underline{\sigma}>0$, and $z$ is the~Cauchy point yielded by the procedure described in the previous paragraph\ \ref{subsec:stp_size}.~The regularisation coefficient $\sigma$ should not be chosen arbitrarily, as it may inhibit the fast local convergence properties of the~Newton method.~This point is made explicit in paragraph\ \ref{subsec:cv_rate} of Section\ \ref{sec:cv_ana}.~The regularised trust region subproblem\ \eqref{eq:prox_ref} can be equivalently written
\begin{align}
\label{eq:prox_ref_eq}
\minimise_{p}&~\left<g_{\sigma}\left(x\right),p\right>+\displaystyle\frac{1}{2}\left<p,B_{\sigma}\left(x\right)p\right>\\
\text{s.t.}&~x+p\in\Omega\nonumber\\
&~\left\|p\right\|_\infty\leq\gamma_2\Delta\nonumber\\
&\Acal_\Omega\left(z\right)\subseteq\Acal_\Omega\left(x+p\right)\enspace,\nonumber
\end{align}
with
\begin{align}
\label{eq:reg_g_hss}
g_{\sigma}\left(x\right):=g\left(x\right)-\sigma(z-x),~~B_{\sigma}\left(x\right):=B\left(x\right)+\displaystyle\frac{\sigma}{2}I\enspace.
\end{align}
As in standard trust region methods, we solve the refinement subproblem\ \eqref{eq:prox_ref_eq} by means of CG iterations, which can be distributed as a result of Assumption\ \ref{ass:spar_hss}.~In order to describe this stage in Algorithm\ \ref{algo:pcg}, one needs to assume that $\Omega$ is a box constraint set.~In the remainder, we denote by $Z$ the matrix whose columns are an orthonormal basis of the subspace
\begin{align}
\nonumber
V\left(z\right):=\left\{x\in\Rset^n~:~\left<\omega_{k,i},x_k\right>=0,~i\in\Acal_{\Omega_k}\left(z_k\right),~k\in\left\{1,\ldots,K\right\}\right\}\enspace.
\end{align}
\begin{algorithm}[h!]
	\caption{\label{algo:pcg}Distributed Safeguarded Conjugate Gradient (sCG)}
	\begin{algorithmic}[1]
	\State \textbf{Input:}~reduced model hessian~$\hat{B}_{\sigma}:=Z^{\Trans}B_{\sigma}Z$,~reduced gradient~$\hat{g}:=Z^{\Trans}g$,~initial guess~$\hat{z}:=Z^{\Trans}z$
	\State \textbf{Parameters:}~stopping tolerance~$\hat{\epsilon}:=\xi\left\|\hat{g}\right\|_2$ with~$\xi\in\left]0,1\right[$
	\State Initialise~$\hat{x}$,~$\hat{p}$,~$\hat{v}$,~$\hat{r}$,~$\hat{t}$ and~$\hat{u}_{\text{prev}}$ via a standard sCG iteration using~$z$,~$x$,~$Z$,~$B_{\sigma}$,~$\hat{B}_{\sigma}$ and~$\hat{g}$
	\While{$\hat{u}>\hat{\epsilon}^2$ and~$\hat{t}>0$}
	\State Compute structured matrix-vector product~$\hat{s}\leftarrow{}\hat{B}_{\sigma}\hat{r}$\ \Comment{Local communications}\label{pcg:prod}
	\For{$k=1\ldots K$}\ \Comment{In parallel among $K$ groups}\label{pcg:beg_inprd}
	\State Compute~$\left<\hat{r}_k,\hat{r}_k\right>$ and~$\left<\hat{r}_k,\hat{s}_k\right>$ 
	\EndFor\label{pcg:end_inprd}
	\State $\hat{u}\leftarrow\sum_{i=k}^K\left<\hat{r}_k,\hat{r}_k\right>$,~$\hat{\delta}\leftarrow\sum_{k=1}^K\left<\hat{r}_k,\hat{s}_k\right>$\ \Comment{Global summations}\label{pcg:glob_sum}
	\State Compute step-sizes~$\hat{\beta}\leftarrow\displaystyle\nicefrac{\hat{u}}{\hat{u}_{\text{prev}}}$ and~$\hat{t}\leftarrow\hat{\delta}-\hat{\beta}^2\hat{t}$
	\For{$k=1\ldots K$}\ \Comment{In parallel among~$K$ groups}\label{pcg:beg_up_dir}
	\State Update conjugate direction~$\hat{p}_k\leftarrow\hat{r}_k+\hat{\beta}\hat{p}_k$ and~$\hat{v}_k\leftarrow\hat{s}_k+\hat{\beta}\hat{v}_k$
	\State Compute smallest step-size~$a_k$ such that~$\hat{x}_k+a_k\hat{p}_k$ hits a bound~$\underline{x}_k$,~$\bar{x}_k$ or the trust region boundary
	\EndFor\label{pcg:end_up_dir}
	\If{$\hat{t}\leq0$}\ \Comment{Negative curvature check}
	\State Compute step-size~$\hat{a}\leftarrow\min\left\{a_1,\ldots,a_K\right\}$ to hit boundary of~$\mathbb{B}\left(x-\Delta,x+\Delta\right)\cap\Omega$ \label{pcg:stp_sz}
	\Else
	\State Compute standard CG step-size~$\hat{a}\leftarrow\displaystyle\nicefrac{\hat{u}}{\hat{t}}$
	\EndIf
	\For{$k=1\ldots K$}\ \Comment{In parallel among~$K$ groups}\label{pcg:beg_up_it}
	\State Update iterate~$\hat{x}_k\leftarrow\hat{x}_k+\hat{a}\hat{p}_k$ and residual~$\hat{r}_k\leftarrow\hat{r}_k-\hat{a}\hat{v}_k$
	\EndFor\label{pcg:end_up_it}
	\State $\hat{u}_{\text{prev}}\leftarrow\hat{u}$
	\EndWhile
	\State \textbf{Output:}~$y\leftarrow{}z+Z(\hat{x}-\hat{z})$
	\end{algorithmic}
\end{algorithm}
\begin{rk}
\looseness-1It is worth noting that the requirement~$m(x)-m(y)\geq\gamma_1\left(m(x)-m(z)\right)$, with~$\gamma_1<1$, is satisfied after any iteration of Algorithm\ \ref{algo:pcg}, as the initial guess is the~Cauchy point~$z$ and the~sCG iterations ensure monotonic decrease of the regularised model (Theorem~$2.1$ in\ \citep{stei1983}).
\end{rk} 
\begin{rk}
\looseness-1It is worth noting that the sparsity pattern of the reduced model hessian~$\hat{B}_{\sigma}$ has the same structure as the sparsity pattern of the model hessian~$B$, as the selection matrix~$Z$ has a block-diagonal structure.~Moreover, the partial separability structure of the objective matches the sparsity patterns of both the hessian and the reduced hessian.~For notational convenience, Algorithm\ \ref{algo:pcg} is written in terms of variables~$x_1,\ldots,x_K$, but it is effectively implementable in terms of variables~$w_1,\ldots,w_N$.~The inner products (Lines\ \ref{pcg:beg_inprd} to\ \ref{pcg:end_inprd}) and updates (Lines\ \ref{pcg:beg_up_dir} to\ \ref{pcg:end_up_dir}, lines\ \ref{pcg:beg_up_it} to\ \ref{pcg:end_up_it}) can be computed in parallel at every node, as well as the structured matrix-vector product (Line\ \ref{pcg:prod}).    
\end{rk} 
\looseness-1In Algorithm\ \ref{algo:pcg}, the reduced model hessian~$\hat{B}$ can be evaluated when computing the product at line\ \ref{pcg:prod}, which requires local exchanges of vectors between neighbouring nodes, since the sparsity pattern of~$\hat{B}$ represents the coupling structure in the objective~$L$.~From a distributed implementation perspective, the more costly parts of the refinement procedure\ \ref{algo:pcg} are at line\ \ref{pcg:glob_sum} and line\ \ref{pcg:stp_sz}.~These operations consist in summing up the inner products from all nodes and a minimum search over the step-sizes that ensure constraint satisfaction and containment in the trust region.~They need to be performed on a central node that has access to all data from other nodes, or via a consensus algorithm.~Therefore, lines\ \ref{pcg:glob_sum} and\ \ref{pcg:stp_sz} come with a communication cost, although the amount of transmitted data is very small (one scalar per node).
\begin{figure}[h!]
\begin{center}
\includegraphics[scale=0.35]{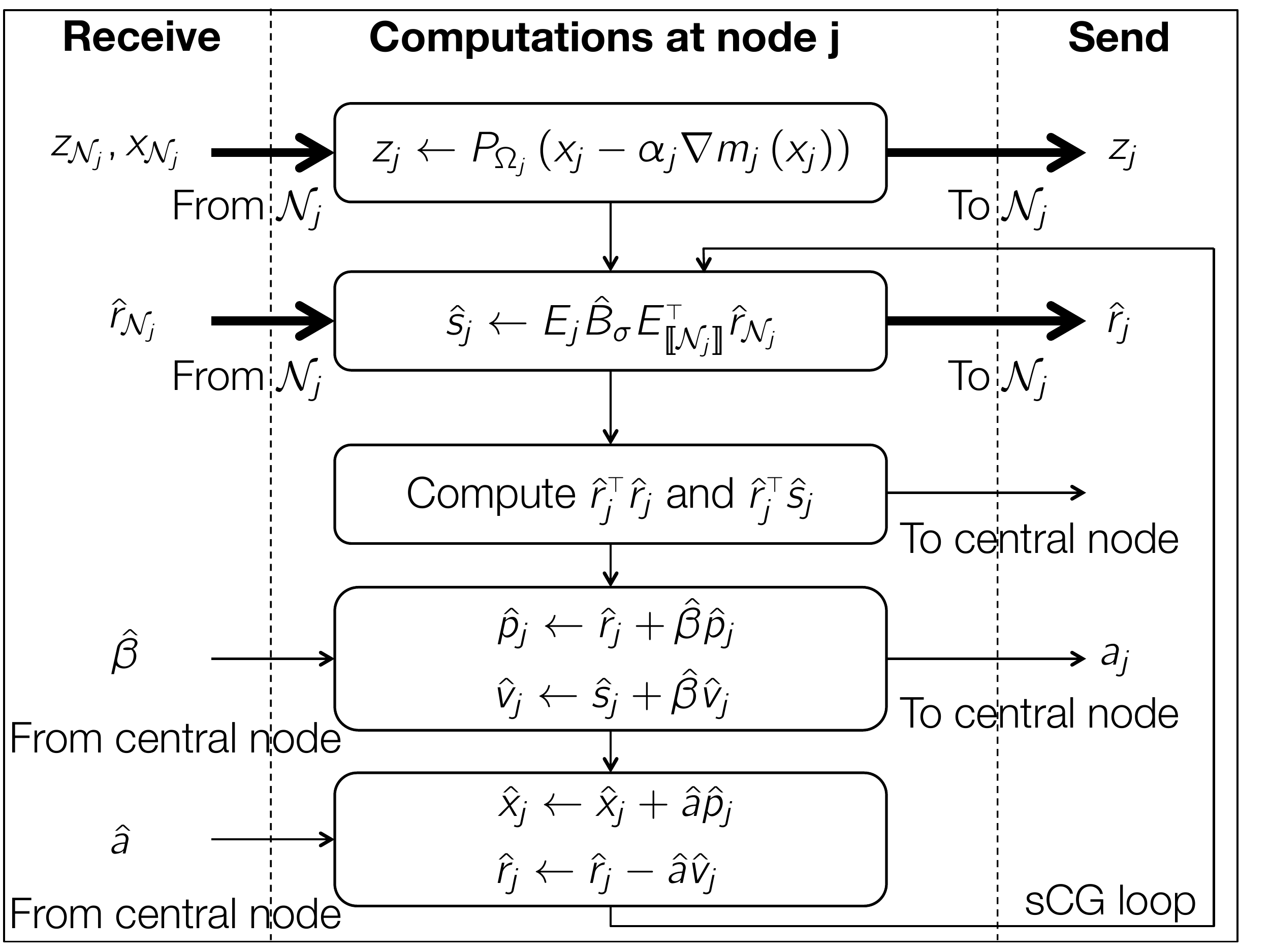}
\end{center}
\caption{\label{fig:wrk_flw}Workflow at node $j$ in terms of local computations, communications with the set of neighbours $\Ncal_j$ and a central node.~Note that we use the index $j$ for a node, and not $k$, which corresponds to a group of nodes, in which computations are performed in parallel.~Thus, the nodes in the set $\Ncal_j$ are not in the same group as node $j$.~Thick arrows represent communications involving vectors, whereas thin arrows stand for communications of scalars.~Matrix $E_j$ is defined at Eq.\ \eqref{eq:def_E_k}.}
\end{figure}
\looseness-1In the end, one should notice that the information that is required to be known globally by all nodes~$\left\{1,\ldots,N\right\}$ is fairly limited at every iteration of\ \textsc{trap}.~It only consists of the trust region radius~$\Delta$ and the step-sizes~$\hat{a}$ and~$\hat{\beta}$ in the refinement step\ \ref{algo:pcg}.~Finally, at every iteration, all nodes need to be informed of the success or failure of the iteration so as to update or freeze their local variables.~This is the result of the trust region test, which needs to be carried out on a central node.~In Figure\ \ref{fig:wrk_flw}, we give a sketch of the workflow at a generic node~$j$.~One can notice that, in terms of local computations,\ \textsc{trap} behaves as a standard two-phase approach on every node.
\section{Convergence Analysis}
\label{sec:cv_ana}
\looseness-1The analysis of\ \textsc{trap} that follows is along the lines of the convergence proof of trust region methods in\ \citep{burke1990}, where the Cauchy point is computed via a projected search, which involves a sequence of evaluations of the model function on a central node.~However, for\ \textsc{trap}, the fact that the Cauchy point is yielded by an distributed projected gradient step on the model function requires some modifications in the analysis.~Namely, the lower bound on the decrease in the model and the upper bound on criticality at the Cauchy point are expressed in a rather different way.~However, the arguments behind the global convergence proof are essentially the same as in\ \citep{burke1990}.

\looseness-1In this section, for theoretical purposes only, another first-order criticality measure different from\ \eqref{eq:crit_def_1} is used.~We utilise the condition that~$x^\ast\in\Omega$ is a first-order critical point if the projected gradient at~$x^\ast$ is zero, 
\begin{align}
\label{eq:crit_def_2}
\nabla_{\Omega}L\left(x^\ast\right)=0\enspace,
\end{align}
where, given~$x\in\Omega$, the projected gradient is defined as
\begin{align}
\nonumber
\nabla_{\Omega}L\left(x\right):=P_{\Tcal_\Omega\left(x\right)}\left(-g\left(x\right)\right)\enspace.
\end{align}
Discussions on this first-order criticality measure can be found in\ \citep{conn2000}.~It is equivalent to the standard optimality condition
\begin{align}
\nonumber
\left<g\left(x^\ast\right),x-x^\ast\right>\geq0,~\text{for all}~x\in\Omega\enspace.
\end{align}
It follows from~Moreau's decomposition that a point~$x^{\ast}$ satisfying\ \eqref{eq:crit_def_2} automatically satisfies\ \eqref{eq:crit_def_1}.~Consequently, it is perfectly valid to use\ \eqref{eq:crit_def_1} for the convergence analysis of\ \textsc{trap}.

\subsection{Global Convergence to First-order Critical Points}
\label{subsec:glob_cv}
\looseness-1We start with an estimate of the block-coordinate model decrease provided by the Cauchy points~$z_k$,~$k\in\left\{1,\ldots,K\right\}$, of Algorithm\ \ref{algo:dis_tr}.~For this purpose, we define for all~$k\in\left\{1,\ldots,K\right\}$,
\begin{align}
\label{eq:def_mod_coor}
m_k\left(x'\right):=m\left(z_{\left[\!\left[1,k-1\right]\!\right]},x',x_{\left[\!\left[k+1,K\right]\!\right]}\right)\enspace,
\end{align}
\looseness-1where~$x'\in\Rset^{n_k}$.~This corresponds to the model function evaluated at~$x'$ with the block-coordinates~$1$ to~$k-1$ being fixed to the associated Cauchy points~$z_1,\ldots,z_{k-1}$ and the block-coordinates~$k+1$ to~$K$ having values~$x_{k+1},\ldots,x_K$.~Note that by definition of the function~$m_k$,
\begin{align}
m_k\left(z_k\right)=m_{k+1}\left(x_{k+1}\right)\enspace,\nonumber
\end{align}
for all~$k\in\left\{1,\ldots,K-1\right\}$.
\begin{lem} 
\label{lem:blk_suff_dec}
There exists a constant~$\chi>0$ such that, for all~$k\in\left\{1,\ldots,K\right\}$,
\begin{align}
\label{eq:coor_dec}
m_k\left(x_k\right)-m_k\left(z_k\right)\geq\displaystyle\chi\frac{\left\|z_k-x_k\right\|_2}{\alpha_k}\min\left\{\Delta,\displaystyle\frac{1}{1+\left\|B(x)\right\|_2}\displaystyle\frac{\left\|z_k-x_k\right\|_2}{\alpha_k}\right\}\enspace.
\end{align}
\end{lem}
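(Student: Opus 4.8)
The plan is to reduce \eqref{eq:coor_dec} to the classical Cauchy-decrease estimate, with the scaled projected step $\left\|z_k-x_k\right\|_2/\alpha_k$ playing the role that the projected-gradient norm plays in \citep{burke1990}. If $z_k=x_k$ both sides of \eqref{eq:coor_dec} vanish, so I assume $z_k\neq x_k$. Write $g_k:=\nabla_km\left(z_{\left[\!\left[1,k-1\right]\!\right]},x_k,x_{\left[\!\left[k+1,K\right]\!\right]}\right)$ and recall that $m_k$ from \eqref{eq:def_mod_coor} is a quadratic in the $k$-th block whose Hessian is the diagonal block $B_{kk}(x)$ of $B(x)$, so $\left\|B_{kk}(x)\right\|_2\le\left\|B(x)\right\|_2$. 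The starting point is a lower bound on the block decrease that does not yet involve $\Delta$: the sufficient-decrease half of \eqref{eq:stp_small} gives $m_k(x_k)-m_k(z_k)\ge\nu_0\left<g_k,x_k-z_k\right>$, while the variational inequality for $z_k=P_{\Omega_k}(x_k-\alpha_kg_k)$ tested at $x_k\in\Omega_k$ gives $\left<g_k,x_k-z_k\right>\ge\alpha_k^{-1}\left\|z_k-x_k\right\|_2^2$. Combining, $m_k(x_k)-m_k(z_k)\ge\nu_0\alpha_k^{-1}\left\|z_k-x_k\right\|_2^2$, and after cancelling the common factor $\alpha_k^{-1}\left\|z_k-x_k\right\|_2$ it is enough to bound $\left\|z_k-x_k\right\|_2$ below by a uniform multiple of $\min\{\Delta,\ (1+\left\|B(x)\right\|_2)^{-1}\alpha_k^{-1}\left\|z_k-x_k\right\|_2\}$.

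The remaining task, and the main obstacle, is to prevent $\alpha_k$ from collapsing; this is precisely what \eqref{eq:stp_large_1}--\eqref{eq:stp_large_3} guarantee, so I would split along their three alternatives. If $\alpha_k\in[\nu_4,\nu_5]$, then $\alpha_k(1+\left\|B(x)\right\|_2)\ge\nu_4$ already forces $\left\|z_k-x_k\right\|_2\ge\nu_4(1+\left\|B(x)\right\|_2)^{-1}\alpha_k^{-1}\left\|z_k-x_k\right\|_2\ge\nu_4\min\{\Delta,\ldots\}$. If $\alpha_k\ge\nu_3\bar{\alpha}_k$ with $\bar{\alpha}_k$ satisfying \eqref{eq:stp_large_2}, observe first that $B_{kk}(x)\neq0$, for otherwise $m_k$ is affine and \eqref{eq:stp_small} holds at every step, contradicting \eqref{eq:stp_large_2}; then the descent lemma for the quadratic $m_k$ at $z_k(\bar{\alpha}_k)$, combined with the projection inequality at step $\bar{\alpha}_k$, yields $\bar{\alpha}_k>2(1-\nu_0)\left\|B_{kk}(x)\right\|_2^{-1}\ge2(1-\nu_0)\left\|B(x)\right\|_2^{-1}$, hence $\alpha_k(1+\left\|B(x)\right\|_2)\ge2\nu_3(1-\nu_0)$ and, as before, $\left\|z_k-x_k\right\|_2\ge2\nu_3(1-\nu_0)\min\{\Delta,\ldots\}$. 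If $\alpha_k\ge\nu_3\bar{\alpha}_k$ with $\bar{\alpha}_k$ satisfying \eqref{eq:stp_large_3}, i.e. $\left\|z_k(\bar{\alpha}_k)-x_k\right\|_\infty\ge\nu_1\Delta$, I would invoke the monotonicity of the projected path \citep{more1988,burke1990}: $\alpha\mapsto\left\|P_{\Omega_k}(x_k-\alpha g_k)-x_k\right\|_2$ is nondecreasing and $\alpha\mapsto\alpha^{-1}\left\|P_{\Omega_k}(x_k-\alpha g_k)-x_k\right\|_2$ is nonincreasing, so when $\alpha_k\le\bar{\alpha}_k$ one gets $\alpha_k^{-1}\left\|z_k-x_k\right\|_2\ge\bar{\alpha}_k^{-1}\left\|z_k(\bar{\alpha}_k)-x_k\right\|_2\ge\bar{\alpha}_k^{-1}\nu_1\Delta\ge\nu_3\alpha_k^{-1}\nu_1\Delta$, and when $\alpha_k>\bar{\alpha}_k$ one gets $\left\|z_k-x_k\right\|_2\ge\left\|z_k(\bar{\alpha}_k)-x_k\right\|_2\ge\nu_1\Delta$; either way $\left\|z_k-x_k\right\|_2\ge\nu_1\nu_3\Delta\ge\nu_1\nu_3\min\{\Delta,\ldots\}$ (using $\nu_3<1$, which holds by Lemma \ref{lem:bck}).

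Collecting the three cases, \eqref{eq:coor_dec} holds with $\chi:=\nu_0\min\{\nu_4,\ \nu_1\nu_3,\ 2\nu_3(1-\nu_0)\}$, which is independent of $k$ and of the current iterate. I expect the alternative \eqref{eq:stp_large_3} to be the delicate one: the rejected step $\bar{\alpha}_k$ need not lie on a fixed side of $\alpha_k$, and it is exactly the scalar monotonicity of the projected-step map that lets the bound $\left\|z_k(\bar{\alpha}_k)-x_k\right\|_\infty\ge\nu_1\Delta$ be transferred down to $\alpha_k$. The alternative \eqref{eq:stp_large_2} is routine once the affine degeneracy of $m_k$ is excluded, and \eqref{eq:stp_large_1} is immediate.
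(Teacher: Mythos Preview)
Your proof is correct and follows essentially the same route as the paper: start from the sufficient-decrease condition combined with the projection inequality to obtain $m_k(x_k)-m_k(z_k)\ge\nu_0\alpha_k^{-1}\left\|z_k-x_k\right\|_2^2$, and then split on the alternatives \eqref{eq:stp_large_1}--\eqref{eq:stp_large_3} to lower-bound $\alpha_k$ (or $\left\|z_k-x_k\right\|_2$) appropriately. Your treatment of the alternative \eqref{eq:stp_large_3} is more explicit than the paper's, which simply cites \citep{more1988} for the bound $\left\|z_k-x_k\right\|_2\ge\nu_3\nu_1\Delta$; your appeal to the monotonicity of $\alpha\mapsto\left\|P_{\Omega_k}(x_k-\alpha g_k)-x_k\right\|_2$ and of $\alpha\mapsto\alpha^{-1}\left\|P_{\Omega_k}(x_k-\alpha g_k)-x_k\right\|_2$ is exactly the mechanism behind that citation. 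Your constant $\chi=\nu_0\min\{\nu_4,\nu_1\nu_3,2\nu_3(1-\nu_0)\}$ carries the extra factor $\nu_3$ in the third term compared with the paper's $\chi=\nu_0\min\{\nu_4,2(1-\nu_0),\nu_3\nu_1\}$; your version is the one that actually follows from $\alpha_k\ge\nu_3\bar{\alpha}_k$ together with the bound on $\bar{\alpha}_k$, so this appears to be a minor slip in the paper rather than in your argument.
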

\begin{proof}
The proof goes along the same lines as the one of~Theorem~$4.3$ in\ \citep{more1988}.~Yet, some arguments differ, due to the alternating projections.~We first assume that condition\ \eqref{eq:stp_small} is satisfied with
\begin{align}
\nonumber
\alpha_k\geq\nu_4\enspace.
\end{align}
Using the basic property of the projection onto a closed convex set, we obtain
\begin{align}
m_k\left(x_k\right)-m_k\left(z_k\right)\geq\nu_0\nu_4\displaystyle\frac{\left\|z_k-x_k\right\|_2^2}{\alpha_k^2}\enspace.\nonumber
\end{align}
We then consider the second case when 
\begin{align}
\nonumber
\alpha_k\geq\nu_3\bar{\alpha}_k\enspace.
\end{align}
The first possibility is then 
\begin{align}
m_k\left(z_k\left(\bar{\alpha}_k\right)\right)-m_k\left(x_k\right)>\nu_0\left<\nabla{}m_k\left(x_k\right),z_k\left(\bar{\alpha}_k\right)-x_k\right>\enspace.\nonumber
\end{align}
However, by definition of the model function in~Eq.\ \eqref{line:ref_end}, the left-hand side term in the above inequality is equal to
\begin{align}
&\left<g_k\left(x\right),\bar{z}_k-x_k\right>+\displaystyle\frac{1}{2}\left<\bar{z}_k-x_k,E_kB\left(x\right)E_k^{\Trans}\left(\bar{z}_k-x_k\right)\right>\nonumber\\
&~~~~~~~~~~~~~~~~~~~~~~~~~~~~~~~~~~~~~~~~~~~~~~~~~~+\left<\bar{z}_{\left[\!\left[1,k-1\right]\!\right]}-x_{\left[\!\left[1,k-1\right]\!\right]},E_{\left[\!\left[1,k-1\right]\!\right]}B\left(x\right)E_k^{\Trans}\left(\bar{z}_k-x_k\right)\right>\nonumber\\
&=\displaystyle\frac{1}{2}\left<\bar{z}_k-x_k,E_kB\left(x\right)E_k^{\Trans}\left(\bar{z}_k-x_k\right)\right>+\left<\nabla{}m_k\left(x_k\right),\bar{z}_k-x_k\right>\enspace,\nonumber
\end{align} 
where, given~$k\in\left\{1,\ldots,K\right\}$, the matrix~$E_k\in\Rset^{n_k\times{}n}$ is such that for~$i\in\left\{1,\ldots,n_k\right\}$,
\begin{align}
\label{eq:def_E_k}
E_k(i,n_1+\ldots+n_{k-1}+i)=1\enspace,
\end{align}
and all other entries are zero.~This yields, by the~Cauchy-Schwarz inequality
\begin{align}
\displaystyle\frac{\left\|B\left(x\right)\right\|_2}{2}\left\|\bar{z}_k-x_k\right\|_2^2&>-\left(1-\nu_0\right)\left<\nabla{}m_k\left(x_k\right),\bar{z}_k-x_k\right>\nonumber\\
&\geq\displaystyle\frac{1-\nu_0}{\bar{\alpha}_k}\left\|\bar{z}_k-x_k\right\|_2^2\nonumber
\end{align}
Hence,
\begin{align}
\nonumber
\bar{\alpha}_k\geq\displaystyle\frac{2\left(1-\nu_0\right)}{1+\left\|B\left(x\right)\right\|_2}\enspace.
\end{align}
The second possibility is 
\begin{align}
\nonumber
\left\|\bar{z}_k-x_k\right\|_{\infty}\geq\nu_1\Delta\enspace.
\end{align}
For this case,\ \citep{more1988} provides the lower bound
\begin{align}
\nonumber
\left\|z_k-x_k\right\|_2\geq\nu_3\nu_1\Delta\enspace.
\end{align}
Finally, inequality\ \eqref{eq:coor_dec} holds with 
\begin{align}
\nonumber
\chi:=\nu_0\min\left\{\nu_4,2\left(1-\nu_0\right),\nu_3\nu_1\right\}\enspace.
\end{align}
\end{proof}
From Lemma\ \ref{lem:blk_suff_dec}, an estimate of the decrease in the model provided by the Cauchy point~$z$ is derived.
\begin{cor}[Sufficient decrease]
\label{cor:mod_suff_dec}
The following inequality holds
\begin{align}
\label{eq:mod_suff_dec}
&m\left(x\right)-m\left(z\right)\geq\chi\sum_{k=1}^K\displaystyle\frac{\left\|z_k-x_k\right\|_2}{\alpha_k}\min\left\{\Delta,\displaystyle\frac{1}{1+\left\|B\left(x\right)\right\|_2}\displaystyle\frac{\left\|z_k-x_k\right\|_2}{\alpha_k}\right\}\enspace.
\end{align}
\end{cor}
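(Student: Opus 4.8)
The plan is to exploit the block structure of the Cauchy step and reduce everything to Lemma~\ref{lem:blk_suff_dec}. First I would observe that, by the definition \eqref{eq:def_mod_coor} of the partial models $m_k$, one has $m_1\left(x_1\right)=m\left(x\right)$ and $m_K\left(z_K\right)=m\left(z\right)$, while the chaining identity $m_k\left(z_k\right)=m_{k+1}\left(x_{k+1}\right)$ noted just above Lemma~\ref{lem:blk_suff_dec} makes the sum over blocks telescope. Concretely,
\begin{align}
\nonumber
m\left(x\right)-m\left(z\right)=m_1\left(x_1\right)-m_K\left(z_K\right)=\sum_{k=1}^K\bigl(m_k\left(x_k\right)-m_k\left(z_k\right)\bigr)\enspace.
\end{align}

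Next I would bound each summand from below by invoking Lemma~\ref{lem:blk_suff_dec}, which applies verbatim to the $k$-th term with the same constant $\chi=\nu_0\min\left\{\nu_4,2\left(1-\nu_0\right),\nu_3\nu_1\right\}$ for every $k\in\left\{1,\ldots,K\right\}$, yielding
\begin{align}
\nonumber
m_k\left(x_k\right)-m_k\left(z_k\right)\geq\chi\displaystyle\frac{\left\|z_k-x_k\right\|_2}{\alpha_k}\min\left\{\Delta,\displaystyle\frac{1}{1+\left\|B\left(x\right)\right\|_2}\displaystyle\frac{\left\|z_k-x_k\right\|_2}{\alpha_k}\right\}\enspace.
\end{align}
Summing these $K$ inequalities and combining with the telescoping identity gives exactly \eqref{eq:mod_suff_dec}.

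There is essentially no genuine obstacle here, since the substantive work is already contained in Lemma~\ref{lem:blk_suff_dec} and the corollary is merely its aggregation over the colour groups. The single point that deserves care is the validity of the telescoping: it relies on the Cauchy point being built sequentially, block by block, so that when block $k$ is updated in Lines~\ref{line:ac_detec_beg}--\ref{line:ac_detec_end} of Algorithm~\ref{algo:dis_tr} the preceding blocks already carry the values $z_1,\ldots,z_{k-1}$ and the remaining blocks still carry $x_{k+1},\ldots,x_K$ --- precisely the configuration at which $m_k$ is evaluated. Within a single group the updates are parallel, but the groups themselves are processed one after another, so $m_k\left(z_k\right)=m_{k+1}\left(x_{k+1}\right)$ indeed holds and the argument goes through.
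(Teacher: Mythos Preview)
Your proposal is correct and matches the paper's own proof essentially verbatim: the paper simply writes $m(x)-m(z)=\sum_{k=1}^{K}\bigl(m_k(x_k)-m_k(z_k)\bigr)$ from the definition of $m_k$ and invokes Lemma~\ref{lem:blk_suff_dec} termwise. Your additional remarks on why the telescoping is legitimate are accurate but more detailed than what the paper provides.
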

\begin{proof}
This is a direct consequence of Lemma\ \ref{lem:blk_suff_dec} above, as
\begin{align}
m\left(x\right)-m\left(z\right)&=\sum_{k=1}^{K}m_k\left(x_k\right)-m_k\left(z_k\right)\enspace.\nonumber
\end{align}
from the definition of~$m_k$ in Eq.\ \eqref{eq:def_mod_coor}.
\end{proof}
\looseness-1In a similar manner to\ \citep{burke1990}, the level of criticality reached by the Cauchy point~$z$ is measured by the norm of the projected gradient of the objective, which can be upper bounded by the difference between the current iterate~$x$ and the Cauchy point~$z$.   
\begin{lem}[Relative error condition]
\label{lem:coor_opt_upbnd}
The following inequality holds
\begin{align}
\left\|\nabla_{\Omega}L\left(z\right)\right\|_2\leq&~K\left\|B\left(x\right)\right\|_2\left\|z-x\right\|_2+\sum_{k=1}^K\left(\displaystyle\frac{\left\|z_k-x_k\right\|_2}{\alpha_k}+\left\|g_k\left(z\right)-g_k\left(x\right)\right\|_2\right)\enspace.
\label{eq:up_bnd_crit}
\end{align}
\end{lem}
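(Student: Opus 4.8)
The plan is to split the target estimate into $K$ block contributions, using the product structure $\Omega=\Omega_1\times\cdots\times\Omega_K$ (Assumption~\ref{ass:part_sep}), and then to control each block term through the optimality condition that defines the Cauchy point $z_k$ together with Moreau's decomposition. Since $\Omega$ is a product of convex sets, the tangent cone factorises, $\Tcal_\Omega(z)=\Tcal_{\Omega_1}(z_1)\times\cdots\times\Tcal_{\Omega_K}(z_K)$, so $P_{\Tcal_\Omega(z)}$ acts separately on the blocks of $-g(z)=(-g_1(z),\ldots,-g_K(z))$, and
\[
\left\|\nabla_\Omega L(z)\right\|_2=\Big(\textstyle\sum_{k=1}^K\big\|P_{\Tcal_{\Omega_k}(z_k)}(-g_k(z))\big\|_2^2\Big)^{1/2}\le\sum_{k=1}^K\big\|P_{\Tcal_{\Omega_k}(z_k)}(-g_k(z))\big\|_2 .
\]
It therefore suffices to bound each summand by $\tfrac{\|z_k-x_k\|_2}{\alpha_k}+\|B(x)\|_2\|z-x\|_2+\|g_k(z)-g_k(x)\|_2$.

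Fix $k$, write $u^{(k)}:=(z_{\left[\!\left[1,k-1\right]\!\right]},x_k,x_{\left[\!\left[k+1,K\right]\!\right]})$ and $g^m_k:=\nabla_k m(u^{(k)})$. The Cauchy point $z_k=P_{\Omega_k}(x_k-\alpha_k g^m_k)$ is the minimiser over $\Omega_k$ of $z\mapsto\langle g^m_k,z-x_k\rangle+\tfrac{1}{2\alpha_k}\|z-x_k\|_2^2$, whose first-order optimality condition is $\tfrac{x_k-z_k}{\alpha_k}-g^m_k\in\Ncal_{\Omega_k}(z_k)$. Because $\Ncal_{\Omega_k}(z_k)$ and $\Tcal_{\Omega_k}(z_k)$ are mutually polar (convexity of $\Omega_k$), Moreau's decomposition applied to $-g_k(z)$ shows that $\big\|P_{\Tcal_{\Omega_k}(z_k)}(-g_k(z))\big\|_2$ equals the distance from $-g_k(z)$ to $\Ncal_{\Omega_k}(z_k)$; evaluating that distance against the particular normal vector $\tfrac{x_k-z_k}{\alpha_k}-g^m_k$ and using the triangle inequality gives $\big\|P_{\Tcal_{\Omega_k}(z_k)}(-g_k(z))\big\|_2\le\tfrac{\|z_k-x_k\|_2}{\alpha_k}+\|g^m_k-g_k(z)\|_2$.

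It then remains to compare $g^m_k$ with $g_k(x)$. From the model~\eqref{eq:model} the model gradient is $\nabla m(x')=g(x)+B(x)(x'-x)$, hence $g^m_k=g_k(x)+E_kB(x)(u^{(k)}-x)$ with $E_k$ the selection matrix of~\eqref{eq:def_E_k}. Since the $j$-th block of $u^{(k)}-x$ equals $z_j-x_j$ for $j<k$ and vanishes for $j\ge k$, we get $\|u^{(k)}-x\|_2\le\|z-x\|_2$ and therefore $\|g^m_k-g_k(x)\|_2\le\|B(x)\|_2\|z-x\|_2$ (with $\|B(x)\|_2$ bounded by Assumption~\ref{ass:bnd_hss_mod}). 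A final triangle inequality $\|g^m_k-g_k(z)\|_2\le\|B(x)\|_2\|z-x\|_2+\|g_k(z)-g_k(x)\|_2$ yields the desired block bound, and summing over $k=1,\ldots,K$ produces~\eqref{eq:up_bnd_crit}, the $K$ copies of $\|B(x)\|_2\|z-x\|_2$ giving the factor $K$.

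The main obstacle here is not any single estimate but the bookkeeping forced by the Gauss--Seidel sweep: $g^m_k$ is the model gradient at the \emph{partially updated} point $u^{(k)}$, so the perturbation $u^{(k)}-x$ collects all earlier block displacements $z_1-x_1,\ldots,z_{k-1}-x_{k-1}$ and must be bounded by $\|z-x\|_2$ rather than by $\|z_k-x_k\|_2$; one must also be careful to justify the block-wise factorisation of the projected gradient over the product set. Everything else reduces to Moreau's decomposition, non-expansiveness of the projection, and the uniform bound on $B$.
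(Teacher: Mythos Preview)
Your proof is correct and follows essentially the same route as the paper's: block-wise factorisation of the projected gradient via the product structure of $\Tcal_\Omega(z)$, the optimality condition $v_k:=\tfrac{x_k-z_k}{\alpha_k}-\nabla_k m(u^{(k)})\in\Ncal_{\Omega_k}(z_k)$ for the projection defining $z_k$, Moreau's decomposition to identify $\|P_{\Tcal_{\Omega_k}(z_k)}(-g_k(z))\|_2$ with the distance from $-g_k(z)$ to $\Ncal_{\Omega_k}(z_k)$, and a triangle inequality against the particular normal vector $v_k$. Your explicit bookkeeping of $u^{(k)}$ and the bound $\|u^{(k)}-x\|_2\le\|z-x\|_2$ makes the origin of the $\|B(x)\|_2\|z-x\|_2$ term slightly more transparent than in the paper, but the argument is the same.
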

\begin{proof}
From the definition of~$z_k$ as the projection of 
\begin{align}
x_k-\alpha_k\nabla{}m_k\left(x_k\right)\nonumber
\end{align}
onto the closed convex set~$\Omega_k$, there exists~$v_k\in\Ncal_{\Omega_k}\left(z_k\right)$ such that
\begin{align}
0&=v_k+\nabla{}m_k\left(x_k\right)+\displaystyle\frac{z_k-x_k}{\alpha_k}\enspace.\nonumber
\end{align}
Hence, 
\begin{align}
\left\|v_k+g_k\left(z\right)\right\|_2\leq&~\left\|g_k\left(z\right)-g_k\left(x\right)\right\|_2+\left\|B\left(x\right)\right\|_2\left\|z-x\right\|_2+\displaystyle\frac{\left\|z_k-x_k\right\|_2}{\alpha_k}\nonumber
\end{align}
However,
\begin{align}
\nonumber
\left\|P_{\Ncal_{\Omega_k}\left(z_k\right)}\left(-g_k\left(z\right)\right)+g_k\left(z\right)\right\|_2\leq\left\|v_k+g_k\left(z\right)\right\|_2\enspace,\nonumber
\end{align}
and by Moreau's decomposition theorem,
\begin{align}
-g_k\left(z\right)=P_{\Ncal_{\Omega_k}\left(z_k\right)}\left(-g_k\left(z\right)\right)+P_{\Tcal_{\Omega_k}\left(z_k\right)}\left(-g_k\left(z\right)\right)\enspace.\nonumber
\end{align}
Thus,
\begin{align}
\left\|P_{\Tcal_{\Omega_k}\left(z_k\right)}\left(-g_k\left(z\right)\right)\right\|_2\leq\left\|g_k\left(z\right)-g_k\left(x\right)\right\|_2+\left\|B\left(x\right)\right\|_2\left\|z-x\right\|_2+\displaystyle\frac{\left\|z_k-x_k\right\|_2}{\alpha_k}\enspace.\nonumber
\end{align}
As the sets~$\left\{\Omega_k\right\}_{k=1}^K$ are closed and convex, 
\begin{align}
\Tcal_{\Omega}\left(z\right)=\Tcal_{\Omega_1}\left(z_1\right)\times\ldots\times\Tcal_{\Omega_K}\left(z_K\right)\enspace.\nonumber
\end{align}
Subsequently,
\begin{align}
\left\|\nabla_\Omega{}L\left(z\right)\right\|_2\leq\sum_{k=1}^K\left\|P_{\Tcal_{\Omega_k}\left(z_k\right)}\left(-g_k\left(z\right)\right)\right\|_2\nonumber
\end{align}
and inequality\ \eqref{eq:up_bnd_crit} follows.
\end{proof}
\looseness-1Based on the estimate of the model decrease\ \eqref{eq:mod_suff_dec} and the relative error bound\ \eqref{eq:up_bnd_crit} at the Cauchy point~$z$, one can follow the standard proof mechanism of trust region methods quite closely\ \citep{burke1990}.~Most of the steps are proven by contradiction, assuming that criticality is not reached.~The nature of the model decrease\ \eqref{eq:mod_suff_dec} is well-suited to this type of reasoning.~Hence, most of the ideas of\ \citep{burke1990} can be adapted to our setting.
\begin{lem}
\label{lem:lim_inf}
If Assumptions\ \ref{ass:bnd_blw},\ \ref{ass:smooth} and\ \ref{ass:bnd_hss_mod} are satisfied, then the sequence of iterates yielded by Algorithm\ \ref{algo:dis_tr} satisfies that for all~$k\in\left\{1,\ldots,K\right\}$,
\begin{align}
\label{eq:lim_inf}
\liminf\displaystyle\frac{\left\|z_k-x_k\right\|_2}{\alpha_k}=0\enspace,
\end{align}
\end{lem}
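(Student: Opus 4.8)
The plan is to argue by contradiction, following the classical ``$\liminf$'' step of trust region convergence theory and adapting it to the alternating-projections Cauchy point. Fix $k\in\{1,\ldots,K\}$ and suppose the claim fails, so that there are $\varepsilon>0$ and an index $l_0$ with $\|z_k^l-x_k^l\|_2/\alpha_k^l\geq\varepsilon$ for every iteration $l\geq l_0$ of Algorithm~\ref{algo:dis_tr}. Since each block term in Corollary~\ref{cor:mod_suff_dec} is nonnegative by Lemma~\ref{lem:blk_suff_dec}, retaining only the $k$-th one and using $\|B(x^l)\|_2\leq\hat{B}$ (Assumption~\ref{ass:bnd_hss_mod}) gives
\[
m(x^l)-m(z^l)\ \geq\ \chi\,\varepsilon\min\Bigl\{\Delta^l,\tfrac{\varepsilon}{1+\hat{B}}\Bigr\},\qquad l\geq l_0,
\]
whence the refinement requirement $m(x^l)-m(y^l)\geq\gamma_1\,(m(x^l)-m(z^l))$ at Line~\ref{line:ref_beg} yields $m(x^l)-m(y^l)\geq\gamma_1\chi\varepsilon\min\{\Delta^l,\varepsilon/(1+\hat{B})\}$ for all $l\geq l_0$.

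The technical heart is to show that, under this hypothesis, the radius stays bounded away from zero. For this I would bound $|\rho^l-1|=|L(y^l)-m(y^l)|\,/\,(m(x^l)-m(y^l))$: writing $L(y^l)-m(y^l)$ as the second-order Taylor remainder of $L$ at $x^l$ and invoking the \emph{uniform} continuity of $\nabla L$ (Assumption~\ref{ass:smooth}), which supplies a modulus $\omega$ with $\omega(s)\to0$ as $s\downarrow0$ independently of the base point, together with $\|B(x^l)\|_2\leq\hat{B}$ and the step bound $\|y^l-x^l\|_\infty\leq\gamma_2\Delta^l$ (so $\|y^l-x^l\|_2\leq\sqrt{n}\,\gamma_2\Delta^l$), one obtains
\[
|L(y^l)-m(y^l)|\ \leq\ \sqrt{n}\,\gamma_2\Delta^l\Bigl(\omega(\sqrt{n}\,\gamma_2\Delta^l)+\tfrac{1}{2}\hat{B}\sqrt{n}\,\gamma_2\Delta^l\Bigr).
\]
Dividing by the model-decrease lower bound and noting that the minimum equals $\Delta^l$ once $\Delta^l\leq\varepsilon/(1+\hat{B})$ shows $|\rho^l-1|\to0$ as $\Delta^l\downarrow0$; hence there is $\bar{\Delta}\in\bigl(0,\varepsilon/(1+\hat{B})\bigr]$ such that any iteration $l\geq l_0$ with $\Delta^l\leq\bar{\Delta}$ is very successful. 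Because a very successful iteration never shrinks the radius ($\Delta^{l+1}\geq\Delta^l$) while every iteration satisfies $\Delta^{l+1}\geq\sigma_1\Delta^l$, a one-line induction then gives $\Delta^l\geq\min\{\Delta^{l_0},\sigma_1\bar{\Delta}\}>0$ for all $l\geq l_0$.

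To conclude I split on the number of successful iterations beyond $l_0$. If there are infinitely many, then for each one $L(x^l)-L(x^{l+1})\geq\eta_1\,(m(x^l)-m(y^l))\geq\eta_1\gamma_1\chi\varepsilon\min\{\Delta^{l_0},\sigma_1\bar{\Delta}\}$, a fixed positive constant (here I use $\sigma_1\bar\Delta<\varepsilon/(1+\hat B)$ so the outer minimum simplifies); but $\{L(x^l)\}$ is non-increasing and, by Assumption~\ref{ass:bnd_blw}, bounded below on $\{x\in\Omega:L(x)\leq L(x^0)\}$, which contains the whole sequence, so it cannot decrease by a fixed positive amount infinitely often --- a contradiction. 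If instead only finitely many iterations are successful, then beyond the last one every iteration is unsuccessful, so $\Delta^{l+1}\leq\sigma_2\Delta^l$ with $\sigma_2<1$ forces $\Delta^l\to0$, contradicting the lower bound just established. Either way we reach a contradiction, so $\liminf_l\|z_k^l-x_k^l\|_2/\alpha_k^l=0$, and since $k$ was arbitrary the lemma follows. The one genuinely delicate point is the radius-boundedness step: it is where uniform (rather than Lipschitz) continuity of $\nabla L$ must be used, through a modulus-of-continuity form of the Taylor remainder, and where one must verify that the $\min$ in the model-decrease estimate collapses to $\Delta^l$ for small radii before the ratio $|\rho^l-1|$ can be driven below $1-\eta_2$.
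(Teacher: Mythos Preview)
Your argument is correct and is precisely the ``standard proof mechanism of trust region methods'' that the paper invokes by reference to \citep{burke1990}: the paper's own proof merely states the contradiction hypothesis, cites Corollary~\ref{cor:mod_suff_dec}, and defers the remaining steps to the literature, while you have written them out in full. The only cosmetic difference is that the paper assumes the lower bound holds for all $l\geq 1$ rather than $l\geq l_0$, which is immaterial.
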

\begin{proof}
For the sake of contradiction, assume that there exists a block index~$k_0\in\left\{1,\ldots,K\right\}$ and~$\epsilon>0$ such that
\begin{align}
\displaystyle\frac{\left\|z_{k_0}^l-x_{k_0}^l\right\|_2}{\alpha_{k_0}^l}\geq\epsilon\nonumber
\end{align}
for all iteration indices~$l\geq1$.~Using Corollary\ \ref{cor:mod_suff_dec}, the standard proof mechanism of trust region methods\ \citep{burke1990} can be easily adapted to obtain\ \eqref{eq:lim_inf}. 
\end{proof}
\looseness-1We are now ready to state the main Theorem of this section.~It is claimed that all limit points of the sequence~$\left\{x^l\right\}$ generated by\ \textsc{trap} are critical points of\ \eqref{eq:bnd_nlp}.
\begin{theo}[Limit points are critical points]
\label{th:cauch_cv}
Assume that Assumptions\ \ref{ass:bnd_blw},\ \ref{ass:smooth} and\ \ref{ass:bnd_hss_mod} hold.~If~$x^{\ast}$ is a limit point of~$\left\{x^l\right\}$, then there exists a subsequence~$\left\{l_i\right\}$ such that
\begin{align}
\label{eq:cv_crit}
\left\{ 
\begin{aligned}
&\lim_{i\rightarrow+\infty}\left\|\nabla_{\Omega}L\left(z^{l_i}\right)\right\|_2=0\\
&z^{l_i}\rightarrow{}x^{\ast}
\end{aligned}\right.\enspace.
\end{align}
Moreover,~$\nabla_{\Omega}L\left(x^\ast\right)=0$, meaning that~$x^{\ast}$ is a critical point of~$L+\iota_\Omega$.
\end{theo}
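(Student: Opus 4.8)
The plan is to follow the trust-region global convergence scheme of~\citep{burke1990}, but to carry it out with a criticality measure regular enough to control arbitrary limit points. Write $\mu_k^l:=\left\|z_k^l-x_k^l\right\|_2/\alpha_k^l$, and take as criticality measure the quantity attached to~\eqref{eq:crit_def_1}, $\pi(x):=\left\|P_\Omega\left(x-g(x)\right)-x\right\|_2$; since $P_\Omega$ is non-expansive and $\nabla L$ is uniformly continuous (Assumption~\ref{ass:smooth}), $\pi$ is \emph{uniformly} continuous. I will use two routine facts: (a) from the step-size bounds~\eqref{eq:stp_large_1}, the non-expansiveness of the blockwise projections, and the estimate $\left\|\nabla_k m_k(x_k^l)-g_k(x^l)\right\|_2\leq\hat B\left\|z^l-x^l\right\|_2$ afforded by Assumption~\ref{ass:bnd_hss_mod}, the quantities $\sum_{k=1}^K\mu_k^l$ and $\pi(x^l)$ are comparable up to multiplicative constants depending only on $K$, $\hat B$ and the step-size bounds; and (b) $\left\|z^l-x^l\right\|_2\leq\nu_5\sum_k\mu_k^l$, so that whenever $\sum_k\mu_k^l\to 0$ along a subsequence, Lemma~\ref{lem:coor_opt_upbnd}, Assumption~\ref{ass:bnd_hss_mod} and the uniform continuity of $\nabla L$ yield $\left\|\nabla_\Omega L(z^l)\right\|_2\to 0$ along it. The first step is to establish $\liminf_l\sum_{k=1}^K\mu_k^l=0$: this is the argument behind Lemma~\ref{lem:lim_inf} run with the whole sum — if $\sum_k\mu_k^l\geq\epsilon$ for all large $l$, a pigeonhole argument and Corollary~\ref{cor:mod_suff_dec} give a model decrease bounded below by $\chi(\epsilon/K)\min\{\Delta^l,\epsilon/(K(1+\hat B))\}$, hence an objective decrease bounded below by $\eta_1\gamma_1$ times that on every successful iteration; summing, Assumption~\ref{ass:bnd_blw} forces $\Delta^l\to 0$, after which the model-accuracy estimate (using $\left\|B\right\|_2\leq\hat B$ and uniform continuity of $\nabla L$) makes small-radius iterations very successful, a contradiction.

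The crux will be to upgrade this to $\lim_l\pi(x^l)=0$. Suppose it fails; by the liminf just obtained and fact~(a) there is $\epsilon>0$ with $\pi(x^l)\geq 2\epsilon$ for infinitely many $l$ and $\pi(x^l)<\epsilon$ for infinitely many $l$. For each $j$ with $\pi(x^j)\geq 2\epsilon$ let $p(j)>j$ be the first index with $\pi(x^{p(j)})<\epsilon$, so that $\pi(x^i)\geq\epsilon$, hence (fact~(a)) $\sum_k\mu_k^i\geq c_0(\epsilon)>0$, for $j\leq i<p(j)$; on those iterations Corollary~\ref{cor:mod_suff_dec} yields a model decrease bounded below by $c_1(\epsilon)\min\{\Delta^i,c_2(\epsilon)\}$, so on successful iterations $\left\|x^{i+1}-x^i\right\|_\infty\leq\gamma_2\Delta^i\leq c_3(\epsilon)\bigl(L(x^i)-L(x^{i+1})\bigr)$ once $\Delta^i\leq c_2(\epsilon)$ (the iterations with $\Delta^i>c_2(\epsilon)$ each cost a fixed decrease, occur finitely often, and may be set aside). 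Since $\{L(x^l)\}$ is non-increasing and bounded below by Assumption~\ref{ass:bnd_blw}, $L(x^j)-L(x^{p(j)})\to 0$, and telescoping along each excursion gives $\left\|x^j-x^{p(j)}\right\|_\infty\to 0$; uniform continuity of $\pi$ then forces $\pi(x^j)-\pi(x^{p(j)})\to 0$, contradicting $\pi(x^j)-\pi(x^{p(j)})>\epsilon$. Hence $\lim_l\pi(x^l)=0$, and by fact~(a), $\lim_l\sum_k\mu_k^l=0$.

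To finish, let $x^\ast$ be any limit point of $\{x^l\}$ and choose $\{l_i\}$ with $x^{l_i}\to x^\ast$. By fact~(b), $\left\|z^{l_i}-x^{l_i}\right\|_2\leq\nu_5\sum_k\mu_k^{l_i}\to 0$, hence $z^{l_i}\to x^\ast$, and Lemma~\ref{lem:coor_opt_upbnd} gives $\left\|\nabla_\Omega L(z^{l_i})\right\|_2\to 0$; this is~\eqref{eq:cv_crit}. Finally $\pi(x^\ast)=\lim_i\pi(x^{l_i})=0$ by continuity of $\pi$, i.e.\ $P_\Omega\bigl(x^\ast-g(x^\ast)\bigr)=x^\ast$, which by Moreau's decomposition is equivalent to $\nabla_\Omega L(x^\ast)=0$; so $x^\ast$ is a critical point of $L+\iota_\Omega$.

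The hard part will be the second paragraph, the passage from $\liminf$ to $\lim$: the projected-gradient measure $\nabla_\Omega L(\cdot)$ that appears in Lemma~\ref{lem:coor_opt_upbnd} is only lower semicontinuous, so the excursion/telescoping argument cannot be run with it and must use the continuous surrogate $\pi$ instead, its validity relying precisely on the \emph{uniform} continuity of $\nabla L$ posited in Assumption~\ref{ass:smooth} (the iterates are not assumed to remain in a bounded set). A secondary difficulty is the comparability of fact~(a): because the Cauchy point is produced by a Gauss--Seidel sweep, block $k$ sees the partially updated gradient $\nabla_k m_k$ rather than $g_k(x^l)$, and the resulting $O\!\bigl(\hat B\sum_{m<k}\mu_m^l\bigr)$ discrepancy has to be propagated through all $K$ blocks — it is here that the analysis genuinely departs from~\citep{burke1990}.
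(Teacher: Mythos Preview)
Your excursion argument in the second paragraph is sound and, using only the direction ``$\sum_k\mu_k^l$ small $\Rightarrow$ $\pi(x^l)$ small'' of fact~(a), it does establish $\lim_l\pi(x^l)=0$; by continuity of $\pi$ this already yields the ``Moreover'' clause. The gap is in the \emph{other} direction of fact~(a), which you invoke at the end of the second paragraph to pass from $\lim_l\pi(x^l)=0$ to $\lim_l\sum_k\mu_k^l=0$. That direction fails whenever the trust-region constraint $\|z_k-x_k\|_\infty\leq\nu_2\Delta$ is what forces $\alpha_k$ to be small (the second alternative in~\eqref{eq:stp_large_1} with~\eqref{eq:stp_large_3}). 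Concretely, take $K=1$, $\Omega=[0,1]$, $x=\epsilon>0$, $g=1$ and $\Delta\ll\epsilon/\nu_2$: backtracking drives $\alpha$ down to roughly $\nu_2\Delta<\epsilon$, so $z=x-\alpha$ and $\mu=\|z-x\|/\alpha=1$, while $\pi(x)=\|P_\Omega(x-g)-x\|=\epsilon$. What the monotonicity of $t\mapsto\|P_{\Omega_k}(x_k-t d_k)-x_k\|/t$ does give is $\mu_k^l\leq\|P_{T_{\Omega_k}(x_k^l)}(-d_k)\|$, hence after propagating the Gauss--Seidel correction $\sum_k\mu_k^l\lesssim\|\nabla_\Omega L(x^l)\|$; but this is precisely the lower-semicontinuous measure you introduced $\pi$ to avoid, and it cannot be controlled by $\pi$. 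Consequently the conclusions $z^{l_i}\to x^\ast$ and $\|\nabla_\Omega L(z^{l_i})\|\to 0$ in your third paragraph do not follow from your argument.

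The paper avoids this by never attempting the whole-sequence upgrade. It fixes a subsequence with $x^{l_i}\to x^\ast$, supposes for contradiction that $\mu_{k_0}^{l_i}\geq\epsilon$ along it, and runs the Burke--Mor\'e--Toraldo Theorem~5.4 argument \emph{on that convergent subsequence}, using Corollary~\ref{cor:mod_suff_dec} for the model decrease and the monotone convergence $L(x^l)\downarrow L(x^\ast)$ in place of any continuity property of a criticality measure; the conclusion $\mu_k^{l_i}\to 0$ for all $k$ then gives~\eqref{eq:cv_crit} via Lemma~\ref{lem:coor_opt_upbnd} and $\alpha_k\leq\nu_5$. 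Your route is salvageable: keep your proof of $\lim_l\pi(x^l)=0$ (it is a genuine strengthening), but for~\eqref{eq:cv_crit} drop the appeal to the bad direction of fact~(a) and argue by contradiction on the given convergent subsequence as the paper does.
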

\begin{proof}
Let~$\left\{x^{l_i}\right\}$ be a subsequence of~$\left\{x^l\right\}$ such that~$x^{l_i}\rightarrow{}x^{\ast}$.~If for all~$k\in\left\{1,\ldots,K\right\}$ 
\begin{align}
\label{eq:sub_liminf}
\displaystyle\frac{\left\|z_k^{l_i}-x_k^{l_i}\right\|_2}{\alpha_{k}^{l_i}}\rightarrow0\enspace,
\end{align}
then the proof is complete, via Lemma\ \ref{lem:coor_opt_upbnd} and the fact that the step-sizes~$\alpha_k$ are upper bounded by~$\nu_5$.~In order to show\ \eqref{eq:sub_liminf}, given~$\epsilon>0$ one can assume that there exists~$k_0\in\left\{1,\ldots,K\right\}$ such that for all~$i\geq1$, $\displaystyle\nicefrac{\left\|z_{k_0}^{l_i}-x_{k_0}^{l_i}\right\|_2}{\alpha_{k_0}^{l_i}}\geq\epsilon$.~One can then easily combine the arguments in the proof of Theorem~$5.4$ in\ \citep{burke1990} with Corollary\ \ref{cor:mod_suff_dec} and Lemma\ \ref{lem:coor_opt_upbnd} in order to obtain\ \eqref{eq:cv_crit}.
\end{proof}
Theorem\ \ref{th:cauch_cv} above proves that all limit points of the sequence~$\left\{x^l\right\}$ generated by\ \textsc{trap} are critical points.~It does not actually claim convergence of~$\left\{x^l\right\}$ to a single critical point.~However, such a result can be obtained under standard regularity assumptions\ \citep{noce2006}, which ensure that a critical point is an isolated local minimum.
\begin{ass}[Strong second-order optimality condition]
\label{ass:reg_lim_pt}
The sequence~$\left\{x^l\right\}$ yielded by\ \textsc{trap} has a non-degenerate limit point~$x^{\ast}$ such that for all~$v\in\Ncal_{\Omega}\left(x^{\ast}\right)^{\perp}$, where
\begin{align}
\label{eq:def_nc_perp}
\Ncal_{\Omega}\left(x^{\ast}\right)^{\perp}:=\left\{v\in\Rset^n~:~\forall{}w\in\Ncal_{\Omega}\left(x^{\ast}\right),~\right<w,v\left>=0\right\}\enspace,
\end{align}
one has
\begin{align}
\label{eq:ssoc}
\left<v,H\left(x^{\ast}\right)v\right>\geq\kappa\left\|v\right\|_2^2\enspace,
\end{align}
where~$\kappa>0$.
\end{ass}
\begin{theo}[Convergence to first-order critical points]
\label{th:full_cv}
If Assumptions\ \ref{ass:bnd_hss_mod},\ \ref{ass:bnd_blw},\ \ref{ass:smooth} and\ \ref{ass:reg_lim_pt} are fulfilled, then the sequence~$\left\{x^l\right\}$ generated by\ \textsc{trap} converges to a non-degenerate critical point~$x^{\ast}$ of~$L+\iota_\Omega$. 
\end{theo}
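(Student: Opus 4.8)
The plan is to combine the global convergence result (Theorem~\ref{th:cauch_cv}) with the regularity Assumption~\ref{ass:reg_lim_pt} to upgrade "every limit point is critical" into "the whole sequence converges". The standard route is to show that the non-degenerate limit point $x^{\ast}$ whose existence is postulated in Assumption~\ref{ass:reg_lim_pt} is in fact an \emph{isolated} critical point, and that the trust region mechanism prevents the iterates from escaping a neighbourhood of $x^{\ast}$ once they enter it. First I would establish isolation: under non-degeneracy ($-g(x^{\ast})\in\ri(\Ncal_{\Omega}(x^{\ast}))$) the active set is locally stable, so near $x^{\ast}$ the problem reduces to an equality-constrained problem on the face through $x^{\ast}$; on that face the reduced Hessian is positive definite by \eqref{eq:ssoc}, so $x^{\ast}$ is a strict local minimiser and, by a standard argument (e.g.\ the implicit function theorem applied to the KKT system, or directly from strong convexity of the reduced objective on a ball), the \emph{unique} critical point of $L+\iota_\Omega$ in some ball $\Bcal(x^{\ast},r)$.

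Next I would argue that the iterates cannot keep re-entering and leaving $\Bcal(x^{\ast},r)$. From Theorem~\ref{th:cauch_cv} we already know $x^{\ast}$ is a limit point, so infinitely many $x^{l_i}$ lie arbitrarily close to $x^{\ast}$, and along a further subsequence $\|\nabla_\Omega L(z^{l_i})\|_2\to0$ with $z^{l_i}\to x^{\ast}$. Using the sufficient decrease estimate of Corollary~\ref{cor:mod_suff_dec} together with the trust region update rules (Lines~\ref{line:test_beg}--\ref{line:test_end} of Algorithm~\ref{algo:dis_tr}) and Assumption~\ref{ass:bnd_blw}, one shows in the usual way that the successful step sizes $\|x^{l+1}-x^l\|_\infty\le\gamma_2\Delta^l$ are summable over any stretch where criticality is bounded away from zero is \emph{not} the right phrasing — rather, near $x^{\ast}$ the monotone decrease $L(x^l)\le L(x^0)$ combined with the strict local minimality of $x^{\ast}$ confines the sublevel set component containing $x^{\ast}$ to a small ball; once $x^{l}$ enters the region where $L(x^l)-L(x^{\ast})$ is small enough, the iterates are trapped in that connected sublevel component, which by isolation contains no other critical point. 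Then Theorem~\ref{th:cauch_cv} applied to this trapped tail forces every limit point of the tail to be critical, hence equal to $x^{\ast}$, giving $x^l\to x^{\ast}$.

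The details I would fill in are: (i) a ``capture'' lemma showing that for $r$ small enough, if $x^{l_0}\in\Bcal(x^{\ast},r/2)$ and $\Delta^{l_0}$ is small, then $x^l\in\Bcal(x^{\ast},r)$ for all $l\ge l_0$ — this uses that an unsuccessful step does not move $x$ and shrinks $\Delta$, while a successful step moves $x$ by at most $\gamma_2\Delta^l$ and $\sum\Delta^l$ over successful steps inside the ball is controlled by the total objective decrease $L(x^{l_0})-\inf L<\infty$ via Corollary~\ref{cor:mod_suff_dec}; (ii) verifying that the subsequence of Theorem~\ref{th:cauch_cv} does enter such a ball with small enough radius, which follows since $x^{l_i}\to x^{\ast}$ and, along the good subsequence, the Cauchy-point displacement (hence, by the refinement bound $\|y-x\|_\infty\le\gamma_2\Delta$, the trust region radius cannot stay large while criticality vanishes). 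The main obstacle is step~(i): making the capture argument rigorous requires carefully relating $\sum_l\Delta^l$ (over the relevant successful iterations) to the model decrease, which in turn needs a lower bound on $m(x^l)-m(z^l)$ in terms of $\Delta^l$ near a non-critical point — exactly the kind of estimate that Corollary~\ref{cor:mod_suff_dec} and Lemma~\ref{lem:coor_opt_upbnd} are built to supply, but the bookkeeping between the Cauchy displacement $\|z_k-x_k\|_2/\alpha_k$, the step $\|y-x\|$, and $\Delta$ has to be done with some care. I expect the rest (isolation of $x^{\ast}$, and the final contradiction argument) to be routine given Theorem~\ref{th:cauch_cv} and Assumption~\ref{ass:reg_lim_pt}.
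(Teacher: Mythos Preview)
Your strategy is sound and is, in essence, the classical isolation-plus-capture argument: non-degeneracy together with the strong second-order condition~\eqref{eq:ssoc} makes $x^{\ast}$ a strict local minimiser and hence an isolated critical point, and then monotonicity of $\{L(x^l)\}$ combined with a step-size control traps the iterates near $x^{\ast}$, after which Theorem~\ref{th:cauch_cv} forces convergence of the whole sequence. The pieces you single out as delicate (the capture lemma and the bookkeeping between $\|z_k-x_k\|_2/\alpha_k$, $\|y-x\|$, and $\Delta$) are exactly the right ones, and Corollary~\ref{cor:mod_suff_dec} and Lemma~\ref{lem:coor_opt_upbnd} do provide the estimates needed to close them.

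The paper, however, does none of this explicitly. Its entire proof is a one-line invocation of Corollary~6.7 in~\citep{burke1990}. The point is that the preceding results in the paper --- the sufficient-decrease estimate of Corollary~\ref{cor:mod_suff_dec} and the relative-error bound of Lemma~\ref{lem:coor_opt_upbnd} --- were tailored precisely so that \textsc{trap} satisfies the abstract hypotheses of the Burke--Mor\'e--Toraldo framework; once that is done, their full-sequence convergence corollary applies verbatim and there is nothing left to reprove. What you have written is, in effect, a sketch of how one would establish that corollary from scratch. That is perfectly legitimate, and arguably more self-contained, but it is considerably more work than what the paper actually does, and it duplicates an argument already available in the cited reference.
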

\begin{proof}
\looseness-1This is an immediate consequence of Corollary~$6.7$ in\ \citep{burke1990}.
\end{proof}
\subsection{Active-set Identification}
\label{subsec:activ}
\looseness-1In most of the trust region algorithms for constrained optimisation, the Cauchy point acts as a predictor of the set of active constraints at a critical point.~Therefore, a desirable feature of the novel Cauchy point computation in\ \textsc{trap} is finite detection of activity, meaning that the active set at the limit point is identified after a finite number of iterations.~In this paragraph, we show that\ \textsc{trap} is equivalent to the standard projected search in terms of identifying the active set at the critical point~$x^{\ast}$ defined in Theorem\ \ref{th:full_cv}.
\begin{lem}
\label{lem:face}
Given a face~$\Fcal$ of~$\Omega$, there exists faces~$\Fcal_1,\ldots,\Fcal_K$ of~$\Omega_1,\ldots,\Omega_K$ respectively, such that~$\Fcal=\Fcal_1\times\ldots\times\Fcal_K$.
\end{lem}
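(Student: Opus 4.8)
The plan is to use the product structure of $\Omega = \Omega_1 \times \ldots \times \Omega_K$ together with the characterisation of faces of a convex set via supporting hyperplanes, and to exploit the fact that for a Cartesian product, every supporting functional decomposes blockwise. First I would recall that a face $\Fcal$ of a convex set $\Omega$ can be described as the set of points of $\Omega$ on which a fixed subset of the defining inequality constraints is active; since $\Omega$ is polyhedral by Assumption~\ref{ass:poly_set}, with $\Omega_k = \{x \in \Rset^{n_k} : \langle \omega_{k,i}, x \rangle \leq h_{k,i},\ i \in \{1,\ldots,m_k\}\}$, the full constraint set of $\Omega$ is the union (over $k$) of the constraints of $\Omega_k$, each acting only on the block $x_k$. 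A face $\Fcal$ of $\Omega$ is obtained by selecting, for each $k$, a subset $I_k \subseteq \{1,\ldots,m_k\}$ of indices to be held active, i.e.\ $\Fcal = \{x \in \Omega : \langle \omega_{k,i}, x_k\rangle = h_{k,i},\ i \in I_k,\ k \in \{1,\ldots,K\}\}$.

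Next I would set $\Fcal_k := \{x_k \in \Omega_k : \langle \omega_{k,i}, x_k \rangle = h_{k,i},\ i \in I_k\}$ for each $k$, and observe that each $\Fcal_k$ is by construction a face of $\Omega_k$ (it is the intersection of $\Omega_k$ with a supporting affine subspace obtained by summing the active inequalities, or more directly, it is the set of minimisers over $\Omega_k$ of the linear functional $\langle \sum_{i \in I_k} \omega_{k,i}, \cdot \rangle$). The key identity $\Fcal = \Fcal_1 \times \ldots \times \Fcal_K$ then follows because membership in $\Fcal$ is the conjunction of the conditions $x_k \in \Omega_k$ and $\langle \omega_{k,i}, x_k \rangle = h_{k,i}$ for $i \in I_k$, each of which involves only the single block $x_k$; so $x \in \Fcal$ if and only if $x_k \in \Fcal_k$ for every $k$. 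One should also check the converse direction is not needed—the statement only asserts existence of such $\Fcal_k$—but it is worth noting $\Fcal$ is nonempty whenever it is a genuine face, which guarantees each $\Fcal_k$ is nonempty as well (project $\Fcal$ onto block $k$).

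The main obstacle, such as it is, is being careful about which notion of ``face'' is in force: the standard definition (a convex subset $\Fcal \subseteq \Omega$ such that any segment in $\Omega$ whose relative interior meets $\Fcal$ lies entirely in $\Fcal$) versus the exposed-face / active-index description, and making sure these coincide for polyhedra so that the blockwise active-index bookkeeping is legitimate. Once that equivalence is in hand (it is classical for polyhedral sets), the argument is essentially a bookkeeping verification that a blockwise-separable system of linear equalities and inequalities cuts out a product set. I would therefore structure the proof as: (i) invoke the polyhedral face characterisation to write $\Fcal$ via active index sets $I_1,\ldots,I_K$; (ii) define $\Fcal_k$ by the same active sets restricted to block $k$ and check each is a face of $\Omega_k$; (iii) conclude $\Fcal = \prod_k \Fcal_k$ by unwinding the definitions, using that no constraint couples distinct blocks.
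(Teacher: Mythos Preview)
The paper states Lemma~\ref{lem:face} without proof, so there is no argument to compare against. Your proposal is correct and is the standard route: use Assumption~\ref{ass:poly_set} to write each $\Omega_k$ via linear inequalities, invoke the classical fact that every face of a polyhedron is obtained by fixing some subset of the defining inequalities to equality, observe that the constraint system of $\Omega=\prod_k\Omega_k$ is block-separable so the active index set splits as $I_1,\ldots,I_K$, and define $\Fcal_k$ accordingly.

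One minor caution: your parenthetical claim that $\Fcal_k$ equals the set of minimisers over $\Omega_k$ of the linear functional $\langle\sum_{i\in I_k}\omega_{k,i},\cdot\rangle$ is not quite safe in general (redundant or linearly dependent constraints can make that minimiser set strictly larger than $\Fcal_k$). You do not need this claim, however, since the direct verification that $\{x_k\in\Omega_k:\langle\omega_{k,i},x_k\rangle=h_{k,i},\ i\in I_k\}$ is a face of $\Omega_k$ follows immediately from the segment definition of a face applied to each active inequality. With that adjustment, steps (i)--(iii) of your plan go through without issue.
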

\begin{rk}
Given a point~$x\in\Omega$, there exists a face~$\Fcal$ of~$\Omega$ such that~$x\in\ri\left(\Fcal\right)$.~The normal cone to~$\Omega$ at~$x$ is the cone generated by the normal vectors to the active constraints at~$x$.~As the set of active constraints is constant on the relative interior of a face, one can write without distinction~$\Ncal_\Omega\left(x\right)~\text{or}~\Ncal\left(\Fcal\right)$.
\end{rk}
\looseness-1The following Lemma is similar in nature to Lemma $7.1$ in\ \citep{burke1990}, yet with an adaptation in order to account for the novel way of computing the Cauchy point.~In particular, it is only valid for a sufficiently high iteration count, contrary to Lemma $7.1$ of\ \citep{burke1990}, which can be written independently of the iteration count.~This is essentially due to the fact that the~Cauchy point is computed via an alternating projected search, contrary to\ \citep{burke1990}, where a centralised projected search is performed.
\begin{lem}
\label{lem:ac_detec}
Assume that Assumptions\ \ref{ass:bnd_hss_mod},\ \ref{ass:bnd_blw},\ \ref{ass:smooth} and\ \ref{ass:reg_lim_pt} hold.~Let~$x^\ast$ be a non-degenerate critical point of\ \eqref{eq:bnd_nlp} that belongs to the relative interior of a face~$\Fcal^{\ast}$ of~$\Omega$.~Let~$\left\{\Fcal_k^{\ast}\right\}_{k=1}^K$ be faces of~$\left\{\Omega_k\right\}_{k=1}^K$ such that~$\Fcal^{\ast}=\Fcal_1^{\ast}\times\ldots\times\Fcal_K^{\ast}$ and thus~$x_k^{\ast}\in\ri\left(\Fcal_k^{\ast}\right)$, for all~$k\in\left\{1,\ldots,K\right\}$.

Assume that~$x^{l}\rightarrow{}x^{\ast}$.~For~$l$ large enough, for all~$k\in\left\{1,\ldots,K\right\}$ and all~$\alpha_k>0$, there exists~$\epsilon_k>0$ such that
\begin{align}
x_k^{l}\in\Bcal\left(x^{\ast}_k,\epsilon_k\right)&\cap\ri\left(\Fcal_k^{\ast}\right)\implies{}P_{\Omega_k}\left(x_k^{l}-t_k\nabla{}m_k\left(x_k^l\right)\right)\in\ri\left(\Fcal_k^{\ast}\right)\enspace,\nonumber
\end{align}
for all~$t_k\in\left]0,\alpha_k\right]$.
\end{lem}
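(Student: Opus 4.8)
The plan is to reduce the claim to a per-block polyhedral identification fact, after first showing that the model gradients converge to the objective gradients. I would start from the product structure of $\Omega$. Since $\Omega=\Omega_1\times\ldots\times\Omega_K$ and $x^\ast\in\ri(\Fcal^\ast)$ with $\Fcal^\ast=\Fcal_1^\ast\times\ldots\times\Fcal_K^\ast$ (Lemma~\ref{lem:face}), one has $\Ncal_{\Omega}(x^\ast)=\Ncal_{\Omega_1}(x_1^\ast)\times\ldots\times\Ncal_{\Omega_K}(x_K^\ast)$, and, taking relative interiors factor by factor, the non-degeneracy $-g(x^\ast)\in\ri(\Ncal_{\Omega}(x^\ast))$ becomes $-g_k(x^\ast)\in\ri(\Ncal_{\Omega_k}(x_k^\ast))$ for every $k$. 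In the same way, criticality of $x^\ast$ gives $-g_k(x^\ast)\in\Ncal_{\Omega_k}(x_k^\ast)$, and since a normal cone is a convex cone containing the origin and $x_k^\ast\in\Omega_k$, this yields $P_{\Omega_k}(x_k^\ast-t\,g_k(x^\ast))=x_k^\ast$ for every $t\geq0$.

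Next I would prove, by induction on $k$, that $z_k^l\to x_k^\ast$ and $\nabla m_k(x_k^l)\to g_k(x^\ast)$. Expanding the model~\eqref{eq:model} and using the matrices $E_k$ of~\eqref{eq:def_E_k}, one obtains $\nabla m_k(x_k^l)=g_k(x^l)+\sum_{j<k}E_kB(x^l)E_j^{\Trans}(z_j^l-x_j^l)$. For $k=1$ the sum is empty, so $\nabla m_1(x_1^l)=g_1(x^l)\to g_1(x^\ast)$ by continuity of $g$ (Assumption~\ref{ass:smooth}); for the inductive step the differences $z_j^l-x_j^l$, $j<k$, vanish by the induction hypothesis while $\|E_kB(x^l)E_j^{\Trans}\|_2\leq\hat B$ by Assumption~\ref{ass:bnd_hss_mod}, so again $\nabla m_k(x_k^l)\to g_k(x^\ast)$. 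Since $z_k^l=P_{\Omega_k}\left(x_k^l-\alpha_k^l\nabla m_k(x_k^l)\right)$ with $\alpha_k^l\in\left]0,\nu_5\right]$, along any subsequence one may pass to a further subsequence on which $\alpha_k^l\to\alpha_k^\infty\in\left[0,\nu_5\right]$; by continuity (nonexpansiveness) of $P_{\Omega_k}$ and the first paragraph, $z_k^l\to P_{\Omega_k}(x_k^\ast-\alpha_k^\infty g_k(x^\ast))=x_k^\ast$ along it, hence $z_k^l\to x_k^\ast$, closing the induction. In particular, for each $k$ and each $\delta_k>0$ there is $l_k$ with $\|\nabla m_k(x_k^l)-g_k(x^\ast)\|_2\leq\delta_k$ whenever $l\geq l_k$.

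It then remains to prove the polyhedral identification fact and combine: fixing $k$ and $\alpha_k>0$, there are $\epsilon_k>0$ and $\delta_k>0$ such that $x_k\in\Bcal(x_k^\ast,\epsilon_k)\cap\ri(\Fcal_k^\ast)$ and $\|g-g_k(x^\ast)\|_2\leq\delta_k$ imply $P_{\Omega_k}(x_k-t_kg)\in\ri(\Fcal_k^\ast)$ for all $t_k\in\left]0,\alpha_k\right]$. The argument is a perturbation of the exact case: when $x_k=x_k^\ast$ and $g=g_k(x^\ast)$ the point $x_k-t_kg$ equals $x_k^\ast+t_k(-g_k(x^\ast))\in x_k^\ast+\ri(\Ncal_{\Omega_k}(x_k^\ast))$, whose projection is $x_k^\ast$ for every $t_k>0$. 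For perturbed data I would take as candidate the projection $y_k$ of $x_k-t_kg$ onto $\aff(\Fcal_k^\ast)$, which equals $x_k-t_kP_{T_k}(g)$ with $T_k:=\Ncal_{\Omega_k}(x_k^\ast)^{\perp}$; since $g_k(x^\ast)$ lies in the linear span of $\Ncal_{\Omega_k}(x_k^\ast)=T_k^{\perp}$, one has $P_{T_k}(g)\to0$ as $g\to g_k(x^\ast)$, so $\|y_k-x_k^\ast\|\leq\epsilon_k+\alpha_k\|g-g_k(x^\ast)\|_2$ is small uniformly in $t_k\leq\alpha_k$, keeping the constraints of $\Omega_k$ inactive at $x_k^\ast$ strictly satisfied at $y_k$, while the residual $x_k-t_kg-y_k$, equal to $-t_k$ times the projection of $g$ onto the linear span of $\Ncal_{\Omega_k}(x_k^\ast)$, tends to $t_k(-g_k(x^\ast))\in\ri(\Ncal_{\Omega_k}(x_k^\ast))$ and hence, for $g$ close enough, lies in $\ri(\Ncal_{\Omega_k}(x_k^\ast))=\Ncal_{\Omega_k}(y_k)$ for all $t_k>0$; this certifies $y_k=P_{\Omega_k}(x_k-t_kg)$ and $y_k\in\ri(\Fcal_k^\ast)$ — the analogue of Lemma~$7.1$ in~\citep{burke1990}. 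Finally, given $\alpha_k>0$, fix $(\epsilon_k,\delta_k)$ as above and then take $l$ beyond all the corresponding $l_k$ of the second paragraph, so that $\|\nabla m_k(x_k^l)-g_k(x^\ast)\|_2\leq\delta_k$; applying the fact with $g=\nabla m_k(x_k^l)$ gives exactly the statement.

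The main obstacle is the polyhedral identification fact: it is precisely where non-degeneracy ($\ri$ of the normal cone, i.e.\ strict complementarity) is indispensable, and making the perturbation argument rigorous requires a careful sensitivity analysis of the projection onto $\Omega_k$ under simultaneous perturbation of the base point (kept in $\ri(\Fcal_k^\ast)$) and of the linearised gradient, uniformly in the step size up to $\alpha_k$; the delicate sub-point is expressing the residual through the facet normals of $\Omega_k$ when that facet description is redundant. By contrast, the convergence $z_k^l-x_k^l\to0$ is routine once one uses nonexpansiveness of the projection and the criticality of $x^\ast$.
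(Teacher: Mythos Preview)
Your proposal is correct and follows essentially the same route as the paper: both exploit the product structure to obtain per-block non-degeneracy $-g_k(x^\ast)\in\ri(\Ncal_{\Omega_k}(x_k^\ast))$, both establish that $\nabla m_k(x_k^l)\to g_k(x^\ast)$ via $z_j^l-x_j^l\to 0$ for $j<k$ together with the bound~$\hat B$ on the model hessian, and both finish with a polyhedral face-identification step.

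The differences are in packaging rather than substance. The paper shows $x_k^l-\alpha_k\nabla m_k(x_k^l)\in\inter(\Fcal_k^\ast+\Ncal(\Fcal_k^\ast))$ and then invokes Theorem~2.3 of~\citep{burke1990} (which gives $P_{\Omega_k}^{-1}(\ri(\Fcal_k^\ast))=\inter(\Fcal_k^\ast+\Ncal(\Fcal_k^\ast))$) as a black box; you essentially reprove that fact by writing the projection candidate $y_k=x_k-t_kP_{T_k}(g)$ explicitly and checking the normal-cone optimality condition. For $z_j^l-x_j^l\to 0$, the paper appeals to Lemma~\ref{lem:lim_inf} (which strictly gives only a $\liminf$, the full limit being supplied by the argument behind Theorem~\ref{th:cauch_cv} once $x^l\to x^\ast$), whereas your induction via nonexpansiveness of $P_{\Omega_k}$ and criticality of $x^\ast$ is a clean self-contained alternative. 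Finally, your factoring of the residual as $-t_k\,P_{T_k^\perp}(g)$ and use of the positive homogeneity of $\ri(\Ncal_{\Omega_k}(x_k^\ast))$ makes the uniformity over $t_k\in\left]0,\alpha_k\right]$ explicit, a point the paper treats only for a fixed $\alpha_k$.
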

\begin{proof}
Similarly to the proof of Lemma~$7.1$ in\ \citep{burke1990}, the idea is to show that there exists a neighbourhood of~$x^{\ast}_k$ such that if~$x_k^{l}$ lies in this neighbourhood, then 
\begin{align}
\nonumber
x_k^{l}-\alpha_k\nabla{}m_k\left(x_k^l\right)\in\ri\left(\Fcal_k^{\ast}+\Ncal\left(\Fcal_k^{\ast}\right)\right)\enspace.
\end{align}
Lemma\ \ref{lem:ac_detec} then follows by using the properties of the projection operator onto a closed convex set and Theorem~$2.3$ in\ \citep{burke1990}.

For simplicity, we prove the above relation for~$k=2$.~It can be trivially extended to all indices~$k$ in~$\left\{3,\ldots,K\right\}$.~Let~$\alpha_2>0$ and~$l\geq1$. 
\begin{align}
\nonumber
x_2^l-\alpha_2\nabla{}m_2\left(x_2^l\right)&=x_2^l-\alpha_2g_2\left(x^l\right)-\alpha_2E_2B\left(x^l\right)E_1^{\Trans}\left(z_1^l-x_1^l\right)\enspace,
\end{align}
where the matrix~$E_k$ is defined in\ \eqref{eq:def_E_k}.~As~$x^{\ast}$ is non-degenerate,
\begin{align}
\nonumber
x^{\ast}-\alpha_2g\left(x^{\ast}\right)\in\ri\left(\Fcal^{\ast}\right)+\ri\left(\Ncal\left(\Fcal^{\ast}\right)\right)\enspace.
\end{align}
However, as the sets~$\left\{\Fcal_k^{\ast}\right\}_{k=1}^K$ are convex, one has\ \citep{rock2009}
\begin{align}
\nonumber
\ri\left(\Fcal^{\ast}\right)=\ri\left(\Fcal_1^{\ast}\right)\times\ldots\ri\left(\Fcal_K^{\ast}\right)~\text{and}~\Ncal\left(\Fcal^{\ast}\right)=\Ncal\left(\Fcal_1^{\ast}\right)\times\ldots\times\Ncal\left(\Fcal_K^{\ast}\right)\enspace.
\end{align}
Hence,
\begin{align}
\nonumber
x_2^{\ast}-\alpha_2g_2\left(x^{\ast}\right)\in\ri\left(\Fcal_2^{\ast}\right)+\ri\left(\Ncal\left(\Fcal_2^{\ast}\right)\right)=\inter\left(\Fcal_2^{\ast}+\Ncal\left(\Fcal_2^{\ast}\right)\right)\enspace,  
\end{align}
by Theorem~$2.3$ in\ \citep{burke1990}.~By continuity of the objective gradient~$g$, there exists~$\delta_2>0$ such that
\begin{align}
\nonumber
\left\|x^l-x^{\ast}\right\|_2<\delta_2\implies{}x_2^l-\alpha_2g_2\left(x^l\right)\in\inter\left(\Fcal_2^{\ast}+\Ncal\left(\Fcal_2^{\ast}\right)\right)\enspace.
\end{align}
However, as shown beforehand (Lemma\ \ref{lem:lim_inf}),
\begin{align}
\lim_{l\rightarrow+\infty}\left\|z_1^l-x_1^l\right\|_2=0\enspace.\nonumber
\end{align}
Moreover,~$E_2B\left(x^l\right)E_1^{\Trans}$ is bounded above (Ass.\ \ref{ass:bnd_hss_mod}), subsequently for~$l$ large enough,
\begin{align}
\nonumber
x_2^l-\alpha_2\nabla{}m_2\left(x_2^l\right)\in\inter\left(\Fcal_2^{\ast}+\Ncal\left(\Fcal_2^{\ast}\right)\right)\subseteq\ri\left(\Fcal_2^{\ast}+\Ncal\left(\Fcal_2^{\ast}\right)\right)\enspace,
\end{align}
by Theorem~$2.3$ in\ \citep{burke1990}.~Then, Lemma\ \ref{lem:ac_detec} follows by properly choosing the radii~$\epsilon_k$ so that~$\sum_{k=1}^K\epsilon_k^2=\left(\min\left\{\delta_k\right\}_{k=1}^K\right)^2$.
\end{proof}
We have just shown that, for a large enough iteration count~$l$, if the primal iterate~$x^l$ is sufficiently close to the critical point~$x^{\ast}$ and on the same face~$\Fcal^{\ast}$, then the set of active constraints at the Cauchy point~$z^l$ is the same as the set of active constraints at~$x^{\ast}$. 
\begin{theo} 
\label{th:ac_detec}
If Assumptions\ \ref{ass:bnd_hss_mod},\ \ref{ass:bnd_blw},\ \ref{ass:smooth} and\ \ref{ass:reg_lim_pt} are fulfilled, then the following holds 
\begin{align} 
\lim_{l\rightarrow+\infty}\left\|\nabla_{\Omega}L\left(x^l\right)\right\|_2=0\enspace.\nonumber
\end{align}
Moreover, there exists~$l_0$ such that for all~$l\geq{}l_0$,
\begin{align}
\Acal_\Omega\left(x^l\right)=\Acal_\Omega\left(x^\ast\right)\enspace.\nonumber
\end{align}
\end{theo}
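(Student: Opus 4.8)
The plan is to prove the second assertion (finite identification of the active set) first, and then deduce $\|\nabla_\Omega L(x^l)\|_2\to 0$ as a short consequence. By Theorem~\ref{th:full_cv} the whole sequence $\{x^l\}$ converges to the non-degenerate critical point $x^\ast$, which lies in the relative interior of a face $\mathcal{F}^\ast=\mathcal{F}_1^\ast\times\ldots\times\mathcal{F}_K^\ast$ of $\Omega$ (Lemma~\ref{lem:face}). Two elementary facts are used throughout: for a polyhedron the active-set map is upper semicontinuous, so $\Acal_\Omega(u)\subseteq\Acal_\Omega(x^\ast)$ whenever $u$ is within some fixed radius of $x^\ast$ (a constraint strictly inactive at $x^\ast$ stays inactive nearby); and the refinement step enforces $\Acal_{\Omega_k}(z_k)\subseteq\Acal_{\Omega_k}(y_k)$ for every $k$ (line~\ref{line:ref_end} of Algorithm~\ref{algo:dis_tr}), so the active set does not shrink when passing from the Cauchy point to the new iterate.

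First I would exhibit a single iterate lying in $\ri(\mathcal{F}^\ast)$. If the algorithm performs only finitely many successful iterations, then $\{x^l\}$ is eventually constant, equal to $x^\ast$, and the theorem is immediate; so assume there are infinitely many successful iterations. Along the successful iterations, the summability of the objective decreases (Assumption~\ref{ass:bnd_blw}), the sufficient-decrease bound of Corollary~\ref{cor:mod_suff_dec}, and the uniform two-sided bounds on the step-sizes $\alpha_k^l$ (see~\eqref{eq:stp_large_1} and the proof of Lemma~\ref{lem:blk_suff_dec}) force $\nicefrac{\|z_k^l-x_k^l\|_2}{\alpha_k^l}\to 0$; hence $z^l\to x^\ast$ and, by Lemma~\ref{lem:coor_opt_upbnd}, $\|\nabla_\Omega L(z^l)\|_2\to 0$ along successful iterations (this is Theorem~\ref{th:cauch_cv}, refined to successful iterations). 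The standard identification property of a non-degenerate critical point (Theorem~$2.3$ in~\citep{burke1990} together with the non-degeneracy hypothesis of Assumption~\ref{ass:reg_lim_pt}) then gives $\Acal_\Omega(z^l)=\Acal_\Omega(x^\ast)$ for every sufficiently large successful index $l$. For such an $l$ one has $x^{l+1}=y^l$, so line~\ref{line:ref_end} yields $\Acal_\Omega(x^\ast)\subseteq\Acal_\Omega(y^l)=\Acal_\Omega(x^{l+1})$, while upper semicontinuity (and $x^{l+1}\to x^\ast$) yields $\Acal_\Omega(x^{l+1})\subseteq\Acal_\Omega(x^\ast)$; hence $\Acal_\Omega(x^{l+1})=\Acal_\Omega(x^\ast)$, i.e. $x^{l+1}\in\ri(\mathcal{F}^\ast)$.

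Next I would propagate this property by induction on $l$. Fix $l_0$ large enough that $x^{l_0}\in\ri(\mathcal{F}^\ast)$, that $x^l\in\Bcal(x^\ast_k,\epsilon_k)$ for all $l\ge l_0$ and all $k$ (the radii $\epsilon_k$ of Lemma~\ref{lem:ac_detec}), and that $x^l$ lies within the uniform upper-semicontinuity radius of $x^\ast$ for all $l\ge l_0$. Suppose $x^l\in\ri(\mathcal{F}^\ast)$ with $l\ge l_0$. If iteration $l$ is unsuccessful then $x^{l+1}=x^l\in\ri(\mathcal{F}^\ast)$. If it is successful, Lemma~\ref{lem:ac_detec} applied with $t_k=\alpha_k^l$ gives $z_k^l\in\ri(\mathcal{F}_k^\ast)$ for every $k$, i.e. $\Acal_\Omega(z^l)=\Acal_\Omega(x^\ast)$; then line~\ref{line:ref_end} gives $\Acal_\Omega(x^\ast)\subseteq\Acal_\Omega(y^l)=\Acal_\Omega(x^{l+1})$ and upper semicontinuity gives the reverse inclusion, so $x^{l+1}\in\ri(\mathcal{F}^\ast)$. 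By induction $\Acal_\Omega(x^l)=\Acal_\Omega(\mathcal{F}^\ast)=\Acal_\Omega(x^\ast)$ for all $l\ge l_0$, which is the second claim. For the first claim, note that for $l\ge l_0$ the tangent cone $\Tcal_\Omega(x^l)$ coincides with the fixed closed convex cone $\Tcal_\Omega(x^\ast)=:\Tcal^\ast$, so that $\nabla_\Omega L(x^l)=P_{\Tcal^\ast}\left(-g(x^l)\right)$; since $g$ is continuous (Assumption~\ref{ass:smooth}), $P_{\Tcal^\ast}$ is nonexpansive and $x^l\to x^\ast$, we get $\nabla_\Omega L(x^l)\to P_{\Tcal^\ast}\left(-g(x^\ast)\right)=\nabla_\Omega L(x^\ast)=0$, the last equality because $x^\ast$ is a critical point of $L+\iota_\Omega$ and the criticality conditions~\eqref{eq:crit_def_1} and~\eqref{eq:crit_def_2} are equivalent.

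The crux is the base case. Plain convergence $x^l\to x^\ast$ does not by itself drive the iterates onto $\ri(\mathcal{F}^\ast)$: they could approach a boundary critical point from the relative interior of $\Omega$, in which case $\Acal_\Omega(x^l)$ would stay strictly smaller than $\Acal_\Omega(x^\ast)$ and $\nabla_\Omega L(x^l)$ would not tend to $0$. One therefore genuinely needs the Cauchy-point criticality of Theorem~\ref{th:cauch_cv}, the non-degeneracy-based identification result, and the active-set monotonicity built into the refinement step to place one iterate on the face; thereafter Lemma~\ref{lem:ac_detec} and that same monotonicity keep it there. The remaining difficulty is purely bookkeeping --- checking that the finitely many ``for $l$ large enough'' requirements (the ball radii of Lemma~\ref{lem:ac_detec}, the upper-semicontinuity radius, and the base index) can all be met by a single $l_0$.
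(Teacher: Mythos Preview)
Your proof is correct and follows the same strategy as the paper's (terse) proof: identify the optimal face at the Cauchy point via Theorem~\ref{th:cauch_cv} and the Burke--Mor\'e identification result, transfer to $x^{l+1}$ using the active-set monotonicity at line~\ref{line:ref_end}, and then propagate inductively with Lemma~\ref{lem:ac_detec}. Your explicit derivation of $\|\nabla_\Omega L(x^l)\|_2\to 0$ from the active-set identification is a clean addition the paper leaves implicit; the only nit is that the identification property you invoke is Theorem~2.2 (not~2.3) of~\citep{burke1990}.
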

\begin{proof}
\looseness-1The reasoning of the proof of Theorem $7.2$ in\ \citep{burke1990} can be applied using Lemma\ \ref{lem:ac_detec} and line\ \ref{line:ref_end} in Algorithm\ \ref{algo:dis_tr}.~The first step is to show that the Cauchy point $z$ identifies the optimal active set after a finite number of iterations.~This is guaranteed by Theorem $2.2$ in\ \citep{burke1990}, since $\nabla_{\Omega}L\left(z^l\right)\rightarrow0$ by Theorem\ \ref{th:cauch_cv}, and the sequence $\left\{x^l\right\}$ converges to a non-degenerate critical point by Theorem\ \ref{th:full_cv}.~Lemma\ \ref{lem:ac_detec} is used to show that if $x^l$ is close enough to $x^\ast$, then the Cauchy point $z^l$ remains in the relative interior of the same face, and thus the active constraints do not change after some point.
\end{proof}
\looseness-1Theorem\ \ref{th:ac_detec} shows that the optimal active set is identified after a finite number of iterations, which corresponds to the behaviour of the gradient projection in standard trust region methods.~This fact is crucial for the local convergence analysis of the sequence~$\left\{x^l\right\}$, as fast local convergence rate cannot be obtained if the dynamics of the active constraints does not settle down.
\subsection{Local Convergence Rate}
\label{subsec:cv_rate}
\looseness-1In this paragraph, we show that the local convergence rate of the sequence~$\left\{x^l\right\}$ generated by\ \textsc{trap} is almost Q-superlinear, in the case where a Newton model is approximately minimised at every trust region iteration, that is 
\begin{align}
\nonumber
B=\nabla^2L\enspace,
\end{align}
in model\ \eqref{eq:model}.~Similarly to\ \eqref{eq:reg_g_hss}, one can define
\begin{align}
\label{eq:def_reg_hss}
H_\sigma:=H+\displaystyle\frac{\sigma}{2}I\enspace.
\end{align}
To establish fast local convergence, a key step is to prove that the trust region radius is ultimately bounded away from zero.~It turns out that the regularisation of the trust region problem\ \eqref{eq:prox_ref} plays an important role in this proof.~As shown in the next Lemma\ \ref{lem:tr_not_zero}, after a large enough number of iterations, the trust region radius does not interfere with the iterates and an inexact~Newton step is always taken at the refinement stage (Line\ \ref{line:ref_beg} to\ \ref{line:ref_end}), implying fast local convergence depending on the level of accuracy in the computation of the~Newton direction.~However,~Theorem~$7.4$ in\ \citep{burke1990} cannot be applied here, since due to the alternating gradient projections, the model decrease at the~Cauchy point cannot be expressed in terms of the projected gradient on the active face at the critical point. 
\begin{lem}
\label{lem:tr_not_zero}
If Assumptions\ \ref{ass:bnd_hss_mod},\ \ref{ass:bnd_blw},\ \ref{ass:smooth} and\ \ref{ass:reg_lim_pt} are fulfilled, then there exists an index~$l_1\geq1$ and~$\Delta^{\ast}>0$ such that for all~$l\geq{}l_1$,~$\Delta^l\geq\Delta^{\ast}$. 
\end{lem}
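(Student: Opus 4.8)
The plan is to follow the proof of Theorem~$7.4$ in~\citep{burke1990}, but to replace the face-projected-gradient estimate of the Cauchy decrease (which is not available here, since the Cauchy point is produced by an alternating projected gradient sweep) by a direct argument exploiting the proximal regularisation in~\eqref{eq:prox_ref} together with the strong second-order condition of Assumption~\ref{ass:reg_lim_pt}. First I would invoke Theorems~\ref{th:full_cv} and~\ref{th:ac_detec}: for $l$ large, $x^l$ lies in an arbitrarily small neighbourhood of $x^\ast$, the active set $\Acal_\Omega\left(x^l\right)=\Acal_\Omega\left(x^\ast\right)$ is frozen, and, by Lemma~\ref{lem:lim_inf} together with the boundedness of the $\alpha_k$, the Cauchy points $z^l$ converge to $x^\ast$ and sit on the same face $\Fcal^\ast$. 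Hence, for such $l$, the subspace $V\left(z^l\right)=\range\left(Z\right)$ coincides with the fixed subspace $\Ncal_\Omega\left(x^\ast\right)^\perp$, and the reduced regularised model hessian $Z^\Trans B_\sigma\left(x^l\right)Z$ is uniformly positive definite: by Assumption~\ref{ass:reg_lim_pt} and continuity of $\nabla^2L$ near $x^\ast$, its smallest eigenvalue is at least some $\mu>0$ once the neighbourhood is small enough.

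The crux is the following claim: there are $\bar{\Delta}>0$ and an index $L_1$ such that every iteration $l\geq L_1$ with $\Delta^l\leq\bar{\Delta}$ is very successful, i.e.\ $\rho^l>\eta_2$. Granting this, the lemma follows by a short induction. For every $l$ one has $\Delta^{l+1}\geq\sigma_1\Delta^l$, the smallest admissible update across all three branches of Lines~\ref{line:test_beg}--\ref{line:test_end} (recall $\sigma_1<1$), whereas $l\geq L_1$ and $\Delta^l\leq\bar{\Delta}$ give $\Delta^{l+1}\geq\Delta^l$ from the very-successful branch. Setting $l_1:=L_1$ and $\Delta^\ast:=\min\left\{\Delta^{l_1},\sigma_1\bar{\Delta}\right\}$, one checks inductively that $\Delta^l\geq\Delta^\ast$ for all $l\geq l_1$: if $\Delta^l>\bar{\Delta}$ then $\Delta^{l+1}\geq\sigma_1\Delta^l>\sigma_1\bar{\Delta}\geq\Delta^\ast$, and if $\Delta^l\leq\bar{\Delta}$ then $\Delta^{l+1}\geq\Delta^l\geq\Delta^\ast$.

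To prove the claim I would bound $\left|\rho^l-1\right|$, which equals $\left|m\left(y^l\right)-L\left(y^l\right)\right|$ divided by $m\left(x^l\right)-m\left(y^l\right)$ since $m\left(x^l\right)=L\left(x^l\right)$. Because $B=\nabla^2L$ and both $x^l$ and $y^l$ stay in a small neighbourhood of $x^\ast$ (note $\left\|y^l-x^l\right\|_\infty\leq\gamma_2\Delta^l\leq\gamma_2\bar{\Delta}$), a second-order Taylor expansion with integral remainder gives $\left|m\left(y^l\right)-L\left(y^l\right)\right|\leq\tfrac{1}{2}\varepsilon\left(\bar{\Delta}\right)\left\|y^l-x^l\right\|_2^2$, with $\varepsilon\left(\bar{\Delta}\right)\to0$ as $\bar{\Delta}\to0$ by continuity of $\nabla^2L$ at $x^\ast$. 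For the denominator I distinguish whether the constraint $\left\|p\right\|_\infty\leq\gamma_2\Delta^l$ is active at the refinement solution. If it is inactive, $y^l$ is, up to the sCG tolerance, the minimiser of the $\mu$-strongly convex reduced problem~\eqref{eq:prox_ref_eq} over $V\left(z^l\right)$, so that $\left\|y^l-x^l\right\|_2$ is of the order of the reduced gradient at $x^l$; combining the Cauchy decrease of Corollary~\ref{cor:mod_suff_dec}, the inequality $m\left(x^l\right)-m\left(y^l\right)\geq\gamma_1\left(m\left(x^l\right)-m\left(z^l\right)\right)$ and strong convexity then yields $m\left(x^l\right)-m\left(y^l\right)\geq c_1\left\|y^l-x^l\right\|_2^2$. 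If it is active, then $\left\|y^l-x^l\right\|_\infty=\gamma_2\Delta^l$ while, because the regularised reduced Newton step has been truncated, the step-size conditions~\eqref{eq:stp_large_1}, \eqref{eq:stp_large_2} and~\eqref{eq:stp_large_3} force $\left\|z_k^l-x_k^l\right\|_2/\alpha_k^l\geq c_3\Delta^l$ for at least one $k$; Corollary~\ref{cor:mod_suff_dec} then gives $m\left(x^l\right)-m\left(y^l\right)\geq\gamma_1\left(m\left(x^l\right)-m\left(z^l\right)\right)\geq c_2\left(\Delta^l\right)^2\geq c_2\gamma_2^{-2}\left\|y^l-x^l\right\|_2^2$. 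In either case $\left|\rho^l-1\right|\leq\tfrac{1}{2}\varepsilon\left(\bar{\Delta}\right)/\min\left\{c_1,c_2\gamma_2^{-2}\right\}$, which is smaller than $1-\eta_2$ once $\bar{\Delta}$ is chosen small enough; hence $\rho^l>\eta_2$, proving the claim.

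The main obstacle is the denominator bound in the truncated case: one has to show that, near $x^\ast$ with the active set frozen, a refinement step that reaches the trust-region boundary necessarily stems from a genuinely large alternating projected-gradient step, so that the Gauss--Seidel Cauchy quantity $\sum_{k=1}^K\left\|z_k^l-x_k^l\right\|_2/\alpha_k^l$ is of order $\Delta^l$ rather than of order $\left\|x^l-x^\ast\right\|_2$. This is precisely where conditions~\eqref{eq:stp_large_2} and~\eqref{eq:stp_large_3} and the lower bound on the $\alpha_k$ in~\eqref{eq:stp_large_1} come into play, and it is the substitute for the face-projected-gradient estimate of~\citep{burke1990}, which cannot be used here because of the alternating projections.
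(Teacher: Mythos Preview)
Your overall plan (bound $|\rho^l-1|$ by a ratio whose numerator is $o(\|y^l-x^l\|_2^2)$ and whose denominator is $\geq c\|y^l-x^l\|_2^2$) is the right one, but the denominator argument has a genuine gap in the truncated case, and the fix you gesture at does not work. Conditions~\eqref{eq:stp_large_1}, \eqref{eq:stp_large_2} and~\eqref{eq:stp_large_3} govern only the coordinate step-sizes in the Cauchy sweep; they say nothing about whether the \emph{refinement} step hits the trust-region boundary. There is no mechanism by which ``sCG reaches $\|p\|_\infty=\gamma_2\Delta^l$'' forces some block quantity $\|z_k^l-x_k^l\|_2/\alpha_k^l$ to be of order $\Delta^l$, so your passage from Corollary~\ref{cor:mod_suff_dec} to $m(x^l)-m(y^l)\geq c_2(\Delta^l)^2$ does not go through. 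The inactive case is also sketchy: Corollary~\ref{cor:mod_suff_dec} is stated in terms of $\|z_k-x_k\|_2/\alpha_k$, not in terms of the reduced gradient, and the link you need goes in the wrong direction compared with Lemma~\ref{lem:coor_opt_upbnd}.

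The paper avoids the case split entirely, and this is precisely where the proximal term in~\eqref{eq:prox_ref} is used (you mention it in your opening sentence but then never exploit it). Since the sCG iterations in Algorithm~\ref{algo:pcg} start at $z^l$ and are monotone on the \emph{regularised} model, one has $m(y^l)+\tfrac{\sigma}{2}\|y^l-z^l\|_2^2\leq m(z^l)$, hence $m(z^l)-m(y^l)\geq\tfrac{\underline{\sigma}}{2}\|y^l-z^l\|_2^2$. Independently, the block sufficient-decrease condition~\eqref{eq:stp_small} together with the projection inequality (as in the proof of Lemma~\ref{lem:bck}) gives $m(x^l)-m(z^l)\geq\tfrac{\nu_0}{2\nu_5}\|z^l-x^l\|_2^2$. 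Adding these and using $\|p^l\|_2\leq 2\max\{\|y^l-z^l\|_2,\|z^l-x^l\|_2\}$ yields
\[
m(x^l)-m(y^l)\;\geq\;\tfrac{1}{8}\min\{\nu_0/\nu_5,\underline{\sigma}\}\,\|p^l\|_2^2,
\]
with no reference to whether the trust-region constraint is active. Combined with your numerator bound $\tfrac12\psi^l\|p^l\|_2^2$, this gives $|\rho^l-1|\leq 4\psi^l/\min\{\nu_0/\nu_5,\underline{\sigma}\}$. The paper then shows the stronger statement $\rho^l\to 1$ by proving $p^l\to 0$: for $l\geq l_0$ one has $p^l\in\Ncal(\Fcal^\ast)^\perp$, the reduced Hessian is uniformly coercive there by Assumption~\ref{ass:reg_lim_pt}, and the model decrease plus Moreau's decomposition give $\tfrac{\kappa}{2}\|p^l\|_2\leq\|\nabla_\Omega L(x^l)\|_2\to 0$ by Theorem~\ref{th:ac_detec}. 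Eventual success then bounds $\Delta^l$ away from zero by the trust-region mechanics, without the threshold-$\bar\Delta$ induction you set up.
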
 
\begin{proof}
The idea is to show that the ratio~$\rho$ converges to one, which implies that all iterations are ultimately successful, and subsequently, by the mechanism of Algorithm\ \ref{algo:dis_tr}, the trust region radius is bounded away from zero asymptotically.~For all~$l\geq1$,
\begin{align}
\label{eq:tr_ratio_1}
\left|\rho^l-1\right|&=\displaystyle\frac{\left|L\left(y^l\right)-L\left(x^l\right)-\left<g\left(x^l\right),y^l-x^l\right>-\displaystyle\frac{1}{2}\left<y^l-x^l,H\left(x^l\right)\left(y^l-x^l\right)\right>\right|}{m\left(x^l\right)-m\left(y^l\right)}\enspace.
\end{align}
However, 
\begin{align}
m^l\left(x^l\right)-m^l\left(y^l\right)&=m^l\left(x^l\right)-m^l\left(z^l\right)+m^l\left(z^l\right)-m^l\left(y^l\right)\nonumber\\
&\geq\displaystyle\frac{\underline{\eta}}{2}\left\|z^l-x^l\right\|_2^2+\displaystyle\frac{\underline{\sigma}}{2}\left\|y^l-z^l\right\|_2^2\nonumber\\
&\geq\displaystyle\frac{\min\left\{\underline{\eta},\underline{\sigma}\right\}}{2}\left(\left\|z^l-x^l\right\|_2^2+\left\|y^l-z^l\right\|_2^2\right)\nonumber\\
&\geq\displaystyle\frac{\min\left\{\underline{\eta},\underline{\sigma}\right\}}{2}\max\left\{\left\|z^l-x^l\right\|_2^2,\left\|y^l-z^l\right\|_2^2\right\}\enspace,\nonumber
\end{align}
and 
\begin{align}
\left\|p^l\right\|_2&\leq\left\|y^l-z^l\right\|_2+\left\|z^l-x^l\right\|_2\nonumber\\
&\leq2\max\left\{\left\|y^l-z^l\right\|_2,\left\|z^l-x^l\right\|_2\right\}\enspace.\nonumber
\end{align}
Hence, 
\begin{align}
m^l\left(x^l\right)-m^l\left(y^l\right)\geq\displaystyle\frac{\min\left\{\underline{\eta},\underline{\sigma}\right\}}{8}\left\|p^l\right\|_2^2\enspace.\nonumber
\end{align}
Moreover, using the mean-value theorem, one obtains that the numerator in\ \eqref{eq:tr_ratio_1} is smaller than 
\begin{align}
\displaystyle\frac{1}{2}\psi^l\left\|p^l\right\|_2^2\enspace,\nonumber
\end{align}
where 
\begin{align}
\label{eq:def_psi_l}
\psi^l:=\suprem_{\tau\in\left[0,1\right]}\left\|H\left(x^l+\tau{}p^l\right)-H\left(x^l\right)\right\|_2\enspace.
\end{align}
Subsequently, we have
\begin{align}
\left|\rho^l-1\right|\leq\displaystyle\frac{4}{\min\left\{\underline{\eta},\underline{\sigma}\right\}}\psi^l\enspace,\nonumber
\end{align}
and the result follows by showing that~$p^l$ converges to zero.~Take~$l\geq{}l_0$, where~$l_0$ is as in Theorem\ \ref{th:ac_detec}.~Thus, $p^l\in\Ncal\left(\Fcal^{\ast}\right)^{\perp}$.~However, from the model decrease, one obtains  
\begin{align}
\displaystyle\frac{1}{2}\left<p^l,H\left(x^l\right)p^l\right>\leq\left<-g\left(x^l\right),p^l\right>\enspace.\nonumber
\end{align}
From Theorem\ \ref{th:full_cv}, the sequence~$\left\{x^l\right\}$ converges to~$x^{\ast}$, which satisfies the strong second-order optimality condition\ \ref{ass:reg_lim_pt}.~Hence, by continuity of the hessian~$\nabla^2L$ and the fact that~$\Acal_\Omega\left(x^l\right)=\Acal_\Omega\left(x^{\ast}\right)$, one can claim that there exists~$l_1\geq{}l_0$ such that for all~$l\geq{}l_1$, for all~$v\in\Ncal_\Omega\left(x^l\right)^{\perp}=\Ncal\left(\Fcal^{\ast}\right)^{\perp}$,
\begin{align}
\left<v,H\left(x^l\right)v\right>\geq\kappa\left\|v\right\|_2^2\enspace.\nonumber
\end{align} 
Thus, by~Moreau's decomposition, it follows that
\begin{align}
\displaystyle\frac{\kappa}{2}\left\|p^l\right\|_2^2&\leq\left<P_{\Tcal_{\Omega}\left(x^l\right)}\left(-g\left(x^l\right)\right)+P_{\Ncal_\Omega\left(x^l\right)}\left(-g\left(x^l\right)\right),p^l\right>\nonumber\\
 &\leq\left\|P_{\Tcal_{\Omega}\left(x^l\right)}\left(-g\left(x^l\right)\right)\right\|_2\left\|p^l\right\|_2\enspace,\nonumber
\end{align}
since~$p^l\in\Ncal\left(\Fcal^{\ast}\right)^{\perp}$.~Finally,~$p^l$ converges to zero, as a consequence of Lemma\ \ref{lem:coor_opt_upbnd} and the fact that~$\left\|z^l-x^l\right\|_2$ converges to~$0$, by Lemma\ \ref{lem:coor_opt_upbnd} and the fact that the step-sizes~$\alpha_k$ are upper bounded for~$k\in\left\{1,\ldots,K\right\}$. 
\end{proof}
The refinement step in\ \textsc{trap} actually consists of a truncated~Newton method, in which the~Newton direction is generated by an iterative procedure, namely the  distributed sCG described in Algorithm\ \ref{algo:pcg}.~The Newton iterations terminate when the residual $\hat{s}$ is below a tolerance that depends on the norm of the projected gradient at the current iteration.~In Algorithm\ \ref{algo:pcg}, the stopping condition is set so that at every iteration $l\geq1$, there exists $\xi^l\in\left]0,1\right[$ satisfying
\begin{align}
\label{eq:inex_new_1}
\left\|Z^l\left(Z^l\right)^{\Trans}\left(g_{\sigma^l}\left(x^l\right)+H_{\sigma^l}\left(x^l\right)p^l\right)\right\|_2\leq\xi^l\left\|Z^l\left(Z^l\right)^{\Trans}g\left(x^l\right)\right\|_2\enspace.
\end{align} 
\looseness-1The local convergence rate of the sequence~$\left\{x^l\right\}$ generated by\ \textsc{trap} is controlled by the sequences~$\left\{\xi^l\right\}$ and~$\left\{\sigma^l\right\}$, as shown in the following Theorem.
\begin{theo}[Local linear convergence]
Assume that the direction $p$ yielded by Algorithm\ \ref{algo:pcg} satisfies\ \eqref{eq:inex_new_1} if $\left\|p\right\|_\infty\leq\gamma^{\ast}\Delta$ and $\Acal_\Omega\left(x\right)=\Acal_\Omega\left(x+p\right)$, given $\gamma^{\ast}\in\left]0,\gamma_2\right[$.~Under Assumptions\ \ref{ass:bnd_hss_mod},\ \ref{ass:bnd_blw},\ \ref{ass:smooth} and\ \ref{ass:reg_lim_pt}, for a small enough $\bar{\sigma}$, the sequence $\left\{x^l\right\}$ generated by\ \textsc{trap} converges Q-linearly to $x^{\ast}$ if~$\xi^{\ast}<1$ is small enough, where
\begin{align}
\nonumber
\xi^{\ast}:=\limsup_{l\rightarrow+\infty}\xi^l\enspace.
\end{align}    
If $\xi^{\ast}=0$, the Q-linear convergence ratio can be made arbitrarily small by properly choosing $\bar{\sigma}$, resulting in almost Q-superlinear convergence.
\end{theo}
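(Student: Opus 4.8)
The plan is to leverage three facts already at hand — convergence of $\left\{x^l\right\}$ to the non-degenerate critical point $x^{\ast}$ (Theorem~\ref{th:full_cv}), finite identification of the optimal active set (Theorem~\ref{th:ac_detec}), and the existence of $\Delta^{\ast}>0$ with $\Delta^l\geq\Delta^{\ast}$ together with $p^l\to0$ and $z^l-x^l\to0$ (Lemma~\ref{lem:tr_not_zero}) — to show that, past some index $\bar l$, the algorithm behaves exactly like an inexact Newton iteration on the fixed active face $\Fcal^{\ast}\ni x^{\ast}$. Indeed, from the proof of Lemma~\ref{lem:tr_not_zero} the ratio $\rho^l\to1$, so every iteration is eventually successful and $x^{l+1}=y^l$; since $\left\|p^l\right\|_\infty\leq\left\|p^l\right\|_2\to0$ while $\Delta^l\geq\Delta^{\ast}>0$, we have $\left\|p^l\right\|_\infty\leq\gamma^{\ast}\Delta^l$ for $l$ large; and by Theorem~\ref{th:ac_detec}, $\Acal_\Omega\left(x^l\right)=\Acal_\Omega\left(z^l\right)=\Acal_\Omega\left(x^{l+1}\right)=\Acal_\Omega\left(x^{\ast}\right)$. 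Hence the hypothesis of the theorem applies and $p^l$ satisfies~\eqref{eq:inex_new_1}. Let $Z=\diag\left(Z_1,\ldots,Z_K\right)$ be an orthonormal basis of $\Ncal_\Omega\left(x^{\ast}\right)^{\perp}$; for $l\geq\bar l$ the vectors $x^l-x^{\ast}$, $z^l-x^l$ and $p^l$ all lie in $\range\left(Z\right)$, non-degeneracy gives $Z^{\Trans}g\left(x^{\ast}\right)=0$, and Assumption~\ref{ass:reg_lim_pt} with continuity of $H=\nabla^2L$ makes $Z^{\Trans}H\left(x^l\right)Z$ positive definite with smallest eigenvalue at least some $\kappa'$ arbitrarily close to $\kappa$.

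Next I would pass to reduced coordinates, writing $\hat e^l:=Z^{\Trans}\left(x^l-x^{\ast}\right)$, $\hat p^l:=Z^{\Trans}p^l$, $\hat d^l:=Z^{\Trans}\left(z^l-x^l\right)$, $\hat g^l:=Z^{\Trans}g\left(x^l\right)$ and $M^l:=Z^{\Trans}H\left(x^l\right)Z+\tfrac{\sigma^l}{2}I$, so that, using~\eqref{eq:reg_g_hss} and $p^l=Z\hat p^l$, condition~\eqref{eq:inex_new_1} reads $M^l\hat p^l=-\hat g^l+\sigma^l\hat d^l+\hat r^l$ with $\left\|\hat r^l\right\|_2\leq\xi^l\left\|\hat g^l\right\|_2$. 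A mean-value expansion of $g$ between $x^{\ast}$ and $x^l$ together with $Z^{\Trans}g\left(x^{\ast}\right)=0$ gives $\hat g^l=\bar{\hat H}^l\hat e^l$ with $\bar{\hat H}^l\to Z^{\Trans}H\left(x^{\ast}\right)Z$, whence $\hat e^{l+1}=\hat e^l+\hat p^l=\left(M^l\right)^{-1}\bigl[\bigl(Z^{\Trans}H\left(x^l\right)Z-\bar{\hat H}^l+\tfrac{\sigma^l}{2}I\bigr)\hat e^l+\sigma^l\hat d^l+\hat r^l\bigr]$. Since $\left\|\hat e^{l+1}\right\|_2=\left\|x^{l+1}-x^{\ast}\right\|_2$, $\left\|\hat e^l\right\|_2=\left\|x^l-x^{\ast}\right\|_2$, $\left\|\left(M^l\right)^{-1}\right\|_2\leq1/\kappa'$, $\left\|\hat r^l\right\|_2\leq\xi^l\hat B\left\|\hat e^l\right\|_2$ by Assumption~\ref{ass:bnd_hss_mod}, and $\left\|Z^{\Trans}H\left(x^l\right)Z-\bar{\hat H}^l\right\|_2\to0$, this produces a bound $\left\|x^{l+1}-x^{\ast}\right\|_2\leq\varrho^l\left\|x^l-x^{\ast}\right\|_2$ once $\left\|\hat d^l\right\|_2$ is controlled.

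The one genuinely new ingredient — and the step I expect to be the main obstacle — is the estimate $\left\|z^l-x^l\right\|_2\leq C_z\left\|x^l-x^{\ast}\right\|_2$ with $C_z$ \emph{independent of} $\bar\sigma$. I would derive it by observing that, once $z_k^l\in\ri\left(\Fcal_k^{\ast}\right)$ (guaranteed for $l$ large by Theorem~\ref{th:ac_detec}), the block projection defining $z_k^l$ coincides with the orthogonal projection onto $\aff\left(\Fcal_k^{\ast}\right)$, so $z_k^l-x_k^l=-\alpha_k^lZ_kZ_k^{\Trans}\nabla_km_k\left(x_k^l\right)$; expanding $\nabla_km_k$ as in~\eqref{eq:def_mod_coor}, using $Z_k^{\Trans}g_k\left(x^{\ast}\right)=0$, the Hessian bound $\hat B$, the upper bound $\alpha_k^l\leq\nu_5$, and inducting on $k$ to absorb the coupling terms $B_{k,j}\left(x^l\right)\left(z_j^l-x_j^l\right)$, $j<k$, yields the claim with $C_z$ depending only on $K$, $\nu_5$ and $\hat B$. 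Substituting, one gets $\varrho^l\to\varrho^{\ast}=\tfrac{1}{\kappa}\bigl(\bar\sigma\left(\tfrac12+C_z\right)+\xi^{\ast}\hat B\bigr)$ (using $\sigma^l\leq\bar\sigma$ and $\kappa'\to\kappa$). Therefore, if $\xi^{\ast}<\kappa/\hat B$ one may choose $\bar\sigma$ so small that $\varrho^{\ast}<1$, and since $\varrho^l\to\varrho^{\ast}$ the sequence converges Q-linearly with any ratio in $\left]\varrho^{\ast},1\right[$; and when $\xi^{\ast}=0$ one has $\varrho^{\ast}=\bar\sigma\left(\tfrac12+C_z\right)/\kappa$, which shrinks to $0$ as $\bar\sigma\downarrow0$, giving the claimed almost Q-superlinear rate. (As in Lemma~\ref{lem:tr_not_zero}, continuity of $\nabla^2L$ is used implicitly throughout.)
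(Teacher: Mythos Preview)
Your proposal is correct and follows essentially the same strategy as the paper: invoke Theorems~\ref{th:full_cv}, \ref{th:ac_detec} and Lemma~\ref{lem:tr_not_zero} to reduce to an inexact Newton iteration on the fixed face $\Fcal^{\ast}$, then derive a contraction from condition~\eqref{eq:inex_new_1} together with a bound on the Cauchy displacement $z^l-x^l$.

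The only notable difference is the intermediate quantity tracked. The paper works with the reduced gradient $\left\|Z^{\ast}\left(Z^{\ast}\right)^{\Trans}g\left(x^l\right)\right\|_2$, shows it contracts with ratio $\tfrac{2\delta^l}{\kappa}+\xi^l+\bar\sigma\left(\tfrac{1}{\kappa}+C\right)$, and then sandwiches it between multiples of $\left\|x^l-x^{\ast}\right\|_2$, which introduces an extra factor $\tfrac{\hat B+\Upsilon}{\kappa-\bar\epsilon}$ in the final bound. You instead invert $M^l$ and track the reduced error $\hat e^l$ directly, which avoids that factor and yields a slightly tighter constant. Your treatment of the Cauchy bound is also more explicit: the paper simply asserts that $\left\|Z^{\ast}\left(Z^{\ast}\right)^{\Trans}\left(z^l-x^l\right)\right\|_2/\left\|Z^{\ast}\left(Z^{\ast}\right)^{\Trans}g\left(x^l\right)\right\|_2$ is bounded ``from the computation of the Cauchy point'', whereas your inductive argument across blocks (using $z_k^l-x_k^l=-\alpha_k^lZ_kZ_k^{\Trans}\nabla_km_k\left(x_k^l\right)$ once the projection lands in $\ri\left(\Fcal_k^{\ast}\right)$) makes the constant and its independence from $\bar\sigma$ transparent. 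One small remark: the identity $Z^{\Trans}g\left(x^{\ast}\right)=0$ follows already from criticality ($-g\left(x^{\ast}\right)\in\Ncal_\Omega\left(x^{\ast}\right)$), not from non-degeneracy per se.
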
  
\begin{proof}
Throughout the proof, we assume that $l$ is large enough so that the active-set is $\Acal_{\Omega}\left(x^{\ast}\right)$ and that $p^l$ satisfies condition\ \eqref{eq:inex_new_1}.~This is ensured by Lemma\ \ref{lem:tr_not_zero} and Theorem\ \ref{th:ac_detec}, as the sequence $\left\{p^l\right\}$ converges to zero.~Thus, we can write $Z^l=Z^{\ast}$.~The orthogonal projection onto the subspace $\Ncal\left(\Fcal^{\ast}\right)^{\perp}$ is represented by the matrix $Z^{\ast}\left(Z^{\ast}\right)^{\Trans}$.~A first-order development yields a positive sequence $\left\{\delta^l\right\}$ converging to zero such that 
\begin{align}
\nonumber
\left\|Z^{\ast}\left(Z^{\ast}\right)^{\Trans}g\left(x^{l+1}\right)\right\|_2&\leq\left\|Z^{\ast}\left(Z^{\ast}\right)^{\Trans}\left(g\left(x^l\right)+H\left(x^l\right)p^l\right)\right\|_2+\delta^l\left\|p^l\right\|_2\nonumber\\
&\leq\displaystyle\frac{2\delta^l}{\kappa}\left\|Z^{\ast}\left(Z^{\ast}\right)^{\Trans}g\left(x^l\right)\right\|_2+\left\|Z^{\ast}\left(Z^{\ast}\right)^{\Trans}\left(g_{\sigma^l}\left(x^l\right)+H_{\sigma^l}\left(x^l\right)p^l\right)\right\|_2\nonumber\\
&~~~~~~~~~~~~~~~~~~~~~~~~+\bar{\sigma}\left\|Z^{\ast}\left(Z^{\ast}\right)^{\Trans}\left(\displaystyle\frac{p^l}{2}+z^l-x^l\right)\right\|_2\nonumber\\
&\leq\left(\displaystyle\frac{2\delta^l}{\kappa}+\xi^l\right)\left\|Z^{\ast}\left(Z^{\ast}\right)^{\Trans}g\left(x^l\right)\right\|_2\nonumber\\
&~~~~~~~~~~+\bar{\sigma}\left(\displaystyle\frac{1}{\kappa}+\displaystyle\frac{\left\|Z^{\ast}\left(Z^{\ast}\right)^{\Trans}\left(z^l-x^l\right)\right\|_2}{\left\|Z^{\ast}\left(Z^{\ast}\right)^{\Trans}g\left(x^l\right)\right\|_2}\right)\left\|Z^{\ast}\left(Z^{\ast}\right)^{\Trans}g\left(x^l\right)\right\|_2\enspace.\nonumber
\end{align}
\looseness-1where the second inequality follows from the last inequality in Lemma\ \ref{lem:tr_not_zero}, and the definition of $g_{\sigma}$ in Eq.\ \eqref{eq:reg_g_hss} and $H_{\sigma}$ in Eq.\ \eqref{eq:def_reg_hss}.~However, from the computation of the Cauchy point described in paragraph\ \ref{subsec:stp_size} and Assumption~\ref{ass:bnd_hss_mod}, the term 
\begin{align}
\nonumber
\displaystyle\frac{\left\|Z^{\ast}\left(Z^{\ast}\right)^{\Trans}\left(z^l-x^l\right)\right\|_2}{\left\|Z^{\ast}\left(Z^{\ast}\right)^{\Trans}g\left(x^l\right)\right\|_2}
\end{align} 
is bounded by a constant $C>0$.~Hence,
\begin{align}
\nonumber
\displaystyle\frac{\left\|Z^{\ast}\left(Z^{\ast}\right)^{\Trans}g\left(x^{l+1}\right)\right\|_2}{\left\|Z^{\ast}\left(Z^{\ast}\right)^{\Trans}g\left(x^l\right)\right\|_2}\leq\displaystyle\frac{2\delta^l}{\kappa}+\xi^l+\bar{\sigma}\left(\displaystyle\frac{1}{\kappa}+C\right)\enspace.
\end{align}
Moreover, a first-order development provides us with a constant $\Upsilon>0$ such that
\begin{align}
\left\|Z^{\ast}\left(Z^{\ast}\right)^{\Trans}g\left(x^l\right)\right\|_2\leq\left(\hat{B}+\Upsilon\right)\left\|x^l-x^{\ast}\right\|_2\enspace.\nonumber
\end{align}
There also exists a positive sequence $\left\{\epsilon^l\right\}$ converging to zero such that
\begin{align}
\nonumber
\left\|Z^{\ast}\left(Z^{\ast}\right)^{\Trans}g\left(x^{l+1}\right)\right\|_2\geq\left\|Z^{\ast}\left(Z^{\ast}\right)^{\Trans}H\left(x^{\ast}\right)\left(x^{l+1}-x^{\ast}\right)\right\|_2-\epsilon^l\left\|x^{l+1}-x^{\ast}\right\|_2\enspace.\nonumber
\end{align}
However, since $x^{l+1}-x^{\ast}$ lies in $\Ncal\left(x^{\ast}\right)^{\perp}$, $Z^{\ast}\left(Z^{\ast}\right)^{\Trans}\left(x^{l+1}-x^l\right)=x^{l+1}-x^l$.~Thus, by Assumption\ \eqref{eq:ssoc}, 
\begin{align}
\nonumber
\left\|Z^{\ast}\left(Z^{\ast}\right)^{\Trans}\nabla{}L\left(x^{l+1}\right)\right\|_2\geq(\kappa-\epsilon^l)\left\|x^{l+1}-x^{\ast}\right\|_2\enspace,
\end{align}
which implies that, for $l$ large enough, there exists $\bar{\epsilon}\in\left]0,\kappa\right[$ such that
\begin{align}
\nonumber
\left\|Z^{\ast}\left(Z^{\ast}\right)^{\Trans}g\left(x^{l+1}\right)\right\|_2\geq(\kappa-\bar{\epsilon})\left\|x^{l+1}-x^{\ast}\right\|_2\enspace.
\end{align}
Finally, 
\begin{align}
\nonumber
\displaystyle\frac{\left\|x^{l+1}-x^{\ast}\right\|_2}{\left\|x^l-x^{\ast}\right\|_2}\leq\displaystyle\frac{\hat{B}+\Upsilon}{\kappa-\bar{\epsilon}}\left(\frac{2\delta^l}{\kappa}+\xi^l+\bar{\sigma}\left(\displaystyle\frac{1}{\kappa}+C\right)\right)\enspace,
\end{align}
which yields the result.
\end{proof}
\section{Numerical Examples}
\label{sec:ac_pow}
\looseness-1The optimal AC power flow constitutes a challenging class of nonconvex problems for benchmarking optimisation algorithms and software.~It has been used very recently in the testing of a novel adaptive augmented Lagrangian technique\ \citep{curtis2014}.~The power flow equations form a set of nonlinear coupling constraints over a network.~Some distributed optimisation strategies have already been explored for computing OPF solutions, either based on convex relaxations\ \citep{tse2012} or nonconvex heuristics\ \citep{baldick1997}.~As the convex relaxation may fail in a significant number of cases\ \citep{grothey2013}, it is also relevant to explore distributed strategies for solving the OPF in its general nonconvex formulation.~Naturally, all that we can hope for with this approach is a local minimum of the OPF problem.~Algorithm\ \ref{algo:dis_tr} is tested on the augmented Lagrangian subproblems obtained via a polar coordinates formulation of the OPF equations, as well as rectangular coordinates formulations.~Our\ \textsc{trap} algorithm is run as an inner solver inside a standard augmented Lagrangian loop\ \citep{bert1982} and in the more sophisticated\ \textsc{lancelot} dual loop\ \citep{conn1991}.~More precisely, if the~OPF problem is written in the following form
\begin{align}
\label{eq:opf_ex}
\minimise_{x}f\left(x\right)&\\
\text{s.t.}~g\left(x\right)=0&\nonumber\\
x\in\Xcal&\enspace,\nonumber
\end{align}
where~$\Xcal$ is a bound constraint set, an augmented Lagrangian loop consists in computing an approximate critical point of the auxiliary program\\[\baselineskip]
\begin{minipage}{\textwidth}
\begin{align}
\label{eq:opf_al}
\minimise_{x\in\Xcal}L_{\varrho}(x,\mu):=f(x)+\left(\mu+\displaystyle\frac{\varrho}{2}g(x)\right)^{\Trans}g(x)
\end{align}
\end{minipage}
\\[\baselineskip]\looseness-1with~$\mu$ a dual variable associated to the power flow constraints and $\varrho>0$ a penalty parameter, which are both updated after a finite sequence of primal iterations in\ \eqref{eq:opf_al}.~Using the standard first-order dual update formula, only local convergence of the dual sequence can be proven\ \citep{bert1982}.~On the contrary, in the\ \textsc{lancelot} outer loop, the dual variable~$\mu$ and the penalty parameter~$\varrho$ are updated according to the level of satisfaction of the power flow (equality) constraints, resulting in global convergence of the dual sequence\ \citep{conn1991}.~In order to test\ \textsc{trap}, we use it to compute approximate critical points of the subproblems\ \eqref{eq:opf_ex}, which are of the form\ \eqref{eq:bnd_nlp}.~The rationale behind choosing\ \textsc{lancelot} instead of a standard augmented Lagrangian method as the outer loop is that\ \textsc{lancelot} interrupts the inner iterations at an early stage, based on a~KKT tolerance that is updated at every dual iteration.~Hence, it does not allow one to really measure the absolute performance of\ \textsc{trap}, although it is likely more efficient than a standard augmented Lagrangian for computing a solution of the OPF program.~Thus, for all cases presented next, we provide the results of the combination of\ \textsc{trap} with a basic augmented Lagrangian and\ \textsc{lancelot}.~The augmented Lagrangian loop is utilised to show the performance of\ \textsc{trap} as a bound-constrained solver, whereas\ \textsc{lancelot} is expected to provide better overall performance.~All results are compared to the solution yielded by the nonlinear interior-point solver\ \textsc{ipopt}\ \citep{waech2006} with the sparse linear solver\ \textsc{ma27}.~Finally, it is important to stress that the results presented in this Section are obtained from a preliminary\ \textsc{matlab} implementation, which is designed to handle small-scale problems.~The design of a fully distributed software would involve substantial development and testing, and is thus beyond the scope of this paper.    

\subsection{AC Optimal Power Flow in Polar Coordinates}
\label{subsec:polar_opf}
We consider the AC-OPF problem in polar coordinates
\begin{align}
\label{eq:polar_opf}
&\minimise\sum_{g\in\Gcal}c_0^{g}+c_1^{g}p_g^{G}+c_2\left(p_g^G\right)^2\\
&\text{s.t.}\nonumber\\
&\sum_{g\in\Gcal_b}p_g^G=\sum_{d\in\Dcal_b}P_d^D+\sum_{b'\in\Bcal_b}p_{bb'}^L+G_b^Bv_b^2\nonumber\\
&\sum_{g\in\Gcal_b}q_g^G=\sum_{d\in\Dcal_b}Q_d^D+\sum_{b'\in\Bcal_b}q_{bb'}^L-B_b^Bv_b^2\nonumber\\
&p_{bb'}^L=G_{bb}v_b^2+\left(G_{bb'}\cos{(\theta_b-\theta_{b'})}+B_{bb'}\sin{(\theta_b-\theta_{b'})}\right)v_bv_{b'}\nonumber\\
&q_{bb'}^L=-B_{bb}v_b^2+\left(G_{bb'}\sin{(\theta_b-\theta_{b'})}-B_{bb'}\cos{(\theta_b-\theta_{b'})}\right)v_bv_{b'}\nonumber\\
&\left(p_{bb'}^L\right)^2+\left(q_{bb'}\right)^2+s_{bb'}=\left(S_{bb'}^M\right)^2\nonumber\\
&v_b^L\leq{}v_b\leq{}v_b^U\nonumber\\
&p^L\leq{}p_g^G\leq{}p^U\nonumber\\
&q^L\leq{}q_g^G\leq{}q^U\nonumber\\
&s_{bb'}\geq0\enspace,\nonumber
\end{align}
\looseness-1which corresponds to the minimisation of the overall generation cost, subject to power balance constraints at every bus~$b$ and power flow constraints on every line~$bb'$ of the network, where~$\Gcal$ denotes the set of generators and~$\Gcal_b$ is the set of generating units connected to bus~$b$.~The variables~$p^G_g$ and~$q^G_g$ are the active and reactive power output at generator~$g$.~The set of loads connected to bus~$b$ is denoted by~$\Dcal_b$.~The parameters~$P^D_d$ and~$Q^D_d$ are the demand active and reactive power at load unit~$d$.~The letter~$\Bcal_b$ represents the set of buses connected to bus~$b$.~Variables~$p_{bb'}^L$ and~$q_{bb'}^L$ are the active and reactive power flow through line~$bb'$.~Variables~$v_b$ and~$\theta_b$ denote the voltage magnitude and voltage angle at bus~$b$.~Constants~$v_b^L$,~$v_b^U$ are lower and upper bounds on the voltage magnitude at bus~$b$.~Constants~$p^L$,~$p^U$,~$q^L$ and~$q^U$ are lower and upper bounds on the active and reactive power generation.~It is worth noting that a slack variable~$s_{bb'}$ has been added at every line~$bb'$ in order to turn the usual inequality constraint on the power flow through line~$bb'$ into an equality constraint.~The derivation of the optimal power flow problem in polar form can be found in\ \citep{zhu2009}.

\looseness-1As a simple numerical test example for\ \textsc{trap}, we consider a particular instance of~NLP\ \eqref{eq:polar_opf} on the~$9$-bus transmission network shown in~Fig.\ \ref{fig:nine_bus}.~As in\ \eqref{eq:opf_al}, the augmented Lagrangian subproblem is obtained by relaxing the equality constraints associated with buses and lines in\ \eqref{eq:polar_opf}.~The bound constraints, which can be easily dealt with via projection, remain unchanged.~One should notice that~NLP\ \eqref{eq:polar_opf} has partially separable constraints and objective, so that\ \textsc{lancelot} could efficiently deal with it, yet in a purely centralised manner.~In some sense, running\ \textsc{trap} in a\ \textsc{lancelot} outer loop can be seen as a first step towards a distributed implementation of\ \textsc{lancelot} for solving the~AC-OPF.~It is worth noting that the dual updates only require exchange of information between neighbouring nodes and lines.~However, each\ \textsc{lancelot} dual update requires a central communication, as the norm of the power flow constraints need to be compared with a running tolerance\ \citep{conn1991}.   
\begin{figure}[h!]
\begin{center}
\includegraphics[scale=0.5]{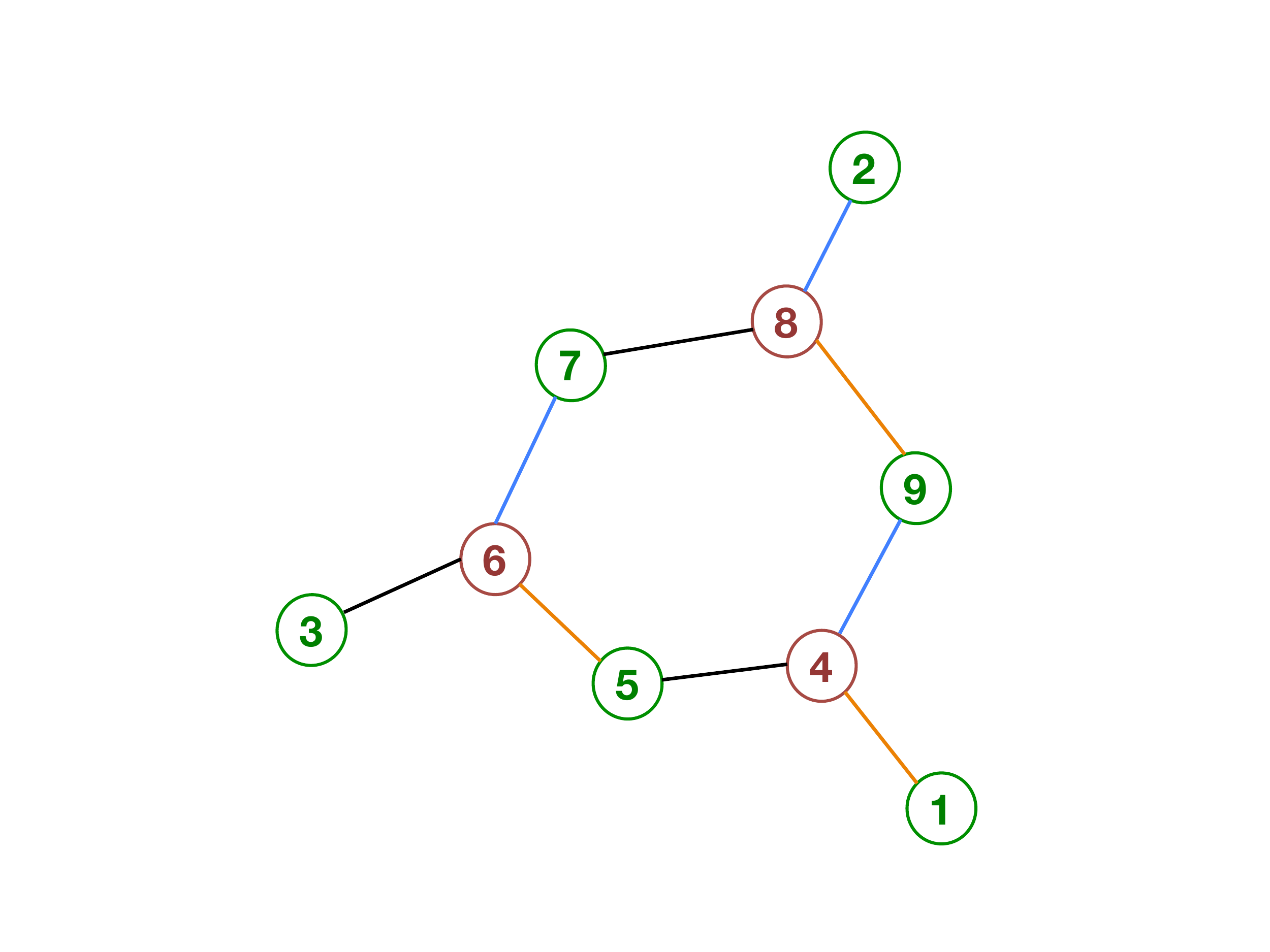}
\end{center}
\caption{\label{fig:nine_bus}The~$9$-bus transmission network from\ \url{http://www.maths.ed.ac.uk/optenergy/LocalOpt/}.}
\end{figure}
\looseness-1For the~$9$-bus example in~Fig.\ \ref{fig:nine_bus}, the~Cauchy search of\ \textsc{trap} on the augmented Lagrangian subproblem\ \eqref{eq:opf_al} can be carried out in five parallel steps.~This can be observed by introducing local variables for every bus~$b\in\left\{1,\ldots,9\right\}$,  
\begin{align}
x_b:=\left(v_b,\theta_b\right)^{\Trans}\enspace,\nonumber
\end{align}
and for every line 
\begin{align}
bb'\in\Big\{\left\{1,4\right\},\left\{4,5\right\},\left\{4,9\right\},\left\{8,9\right\},\left\{2,8\right\},\left\{7,8\right\},\left\{6,7\right\},\left\{3,6\right\},\left\{5,6\right\}\Big\}\enspace,\nonumber
\end{align}
with the line variable~$y_{bb'}$ being defined as
\begin{align}
y_{bb'}:=\left(p_{bb'},q_{bb'},s_{bb'}\right)^{\Trans}\enspace.\nonumber
\end{align}
\looseness-1The line variables~$y_{bb'}$ can be first updated in three parallel steps, which corresponds to
\begin{align}
\left\{y_{\left\{2,8\right\}},y_{\left\{6,7\right\}},y_{\left\{4,9\right\}}\right\},~\left\{y_{\left\{7,8\right\}},y_{\left\{3,6\right\}},y_{\left\{4,5\right\}}\right\},~\left\{y_{\left\{8,9\right\}},y_{\left\{5,6\right\}},y_{\left\{1,4\right\}}\right\}\enspace.\nonumber
\end{align}
Then, the subset 
\begin{align}
\left\{x_1,x_2,x_3,x_5,x_7,x_9\right\}\nonumber
\end{align} 
can be updated, followed by the subset 
\begin{align}
\left\{x_4,x_6,x_8\right\}\enspace.\nonumber
\end{align}
\looseness-1As a result, backtracking iterations can be run in parallel at the nodes associated with each line and bus.~If a standard trust region~Newton method would be applied, the projected search would have to be computed on the same central node without a bound on the number iterations.~Thus, the activity detection phase of\ \textsc{trap} allows one to reduce the number of global communications involved in the whole procedure.~The results obtained via a basic augmented Lagrangian loop and a\ \textsc{lancelot} outer loop are presented in Tables\ \ref{tab:res_9_opf_al} and\ \ref{tab:res_9_opf_lan} below.~The data is taken from the archive\ \url{http://www.maths.ed.ac.uk/optenergy/LocalOpt/}.~In all Tables of this Section, the first column corresponds to the index of the dual iteration, the second column to the number of iterations in the main loop of\ \textsc{trap} at the current outer step, the third column to the total number of~sCG iterations at the current outer step, the fourth column to the level of~KKT satisfaction obtained at each outer iteration, and the fifth column is the two-norm of the power flow equality constraints at a given dual iteration.
\begin{table}[h!]
\begin{center}
\begin{tabular}{|c|c|c|c|c|}
\hline
Outer iter.  &  \# inner it.  & \# cum. sCG &   Inner KKT   &  PF eq. constr. \\
 count  &   &    &     &   \\
\hline
\hline             
       $1$    &       $79$     &  $388$  &      $2.01\cdot10^{-7}$      &  $0.530$\\
       $2$     &      $2$       &  $40$    &      $2.71\cdot10^{-10}$    &   $0.530$\\
       $3$     &      $300$   &  $2215$  &    $2.39\cdot10^{-2}$      &  $0.292$\\
       $4$     &      $101$   &  $2190$  &    $6.50\cdot10^{-4}$      &  $6.56\cdot10^{-3}$ \\
       $5$     &      $123$   &  $2873$  &    $2.10\cdot10^{-3}$      &  $5.02\cdot10^{-6}$  \\
       $6$     &      $56$     &  $1194$  &    $4.14\cdot10^{-2}$      &  $1.11\cdot10^{-10}$  \\
\hline
\end{tabular}
\captionof{table}{\label{tab:res_9_opf_al}Results for the~$9$-bus~AC-OPF (Fig.\ \ref{fig:nine_bus}) using a standard augmented Lagrangian outer loop and\ \textsc{trap} as primal solver.~Note that the cumulative number of~CG iterations is relatively high, since the refinement stage was not preconditioned.}
\end{center}
\end{table}
\begin{table}[h!]
\begin{center}
\begin{tabular}{|c|c|c|c|c|}
\hline
Outer iter.  &  \# inner it.  &  \# cum. sCG  &  Inner KKT   & PF eq. constr. \\
 count  &   &     &     &   \\
\hline
\hline             
       $1$    &       $37$     &  $257$    &  $7.29\cdot10^{-2}$   &  $0.530$\\
       $2$     &      $5$       &   $25$     &  $1.01\cdot10^{-2}$    &   $0.530$\\
       $3$     &      $6$       &   $71$     &  $3.23\cdot10^{-5}$   &  $0.530$\\
       $4$     &      $100$   &   $1330$   &  $8.30\cdot10^{-3}$  &  $4.33\cdot10^{-2}$ \\
       $5$     &      $100$   &   $1239$  &  $1.80\cdot10^{-3}$   &  $2.53\cdot10^{-3}$  \\
       $6$     &      $100$   &   $2269$   &  $4.33\cdot10^{-2}$  &  $2.69\cdot10^{-5}$  \\
       $7$     &      $64$     &   $1541$   &  $3.2\cdot10^{-3}$   &  $1.64\cdot10^{-8}$  \\
\hline
\end{tabular}
\captionof{table}{\label{tab:res_9_opf_lan}Results for the~$9$-bus~AC-OPF~(Fig.\ \ref{fig:nine_bus}) using a\ \textsc{lancelot} outer loop and\ \textsc{trap} as primal solver.~Note that the cumulative number of~CG iterations is relatively high, since no preconditioner was applied in the refinement step.}
\end{center}
\end{table}
\looseness-1To obtain the results presented in Tables\ \ref{tab:res_9_opf_al} and\ \ref{tab:res_9_opf_lan}, the regularisation parameter~$\sigma$ in the refinement stage\ \ref{algo:pcg} is set to~$1\cdot10^{-10}$.~For~Table\ \ref{tab:res_9_opf_al}, the maximum number of iterations in the inner loop (\textsc{trap}) is fixed to~$300$ and the stopping tolerance on the level of satisfaction of the~KKT conditions to~$1\cdot10^{-5}$.~For~Table\ \ref{tab:res_9_opf_lan} (\textsc{lancelot}), the maximum number of inner iterations is set to~$100$ for the same stopping tolerance on the~KKT conditions.~In~Algorithm\ \ref{algo:pcg}, a block-diagonal preconditioner is applied.~It is worth noting that the distributed implementation of~Algorithm\ \ref{algo:pcg} is not affected by such a change.~To obtain the results of Table\ \ref{tab:res_9_opf_al}, the initial penalty parameter~$\varrho$ is set to~$10$ and is multiplied by~$30$ at each outer iteration.~In the\ \textsc{lancelot} loop, it is multiplied by~$100$.~In the end, an objective value of~$2733.55$ up to feasibility~$1.64\cdot10^{-8}$ of the power flow constraints is obtained, whereas the interior-point solver\ \textsc{ipopt}, provided with the same primal-dual initial guess, yields an objective value of~$2733.5$ up to feasibility~$2.23\cdot10^{-11}$.~From~Table\ \ref{tab:res_9_opf_al}, one can observe that a very tight~KKT satisfaction can be obtained with\ \textsc{trap}.~From the figures of~Tables\ \ref{tab:res_9_opf_al} and\ \ref{tab:res_9_opf_lan}, one can extrapolate that\ \textsc{lancelot} would perform better in terms of computational time~($6732$~sCG iterations in total) than a basic augmented Lagrangian outer loop~($8900$~sCG iterations in total), yet with a worse satisfaction of the power flow constraints~($1.64\cdot10^{-8}$ against~$1.11\cdot10^{-10}$).~Finally, one should mention that over a set of hundred random initial guesses,\ \textsc{trap} was able to find a solution satisfying the power flow constraints up to~$1\cdot10^{-7}$ in all cases, whereas\ \textsc{ipopt} failed in approximately half of the test cases, yielding a point of local infeasibility.

\subsection{AC Optimal Power Flow on Distribution Networks}
\label{subsec:dis_opf}
\looseness-1Algorithm\ \ref{algo:dis_tr} is then applied to solve two~AC-OPF problems in rectangular coordinates on distribution networks.~Both~$47$-bus and~$56$-bus networks are taken from\ \citep{topcu2014}.~Our results are compared against the nonlinear interior-point solver\ \textsc{ipopt}\ \citep{waech2006}, which is not amenable to a fully distributed implementation, and the~SOCP relaxation proposed by\ \citep{topcu2014}, which may be distributed (as convex) but fails in some cases, as shown next.~It is worth noting that any distribution network is a tree, so a minimum colouring scheme consists of two colours, resulting in~$4$ parallel steps for the activity detection in\ \textsc{trap}.
\subsubsection{On the~$56$-bus~AC-OPF}
\label{subsubsec:case56}
\looseness-1On the~$56$-bus~AC-OPF, an objective value of~$233.9$ is obtained with feasibility~$8.00\cdot10^{-7}$, whereas the nonlinear solver\ \textsc{ipopt} yields an objective value of~$233.9$ with feasibility~$5.19\cdot10^{-7}$ for the same initial primal-dual guess.

\looseness-1In order to increase the efficiency of\ \textsc{trap}, following a standard recipe, we build a block-diagonal preconditioner from the hessian of the augmented Lagrangian by extracting block-diagonal elements corresponding to buses and lines.~Thus, constructing and using the preconditioner can be done in parallel and does not affect the distributed nature of\ \textsc{trap}.~In Fig.\ \ref{fig:kkt_56}, the satisfaction of the~KKT conditions for the bound constrained problem\ \eqref{eq:opf_al} is plotted for a preconditioned refinement phase and non-preconditioned one.
\begin{figure}[h!]
\begin{center}
\includegraphics[width=0.9\textwidth]{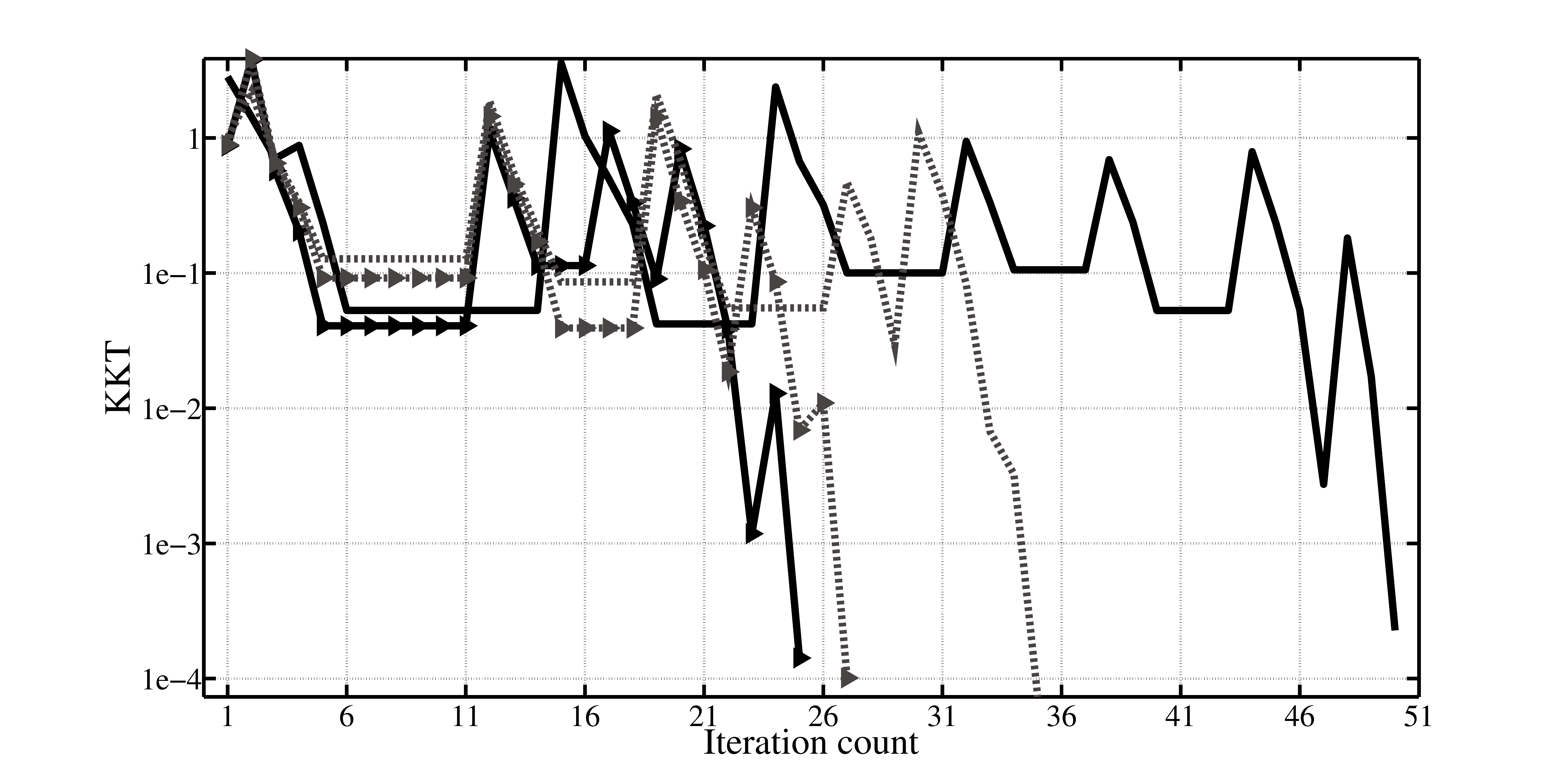}
\end{center}
\caption{\label{fig:kkt_56}KKT satisfaction vs iteration count in the fourth\ \textsc{lancelot} subproblem formed on the~AC-OPF with~$56$ buses.~When using a centralised projected search as activity detector (dotted grey) and\ \textsc{trap} (full black).~Curves obtained with a preconditioned sCG are highlighted with triangle markers.}
\end{figure}
\looseness-1One can conclude from Fig.\ \ref{fig:kkt_56} that preconditioning the refinement phase does not only affect the number of iterations of the sCG Algorithm\ \ref{algo:pcg} (Fig.\ \ref{fig:cg_56}), but also the performance of the main loop of\ \textsc{trap}.~From a distributed perspective, it is very appealing, for it leads to a strong decrease in the overall number of global communications.~Finally, from Fig.\ \ref{fig:kkt_56}, it appears that\ \textsc{trap} and a centralised trust region method (with centralised projected search) are equivalent in terms of convergence speed. 
\begin{figure}[h!]
\begin{center}
\includegraphics[width=0.9\textwidth]{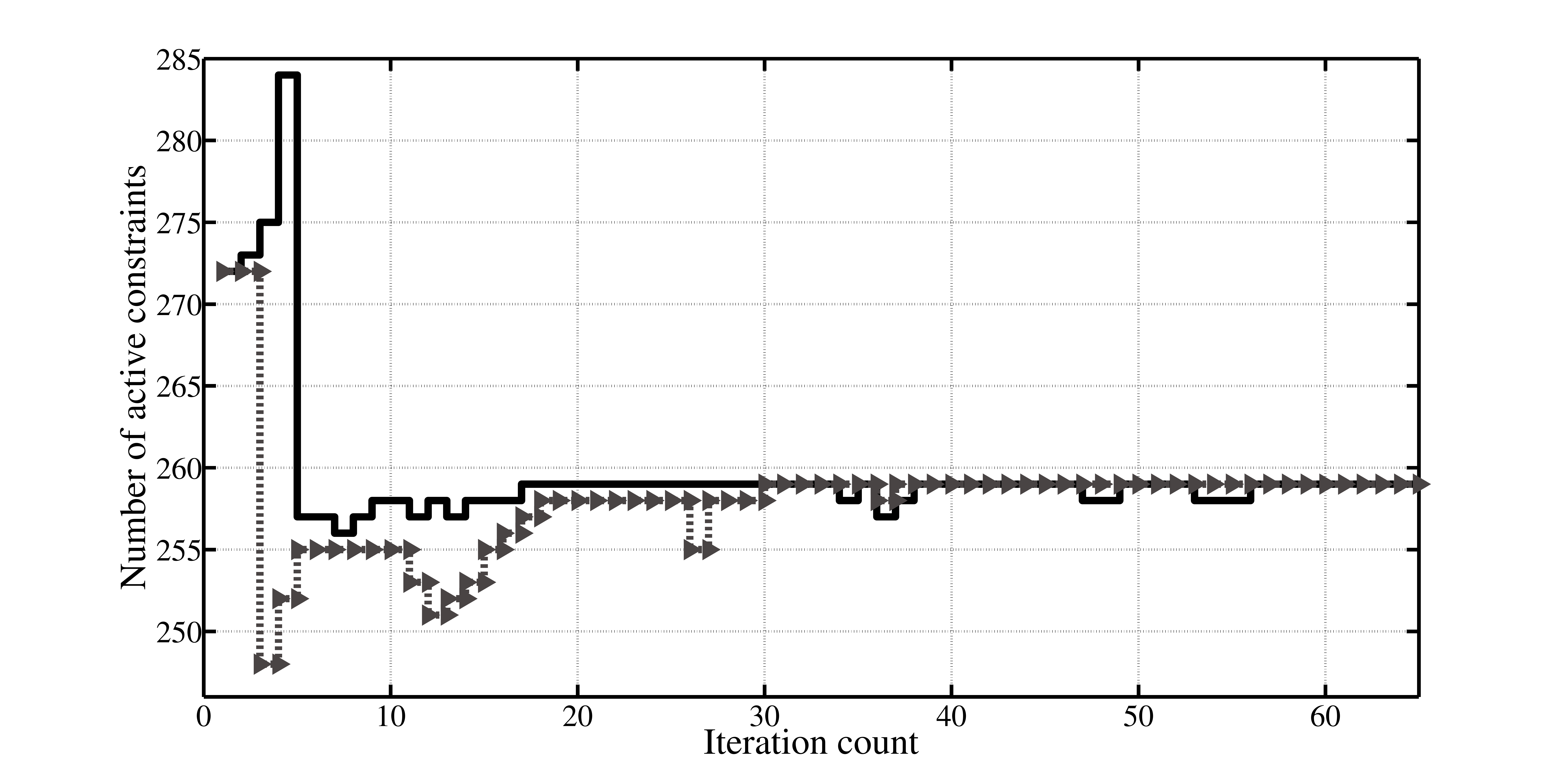}
\end{center}
\caption{\label{fig:as_56}Active-set history in the first\ \textsc{lancelot} iteration for the~$56$-bus~AC-OPF.~Activity detection in\ \textsc{trap}:\ \textsc{trap} (full black), centralised projected search (dashed grey with triangles).}
\end{figure}
\looseness-1From~Fig.\ \ref{fig:as_56},\ \textsc{trap} proves very efficient at identifying the optimal active set in a few iterations (more than~$10$ constraints enter the active-set in the first four iterations and about~$20$ constraints are dropped in the following two iterations), which is a proof of concept for the analysis of~Section\ \ref{sec:cv_ana}.~Alternating gradient projections appear to be as efficient as a projected search for identifying an optimal active-set, although the iterates travel on different faces, as shown in Fig.\ \ref{fig:as_56}.~In Fig.\ \ref{fig:nec_56}, the power flow constraints are evaluated after a run of\ \textsc{trap} on program\ \eqref{eq:opf_al}.~The dual variables and penalty coefficient are updated at each outer iteration.~Overall, the coupling of\ \textsc{trap} with the augmented Lagrangian appears to be successful and provides similar performance to the coupling with a centralised trust region algorithm.
\begin{figure}[h!]
\begin{center}
\includegraphics[width=0.9\textwidth]{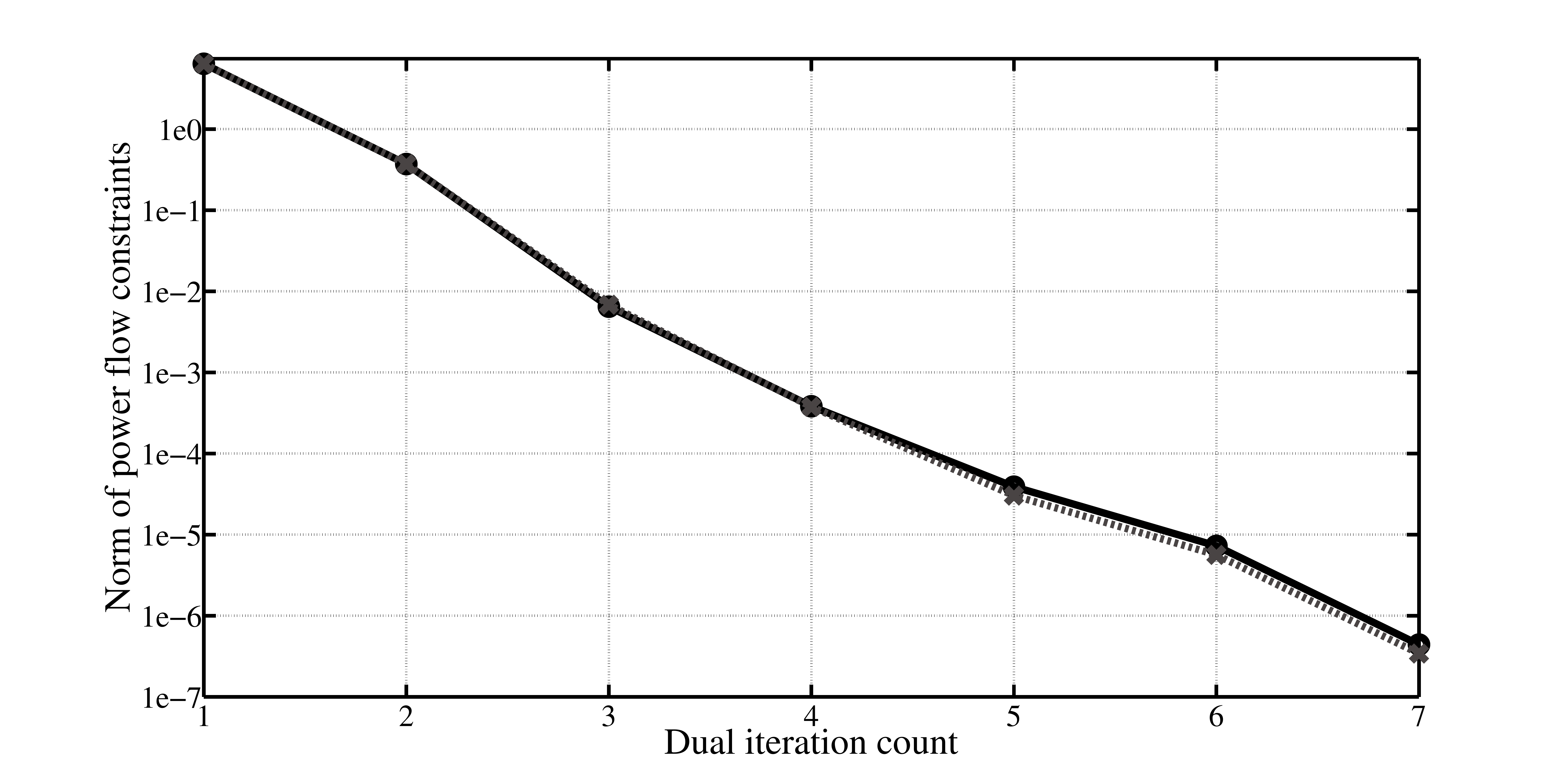}
\end{center}
\caption{\label{fig:nec_56}Norm of power flow constraints on the~$56$-bus network against dual iterations of a\ \textsc{lancelot} outer loop with\ \textsc{trap} as primal solver.~Inner solver:\ \textsc{trap} (full black), centralised trust region method (dashed grey with cross markers).}
\end{figure}
\begin{figure}[h!]
\begin{center}
\includegraphics[width=0.9\textwidth]{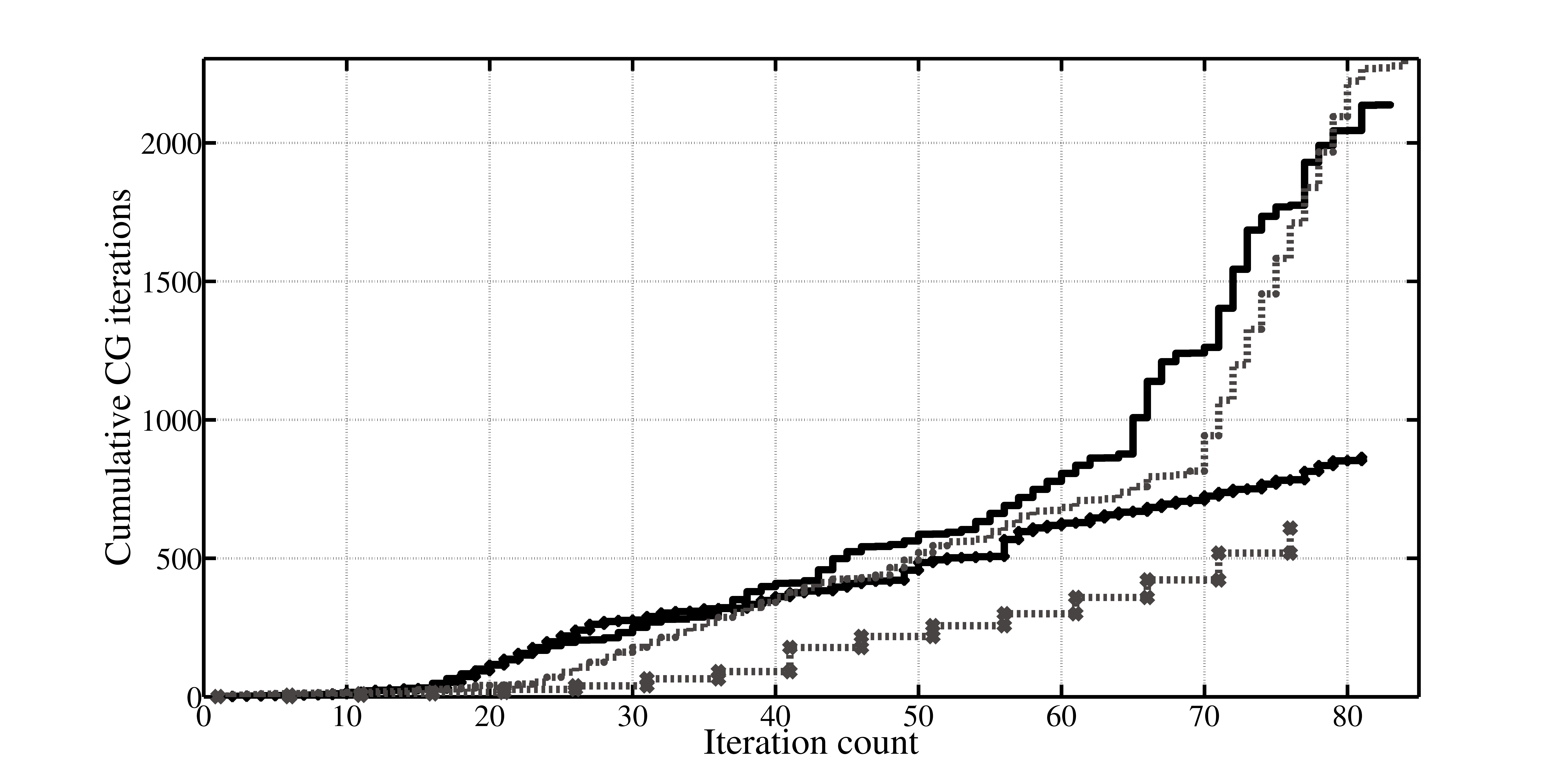}
\end{center}
\caption{\label{fig:cg_56}Cumulative~sCG iterations vs iteration count in the first\ \textsc{lancelot} subproblem formed on the~AC-OPF with~$56$ buses.~Results obtained with\ \textsc{trap} as inner solver (full black), with a centralised trust region method (dashed grey).~Results obtained with a preconditioned refinement stage are highlighted with cross markers.}
\end{figure}
\begin{table}[h!]
\begin{center}
\begin{tabular}{|c|c|c|c|c|}
\hline
Outer iter.  &  \# inner it.  &  \# cum. sCG  &  Inner KKT   & PF eq. constr. \\
 count  &   &     &     &   \\
\hline 
\hline 
       $1$      &     $122$   & $1382$    &  $8.45\cdot10^{-9}$    &  $6.68$\\
       $2$      &     $189$   & $4486$   &  $6.71\cdot10^{-9}$     &  $1.49\cdot10^{-1}$\\
       $3$      &     $139$   & $11865$   &  $9.87\cdot10^{-8}$     &  $8.79\cdot10^{-4}$\\
       $4$      &     $49$     & $3958$    &  $6.75\cdot10^{-6}$  &  $7.92\cdot10^{-6}$\\
       $5$      &     $9$       & $936$  &  $5.45\cdot10^{-7}$  &  $4.58\cdot10^{-9}$ \\    
\hline
\end{tabular}
\captionof{table}{\label{tab:res_56_opf_al}Results for the~$56$-bus~AC-OPF of\ \citep{topcu2014} using a (local) augmented Lagrangian outer loop with\ \textsc{trap} as primal solver.}
\end{center}
\end{table}
\begin{table}[h!]
\begin{center}
\begin{tabular}{|c|c|c|c|c|}
\hline
Outer iter.  &  \# inner it.  &  \# cum. sCG  &  Inner KKT   & PF eq. constr. \\
 count  &   &     &     &   \\
\hline 
\hline 
       $1$      &     $100$   & $924$    &  $9.74\cdot10^{-2}$    &  $6.42$\\
       $2$      &     $133$    &  $3587$   &  $2.40\cdot10^{-3}$     &  $3.60\cdot10^{-1}$\\
       $3$      &    $54$      &  $4531$   &  $1.03\cdot10^{-4}$     &  $4.00\cdot10^{-3}$\\
       $4$      &     $10$      & $858$    &  $4.20\cdot10^{-6}$  &  $1.02\cdot10^{-3}$\\
       $5$      &     $42$     &   $3288$  &  $4.37\cdot10^{-6}$  &  $2.32\cdot10^{-4}$ \\
       $6$      &     $13$     &   $916$  &  $1.82\cdot10^{-5}$    &  $4.35\cdot10^{-5}$ \\
       $7$      &     $40$      &  $6878$   &  $3.70\cdot10^{-7}$   &   $8.16\cdot10^{-6}$ \\
       $8$      &     $6$       &   $420$   &  $4.64\cdot10^{-6}$   &  $4.97\cdot10^{-7}$  \\      
\hline
\end{tabular}
\captionof{table}{\label{tab:res_56_opf_lan}Results for the~$56$-bus~AC-OPF of\ \citep{topcu2014} using a\ \textsc{lancelot} outer loop with\ \textsc{trap} as primal solver.}
\end{center}
\end{table}

\looseness-1Tables\ \ref{tab:res_56_opf_al} and\ \ref{tab:res_56_opf_lan} are obtained with an initial penalty coefficient~$\rho=10$ and a multiplicative coefficient of~$20$. 
 
\subsubsection{On the $47$-bus~AC-OPF}
\label{subsubsec:case47}
\looseness-1On the~$47$-bus~AC-OPF, a generating unit was plugged at node~$12$ (bottom of the tree) and the load at the substation was decreased to~$3$ pu.~On this modified problem, the~SOCP relaxation provides a solution, which does not satisfy the nonlinear equality constraints.~An objective value of~$502.3$ is obtained with feasibility~$2.57\cdot10^{-7}$ for both the~AL loop (Tab.\ \ref{tab:res_47_opf_al}) and the\ \textsc{lancelot} loop (Tab.\ \ref{tab:res_47_opf_lan}).
\begin{table}[h!]
\begin{center}
\begin{tabular}{|c|c|c|c|c|}
\hline
Outer iter.  &  \# inner it.  &  \# cum. sCG  &  Inner KKT   & PF eq. constr. \\
 count  &   &     &     &   \\
\hline   
\hline           
      $1$     &      $275$   & $3267$  &  $1.33\cdot10^{-7}$    &   $5.80$ \\
       $2$    &      $300$   & $7901$   &   $1.39\cdot10^{-1}$  &  $1.12\cdot10^{-1}$ \\
       $3$    &      $180$   & $18725$   &    $2.13\cdot10^{-6}$  &  $9.47\cdot10^{-5}$ \\
       $4$    &      $26$     & $3765$    &  $5.55\cdot10^{-8}$   &  $6.63\cdot10^{-9}$  \\
\hline
\end{tabular}
\captionof{table}{\label{tab:res_47_opf_al}Results for the~$47$-bus~AC-OPF of\ \citep{topcu2014} using an augmented Lagrangian outer loop with\ \textsc{trap} as primal solver.}
\end{center}
\end{table}
\begin{table}[h!]
\begin{center}
\begin{tabular}{|c|c|c|c|c|}
\hline
Outer iter.  &  \# inner it.  &  \# cum. sCG  &  Inner KKT   & PF eq. constr. \\
 count  &   &     &     &   \\
\hline  
\hline          
      $1$     &    $180$   &  $1147$   &  $8.64\cdot10^{-2}$    &   $5.35$ \\
       $2$    &    $300$   &  $7128$   &  $2.23$      &  $3.12\cdot10^{-1}$ \\
       $3$    &    $215$   &  $11304$  & $4.65\cdot10^{-5}$  &  $2.97\cdot10^{-3}$ \\
       $4$    &    $9$       &  $423$   &    $6.05\cdot10^{-5}$   &  $3.28\cdot10^{-5}$  \\
       $5$    &    $8$       &  $503$   &    $1.11\cdot10^{-8}$   &   $7.90\cdot10^{-7}$ \\
       $6$    &    $2$       &  $177$   &    $4.64\cdot10^{-6}$   &   $4.03\cdot10^{-8}$ \\
\hline
\end{tabular}
\captionof{table}{\label{tab:res_47_opf_lan}Results for the~$47$-bus~AC-OPF of\ \citep{topcu2014} using a\ \textsc{lancelot} outer loop with\ \textsc{trap} as primal solver.}
\end{center}
\end{table}
\looseness-1The~SOCP relaxation returns an objective value of~$265.75$, but physically impossible, as the power flow constraints are not satisfied.~The nonlinear solver\ \textsc{ipopt} yields an objective value of~$502.3$ with feasibility~$5.4\cdot10^{-8}$.
\section{Conclusions}
A novel trust region~Newton method, entitled\ \textsc{trap}, which is based on distributed activity detection, has been described and analysed.~In particular, as a result of a proximal regularisation of the trust region problem with respect to the~Cauchy point yielded by an alternating projected gradient sweep, global and fast local convergence to first-order critical points has been proven under standard regularity assumptions.~It has been argued further how the approach can be implemented in distributed platforms.~The proposed strategy has been successfully applied to solve various nonconvex~OPF problems, for which distributed algorithms are currently raising interest.~The performance of the novel activity detection mechanism compares favourably against the standard projected search.
\bibliographystyle{spbasic}	
\bibliography{biblio}	

\end{document}